     \newcommand{\Prob}[1]{\mathbb{P}\left(#1\right)}
     \newcommand{\ceil}[1]{\left\lceil#1\right \rceil}
\newcommand{\threshold}{\theta}
     \newcommand{\abs}[1]{\left|#1\right|}
     \renewcommand{\span}{\tt{span}}
 \newcommand{\pgood}{p_{\text{\tt good}}}
 \newcommand{\pbad}{p_{\text{\tt bad}}}
\def\Z{\mathbb{Z}}
\newcommand{\bZ}{{\mathbb Z}}
\newcommand{\cO}{\mathcal O}
\newcommand{\Span}{\text{\tt Span}}
\numberwithin{equation}{section}
\newtheorem{theorem}{Theorem}[section]
\newtheorem{lemma}[theorem]{Lemma}
\newcommand{\prob}[1]{\mathbb{P}\left(#1\right)}
\newcommand{\probsub}[2]{\mathbb{P}_{#1}\left(#2\right)}
 \newcommand{\note}[1]{{\bf \textcolor{blue}
{[#1\marginpar{\textcolor{red}{***}}]}}}
\begin{document}

\begin{center}\Large
{\bf {Bootstrap percolation on \\products of cycles and complete graphs}}
\end{center}

\begin{center}

{\sc Janko Gravner}\\
{\rm Mathematics Department}\\
{\rm University of California}\\
{\rm Davis, CA 95616, USA}\\
{\rm \tt gravner{@}math.ucdavis.edu}
\end{center}
\begin{center}

{\sc David Sivakoff}\\
{\rm Departments of Statistics and Mathematics}\\
{\rm Ohio State University}\\
{\rm Columbus, OH 43210, USA}\\
{\rm \tt dsivakoff{@}stat.osu.edu}
\end{center} 




\begin{abstract} 
Bootstrap percolation on a graph iteratively enlarges a set of occupied sites by adjoining 
points with at least $\theta$ 
occupied neighbors. The initially occupied
set is random, given by a uniform product measure, and we say that spanning occurs if every point 
eventually becomes occupied. 
The main question concerns the critical probability, that is, the minimal
initial density that makes spanning likely. The graphs we consider are
products of cycles of $m$ points and complete graphs of $n$ points. 
The major part of the paper focuses on the case when two factors are complete graphs and one factor 
is a cycle. We identify the asymptotic behavior of 
the critical probability and show that, when $\theta$ is odd, there are two 
qualitatively distinct phases: the transition from low to high probability of spanning 
as the initial density increases is sharp or gradual, depending on the size of $m$. 
\end{abstract}

\let\thefootnote\relax\footnote{\small {\it AMS 2000 subject classification\/}. 60K35}
\let\thefootnote\relax\footnote{\small {\it Key words and phrases\/}. Bootstrap percolation, critical 
probability, gradual transition, sharp transition.}

\section{Introduction}

Given a graph $G=(V,E)$, {\it bootstrap percolation\/} with {\it threshold\/} $\threshold$
is a discrete-time growth process that, starting from 
an initial configuration $\omega \in\{0,1\}^V$, generates an increasing sequence of configurations 
$\omega=\omega_0,\omega_1,\ldots$. Given $\omega_j$, $j\ge 0$, $\omega_{j+1}$ is given by
$$
\omega_{j+1}(v)=
\begin{cases}
1& \text{if $\omega_j(v)=1$ or $\sum_{w \sim  v}\omega_j(w) \geq
\threshold$}\\
0 &\text{else}
\end{cases}
$$
and $\omega_\infty$ is the pointwise limit of $\omega_j$ as $j\to\infty$.
The initial configuration $\omega$ is random; 
$\{\omega(v): v \in V\}$ is a collection of i.i.d.~Bernoulli random 
variables with parameter $p$. The most natural object of study is the event 
$\Span=\{\omega_\infty\equiv 1\}$ that {\it spanning\/} occurs (in which case we also say that 
the initial configuration $\omega$ {\it spans\/} $V$). 
This process was introduced in \cite{CLR}, and has been widely studied since; 
see \cite{AdL, Hol2}
for readable surveys. 

While some of the earliest results are on infinite lattices \cite{vEn, Sch}, 
many of the most interesting questions are formulated for graphs with finite vertex set
$V$, whose size increases to infinity with an integer parameter $n$. Starting from 
the foundational paper \cite{AiL}, an impressive 
body of work addresses the most natural example of $d$-dimensional 
lattice cube $\bZ_n^d$, with $n^d$ vertices and nearest-neighbor edges \cite{Sch, Hol1, HLR, 
GHM, BB, BBM, BBDM}. More recently, some 
work has been done on the Hamming torus $K_n^d$, which, as the Cartesian product of $d$ 
complete graphs of $n$ vertices, has the same vertex set as the lattice cube, but a much larger 
set of edges, which makes many percolation questions fundamentally different \cite{Siv, GHPS, Sli}.  

When $V$ is finite, $\probsub{p}{\Span}$ is a polynomial in $p$ that increases from $0$ to $1$ 
for $p\in [0,1]$. Therefore, for every $\alpha\in [0,1]$ there exists a 
unique $p_\alpha=p_\alpha(n)$ so that 
$\probsub{p_\alpha}{\Span}=\alpha$. Commonly, $p_{1/2}$ is also called 
the {\it critical probability\/}
and is denoted by $p_c$. In our cases, $\Span$ happens a.~a.~s.~for any $p>0$, which 
results in $p_c\to 0$. 
We say that a {\it sharp transition\/} (for the event $\Span$) occurs if $p_\alpha\sim p_c$
for all $\alpha\in (0,1)$,
as $n\to\infty$, or equivalently, 
$$
\probsub{ap_c}{\Span}\to \begin{cases}
0 &a\in(0,1)\\
1 &a\in (1,\infty)
\end{cases}
$$
Sharp transitions results have been proved in remarkable generality \cite{FK}. 
They hold for bootstrap percolation on 
the lattices $\bZ_n^d$, where much more is proved \cite{Hol1, BB, BBM, BBDM}. 

By contrast, {\it gradual transition\/} occurs if there exists a nondecreasing 
continuous function $\phi$ on $(0,\infty)$ with $\phi(0+)=0$ and $\phi(\infty) = 1$, so that
$$
\probsub{ap_c}{\Span}\to \phi(a)
$$
for all $a\in (0,\infty)$. To date, there has been no general investigation 
of this phenomenon, and it is rigorously established on only a few Hamming tori examples: 
$K_n$ with arbitrary $\theta$ (trivially),
$K_n^2$ with arbitrary $\theta$ \cite{GHPS}, $K_n^d$ with arbitrary $d$ and $\theta=2$ \cite{Sli}, 
and $K_n^3$ with $\theta=3$ \cite{GHPS}. (Due to locality of nucleation events, lattice 
examples with gradual transition are somewhat easier to study \cite{GG}.)
Clearly, sharp and gradual transitions are not the only 
possibilities, and indeed we exhibit examples where neither happens; see Theorem~\ref{mixed-thm}. 

\begin{figure}[ht]\label{graph-fig}
\begin{center}
\includegraphics[width=.6\textwidth]{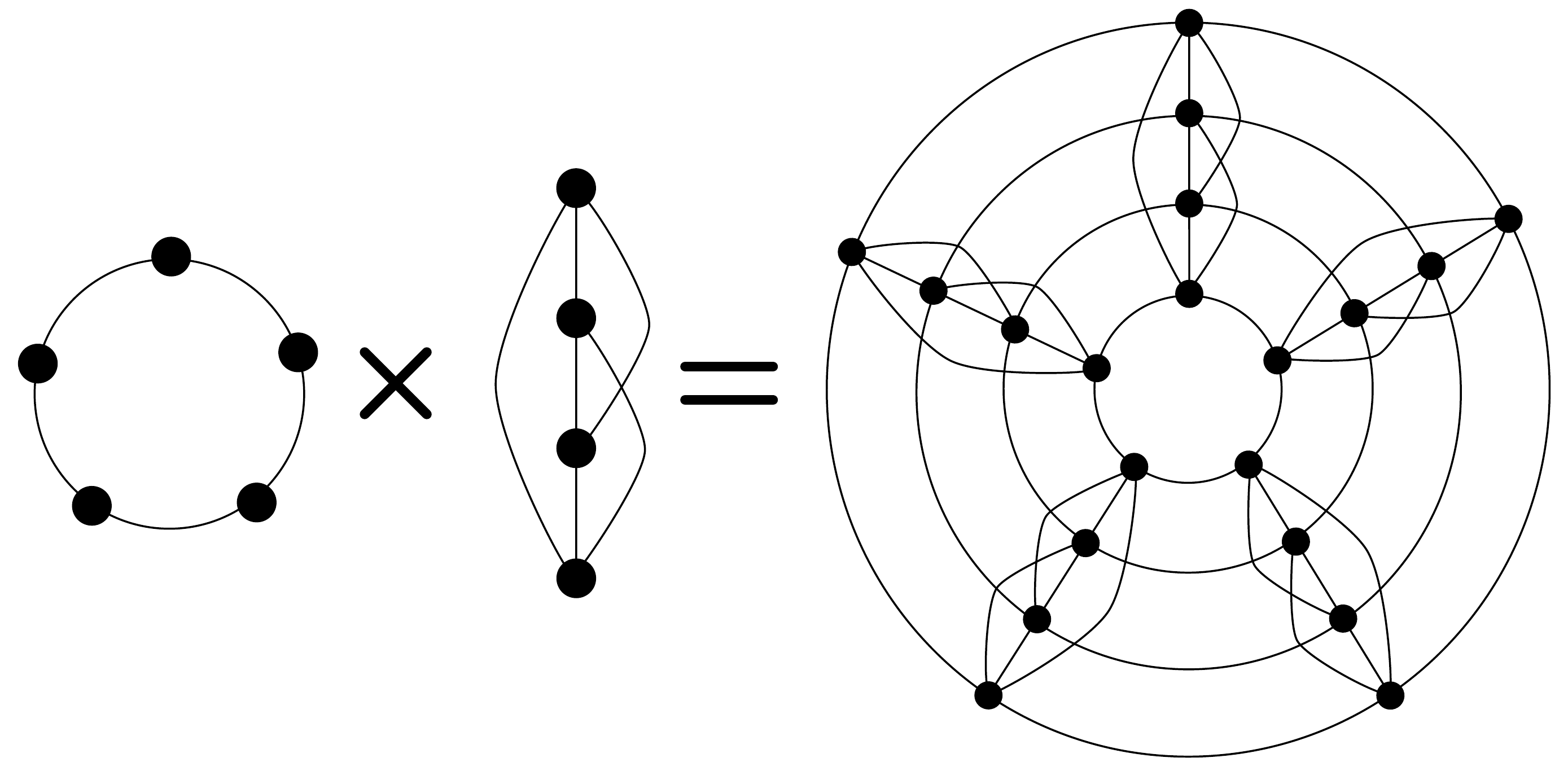}
\end{center}
\caption{Cartesian product of $\bZ_5$ and $K_4$.}
\end{figure}

In this paper, we initiate a study of 
bootstrap percolation on graphs of the form 
$V = (\bZ_m)^{d_1} \times (K_n)^{d_2}$ where $d_1,d_2\geq 1$ are integer parameters, 
$\bZ_m$ denotes the cycle on $m$ vertices, $K_n$ denotes the 
complete graph on $n$ vertices, and $\times$ denotes the Cartesian product (see Figure~\ref{graph-fig} for an example). We will assume throughout that
\begin{equation}\label{m-scaling}
\log m \sim \gamma \log n
\end{equation}
for some constant $\gamma>0$. 
Bootstrap percolation on these graphs may 
be viewed as an extreme case of anisotropic bootstrap percolation, 
where the neighborhood in some directions ($d_1$ of them) is nearest-neighbor, 
but in other directions ($d_2$ of them) the neighborhood extends as far as possible.
The graphs under study could be viewed as limiting cases when the
Holroyd-Liggett-Romik model \cite{HLR} is combined with 
anisotropic graphs studied recently in \cite{DE}. 

We are able to prove a general result for $V = \bZ_m^{d} \times K_n$, which turns out to be mostly  
an application of bootstrap percolation results on lattices \cite{Hol1, BBM, BBDM}. 
Denote by $\log_{(k)}$ the $k$th iterate of $\log$ and let $\lambda(d,\theta)$ be the 
bootstrap percolation scaling 
constant for the lattice $\bZ_m^d$ defined in \cite{BBDM}. 

\begin{theorem} \label{K-theorem} Assume bootstrap percolation on $V = \bZ_m^{d} \times K_n$
for any $d\ge 1$ and $\theta\ge 2$. Then, for $\theta\le d$,
$$
p_c\sim \frac1n \left(\frac{\lambda(d,\theta)}{\log_{(\theta-1)} m}\right)^{d-\theta+1}.
$$
For $\theta>d$,
$$
p_c\sim \frac{d}{2^{\max(2d+1-\theta,0)}}\cdot \frac{\log m}{n} .
$$
In all cases, the transition is sharp. 
\end{theorem}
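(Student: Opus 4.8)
The plan is to collapse each complete‑graph fibre $\{x\}\times K_n$ over a vertex $x\in\bZ_m^d$ to a single site and to read off the induced dynamics on $\bZ_m^d$. Write $N_x$ for the number of initially occupied sites in the fibre over $x$; the $N_x$ are i.i.d.\ binomial$(n,p)$, and call $x$ \emph{touched} when $N_x\ge1$, so the touched set $\cT$ is i.i.d.\ site percolation on $\bZ_m^d$ of density $q:=1-(1-p)^n$ (and $q\sim pn$ once $pn\to0$). Two facts drive the whole argument. \emph{(Projection.)} By induction on the dynamics, every site of $V$ that ever becomes occupied lies in a fibre over a point of $[\cT]_\theta$, the $\theta$‑bootstrap closure of $\cT$ in $\bZ_m^d$: a newly occupied $(x,j)$ either has an occupied same‑fibre neighbour $(x,j')$, whence $x\in[\cT]_\theta$ by induction, or takes all $\ge\theta$ of its occupied neighbours from distinct neighbouring fibres, whence $x$ acquires $\ge\theta$ neighbours already in $[\cT]_\theta$. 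So $V$ can span only if $[\cT]_\theta=\bZ_m^d$. \emph{(Filling.)} Conversely, a fibre with $\ge\theta$ occupied sites fills completely in one step, and more generally, once some neighbouring fibres are completely filled, the fibre over $x$ fills as soon as $N_x+\#\{\text{completely filled neighbours of }x\}\ge\theta$; moreover two touched neighbouring fibres fill each other once each additionally has $\theta-2$ completely filled neighbours, since each supplies to the other an occupied site at its own height — a local coincidence rule whose failure (equal occupied heights) costs only $O(1/n)$.

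\textbf{Lower bound (all $\theta$).} By Projection, $p_c(V)$ is at least the density at which $[\cT]_\theta=\bZ_m^d$ becomes likely. For $2\le\theta\le d$, \cite{Hol1,BBM,BBDM} place this threshold at $q\sim\bigl(\lambda(d,\theta)/\log_{(\theta-1)}m\bigr)^{d-\theta+1}$ and \cite{FK} shows it is sharp; since $q\to0$ there we may use $q\sim pn$, giving the stated lower bound together with one half of the sharp transition. For $\theta>d$ the event is combinatorial: $[\cT]_\theta=\bZ_m^d$ fails precisely when the empty fibres contain a nonempty set $C$ with $\deg_C(x)\ge s:=\max(2d+1-\theta,0)$ for every $x\in C$, and since a $k$‑vertex subset of $\bZ^d$ has at most $\tfrac12 k\log_2 k$ internal edges while such a $C$ has at least $\tfrac12 s|C|$, the minimal instance of $C$ is an $s$‑dimensional subcube of side two, consisting of $2^s$ fibres. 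An all‑empty such subcube occurs with probability of order $(1-p)^{2^sn}$, and a first/second moment argument (these subcubes are only locally dependent) locates its threshold at $2^spn\sim d\log m$, i.e.\ $p\sim\frac{d}{2^s}\cdot\frac{\log m}{n}$; larger candidate obstructions are exponentially rarer. This yields $p_c(V)\ge(1-o(1))\frac{d}{2^s}\cdot\frac{\log m}{n}$ and the easy half of sharpness for $\theta>d$.

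\textbf{Upper bound, $\theta>d$.} Take $p=(1+\varepsilon)\frac{d}{2^s}\cdot\frac{\log m}{n}$ and retain only the monotone subprocess in which the fibre over $x$ fills by the rule $N_x+\#\{\text{filled neighbours}\}\ge\theta$, started from $\{x:N_x\ge\theta\}$. If its final unfilled set $B$ is nonempty, take a connected component $B_0$; then $\deg_{B_0}(x)\ge s+N_x$ for every $x\in B_0$, so $B_0$ is a connected $s$‑core and $\Prob{B_0\text{ stays unfilled}}\le\prod_{x\in B_0}\Prob{N_x\le\deg_{B_0}(x)-s}\le\Prob{N_x\le\theta-1}^{|B_0|}$. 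Since $\Prob{N_x\le\theta-1}\le C m^{-c}(\log m)^{\theta-1}$ with $c:=pn/\log m=(1+\varepsilon)\tfrac{d}{2^s}>\tfrac{d}{2^s}$, and the number of connected $k$‑vertex sets through a fixed vertex is at most $D^k$ for a constant $D=D(d)$, the union bound over the $m^d$ roots and over $k\ge2^s$ (the minimal $s$‑core size) is $O\bigl(m^{\,d-c2^s}(\log m)^{O(1)}\bigr)=O\bigl(m^{-\varepsilon d}(\log m)^{O(1)}\bigr)\to0$. Hence $B=\varnothing$ a.a.s.\ and $V$ spans, which together with the lower bound also settles the sharp transition for $\theta>d$.

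\textbf{Upper bound, $2\le\theta\le d$ (the main work).} Here $q>(1+\varepsilon)\bigl(\lambda(d,\theta)/\log_{(\theta-1)}m\bigr)^{d-\theta+1}$, we want $V$ to span a.a.s., and the crude subprocess above is now too weak; the plan is to run the droplet scheme behind the lattice bounds \cite{Hol1,BBDM} at the level of completely filled fibres. The point to establish is that these obey the same recursion as ordinary $\theta$‑bootstrap on $\cT$: a filled box in $\bZ_m^d$ grows by successively conquering codimension‑one slabs, and because a fibre in such a slab receives one occupied site at every height from the box, conquering the slab reduces, inside it, to a $(\theta-1)$‑problem with touched fibres playing the role of occupied sites; iterating, one descends to pure connectivity in codimension $\theta-1$ (a slab containing any touched fibre fills), and at each intermediate level the pair‑nucleations that the $(\theta-k)$‑subproblem needs are exactly what the height‑coincidence rule of the first paragraph provides. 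Granting the recursion, it remains to show (i) that a filled box of the critical side $\ell_c$ — the appropriate iterate of $\log m$ — forms somewhere a.a.s., by running the same recursion upward from clusters of touched fibres, and (ii) that such a box then fills all of $\bZ_m^d$; both follow from the lattice statements once one uses the robustness of the lattice threshold under $o(1)$ perturbations of the density and under enlarging boxes by bounded factors, which absorbs the discrepancies. The main obstacle is exactly this verification: the native seeds $\{N_x\ge\theta-k\}$ of the $V$‑subproblems are sparser than the occupied set of standard bootstrap, so one must check that the coincidence nucleations compensate at every level of the recursion, and that the exceptional fibres where a coincidence fails, being of density $O(1/n)$, do not obstruct the growth. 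Assembling the four steps gives $p_c(V)\sim p_c(\bZ_m^d,\theta)/n\sim\frac1n\bigl(\lambda(d,\theta)/\log_{(\theta-1)}m\bigr)^{d-\theta+1}$ and, via \cite{FK}, sharpness of the transition in all cases.
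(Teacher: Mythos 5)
Your skeleton matches the paper's: project the $V$‑dynamics onto $\bZ_m^d$ through the grey/touched fibres, get the lower bound from the lattice threshold of \cite{BBDM} via $q\sim np$, and for $\theta>d$ identify the minimal blocking structure as an initially empty $(2d+1-\theta)$‑dimensional two‑sided subcube of $2^{2d+1-\theta}$ fibres. Your $\theta>d$ argument is correct, but it routes through a nontrivial external fact: you appeal to the edge‑isoperimetric bound "a $k$‑vertex subset of $\bZ^d$ has at most $\tfrac{k}{2}\log_2 k$ internal edges" to conclude that a set of minimum internal degree $s$ has size $\ge 2^s$. That inequality is true but is precisely the hard content here, and you neither prove nor attribute it; the paper instead proves the needed consequence directly by a short induction on $d$ (Lemma~\ref{small-lattice-set}: every $A\subset\bZ^d$ with $|A|<2^{2d+1-\theta}$ has a vertex with $\ge\theta$ neighbours in $A^c$), which is self‑contained.

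The genuine gap is the upper bound for $\theta\le d$, and you flag it yourself ("the main obstacle is exactly this verification"). The worry you raise — that the pair‑nucleation rule fails on a height coincidence with probability $O(1/n)$, and that one must check this does not derail the recursion — is real within your framing but is sidestepped entirely by the paper's key local observation, which you should state and use: if $z$ has $\theta-1$ \emph{black} neighbours (fibres with $\ge\theta$ initially occupied sites, hence eventually completely filled) and one additional \emph{grey} point in its closed neighbourhood, then $z$ eventually becomes black. Once the $\theta-1$ black neighbours have filled, every height $j$ in the fibre over $z$ already has $\theta-1$ occupied neighbours; a single grey point at any height anywhere in the closed neighbourhood then propagates to fill the whole fibre, with no coincidence requirement at all. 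Combined with equation~(\ref{K-thm-eq1}) (black density is a fixed power of grey density), this is what lets the \cite{BBM,BBDM} droplet recursion run with black seeds and grey workhorses and inherit the grey threshold. Your sketch is aiming at the same reduction but, without this observation, the "coincidence nucleations compensate at every level" step is exactly the part that would need a new argument.
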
 

The above theorem, whose proof is relegated to Section~\ref{K-section}, 
demonstrates that there is no transition in behaviors for different 
$\gamma$, therefore this case is not of primary interest and we include it mainly for comparison.
Indeed, we will see that the 
situation is very different when the complete graph $K_n$ is replaced by 
the two dimensional Hamming torus $K_n^2$. In this paper we focus on $\bZ_m \times K_n^2$, as
the simplest case that exhibits sharp, gradual and hybrid phase transitions, depending on the relative scaling for $m$ and $n$. We will address the more demanding case $\bZ_m^{d_1} \times K_n^2$ in a subsequent paper, although some of the aforementioned phenomena appear to be limited to the case $d_1=1$.  Higher dimensional Hamming tori $K_n^{d_2}$, $d_2>2$, are much more complex \cite{GHPS, Sli}. We now state our main results.

\begin{theorem} \label{K2-theorem} Assume bootstrap percolation on $\bZ_m \times K_n^2$ and
$\theta\ge 2$. 
\begin{itemize}
\item If $\theta=2\ell+1$ and $\gamma>1/\ell$, then  
\begin{equation}\label{K2-theorem-eq1}
p_c\sim\left({\textstyle\frac12}(\gamma + 1/\ell) \ell! \right)^{1/\ell}\cdot 
\frac{(\log n)^{1/\ell}}{n^{1+1/\ell}},
\end{equation}
with sharp transition.
\item If $\theta=2\ell+1$ and  $\gamma<1/\ell$, then 
\begin{equation}\label{K2-theorem-eq2}
p_c\sim\left( {\textstyle\frac12}\,(\ell+1)!\,{\log 2}\right)^{1/(\ell+1)}\cdot 
\frac{1}{n^{1+1/(\ell+1)}m^{1/(\ell+1)}},
\end{equation}
with gradual transition.
\item If $\theta=2\ell$, then 
\begin{equation}\label{K2-theorem-eq3}
p_c\sim
\begin{cases}
\displaystyle{
\left({\textstyle\frac14}\,\gamma\,\ell!\right)^{1/\ell}\cdot  
\frac{(\log n)^{1/\ell}}{n^{1+1/\ell}}
}
 &\theta\ge 4\\
\displaystyle{{\textstyle\frac 12} \gamma \cdot  
\frac{\log n}{n^{2}}
}&\theta=2,\\
\end{cases}
\end{equation}
with sharp transition. 
\end{itemize}
\end{theorem}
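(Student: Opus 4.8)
The plan is to analyze the growth dynamics on $\bZ_m \times K_n^2$ by tracking, for each value $i \in \bZ_m$ of the cyclic coordinate, the occupied set in the corresponding $K_n^2$ slab. Within a single slab, the relevant geometry is that of a bipartite-like "rook's graph": a site $(a,b)$ becomes occupied if it has enough occupied neighbors among its row $\{(a,\cdot)\}$, its column $\{(\cdot,b)\}$, and the two neighboring slabs $i\pm 1$. The first step is to understand \emph{seeding}: what local configurations of occupied sites, together with help from neighboring slabs, can trigger a slab to fill almost entirely. As in the analysis of $K_n^2$ in \cite{GHPS}, a slab with a bounded "excess" of occupied rows and columns over a critical threshold will internally span; the combinatorial bookkeeping here counts the number of occupied rows plus columns needed, which is governed by $\theta$ and by how many of the $\theta$ needed neighbors can be supplied for free by the two adjacent slabs (this is where the parity of $\theta$ enters, since an odd $\theta$ splits asymmetrically as $\ell$ from the $K_n^2$ directions plus a contribution from the $\bZ_m$ direction).

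The second step is to set up a \textbf{one-dimensional renormalized process along $\bZ_m$}. Declare a slab "active" (or use a small number of internal states recording how many extra occupied rows/columns it carries) once it has accumulated enough occupied structure — either from its own initial configuration or by receiving a push from an already-filled neighbor. Growth then propagates along the cycle: a run of consecutive slabs fills up provided somewhere in it there is enough initial density to nucleate, and provided the "cost" of advancing from one slab to the next (i.e., the probability that a slab has the few extra occupied rows/columns needed to continue the cascade, given $\theta - (\text{help from one filled neighbor})$ threshold) is not too small. Spanning of the whole graph occurs iff the entire cycle of $m$ slabs is eventually swept, which happens iff there is no "blocking gap" — a maximal run of slabs none of which can be advanced even with one filled neighbor. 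The critical probability is then determined by balancing the nucleation probability (which scales like a power of $p n^2$ within a slab, up to logarithmic corrections from the number of candidate rows/columns, $\sim n$) against the number of slabs $m$ and the per-slab advancement probability.

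The third step is the \textbf{asymptotic optimization}. One writes the probability of spanning as (roughly) a union bound / second-moment computation over the location of a nucleus and the event that it spreads around the entire cycle. The dominant term is $\exp\big(\log m + \log(\text{number of nucleus sites}) - (\text{rate})\big)$-type, and $p_c$ is where the exponent vanishes. When $\gamma$ is large (so $m$ is large relative to $n$), the cheapest strategy is to nucleate \emph{within a single slab} using $\ell$ (or $\ell+1$, or $\ell-1$ depending on parity) excess rows/columns, with the $\bZ_m$-direction contributing at most one neighbor's worth of help; this gives $p_c \asymp (\log n)^{1/\ell} n^{-1-1/\ell}$ and a sharp transition because the number of viable nucleation sites, $\sim m n^{O(1)}$, overwhelms fluctuations. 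When $\gamma$ is small (Case $\theta = 2\ell+1$, $\gamma < 1/\ell$), it is instead cheaper to use \emph{both} $\bZ_m$-neighbors, reducing the within-slab requirement by one unit but forcing a slightly denser global configuration spread over the $m$ slabs; the count of nucleation configurations is now $\Theta(1)$-many in a way that makes the limiting spanning probability a nontrivial function $\phi(a)$ — a Poissonization of the number of successful nuclei — hence gradual. Matching the constants in \eqref{K2-theorem-eq1}–\eqref{K2-theorem-eq3} requires carefully counting the leading combinatorial factor: the $\ell!$ (resp. $(\ell+1)!$) comes from ordering the $\ell$ (resp. $\ell+1$) "help" events, the $\tfrac12$ or $\tfrac14$ from the two orientations (rows vs. columns) and the two directions along $\bZ_m$, and the $\log 2$ in \eqref{K2-theorem-eq2} from the probability that a gap of a given length along the cycle gets bridged.

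The main obstacle I expect is the \textbf{lower bound (hard direction) in the gradual-transition regime}: showing that when $p = a\,p_c$ with $a$ fixed, spanning fails with the correct limiting probability $1 - \phi(a)$. This requires a sharp understanding of \emph{all} ways a slab can be advanced and a proof that no cheaper, more "distributed" nucleation mechanism exists — essentially a multi-slab isoperimetric/variational argument showing that the single-slab-plus-two-neighbors strategy is optimal up to lower-order terms, plus control of the dependence between the cascade event and the initial configuration (a second-moment or martingale argument to show the number of successful nuclei is asymptotically Poisson). The sharp-transition cases are more forgiving, since there one only needs matching first-order exponents, but even there the $\theta = 2$ special case in \eqref{K2-theorem-eq3} must be handled separately because the within-slab requirement degenerates ($\ell = 1$ with no "excess" slack), making the dynamics closer to ordinary $1$-neighbor spreading along $\bZ_m$ seeded by occupied rows/columns in $K_n^2$.
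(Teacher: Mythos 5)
Your high-level strategy is aligned with the paper's: decompose per-slab, study when a single copy of $K_n^2$ is (nearly) internally spanned, and then run a one-dimensional argument along $\bZ_m$ in which a nucleus (a $\theta$-IS plane) propagates through a run of $(\theta-1)$-IS planes until it hits a blocking structure. The balance you describe between the nucleation probability, the per-slab advancement probability, and the cycle length $m$ is the correct mechanism, and your reading of the combinatorial origin of $\ell!$, $(\ell+1)!$, and the $\tfrac12$/$\tfrac14$ row-versus-column factors is on target.

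However, there are two concrete gaps. First, and most importantly, you do not identify the key technical estimate that makes the constants exact. After conditioning on the absence of a dense horizontal line, most horizontal lines carry exactly one occupied site whose position is uniform, and the probability that a vertical line acquires $\ell$ occupied sites becomes an $\ell$-coincidence birthday problem with $\sim n\cdot np$ ``people'' and $n$ ``days.'' Here $m^{\ell}/n^{\ell-1}\asymp\log n$, a ``slightly supercritical'' regime in which the standard Poisson-approximation error bounds (Chen--Stein) are too weak to give an asymptotic equivalence rather than just the right order. The paper replaces Poisson approximation by a singularity-analysis estimate for coefficients of large powers of $e_k(z)=\sum_{i<k}z^i/i!$ (Theorem~\ref{Gar_Thm} and Lemma~\ref{bday-thm}). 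Without some substitute for this step, your ``Poissonization'' sketch only produces upper and lower bounds of the right order, not the sharp constants in \eqref{K2-theorem-eq1}--\eqref{K2-theorem-eq3}.

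Second, a factual error: the $\log 2$ in \eqref{K2-theorem-eq2} does not come from ``the probability that a gap of a given length gets bridged.'' In the gradual regime one proves $\probsub{p}{\Span}\to 1-\exp(-2a^{\ell+1}/(\ell+1)!)$ by Poisson convergence for the number of $\theta$-IS planes (Lemmas~\ref{single-plane-gradual} and~\ref{span-grad}); the $\log 2$ then enters only because $p_c$ is defined as $p_{1/2}$, i.e.\ one solves $1-\exp(-2a^{\ell+1}/(\ell+1)!)=1/2$. Relatedly, the ``multi-slab isoperimetric/variational'' lower bound you anticipate is heavier machinery than what is actually used: the paper's necessary condition is a direct construction of a blocking interval (a pair of consecutive $(\theta-1)$-II planes, plus absence of local cross-plane interference), handled by a conditioning/FKG argument rather than a variational principle. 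You would also need to flag that $\theta=2$ and $\theta=3$ require separate treatment, since the generic single-plane estimates (Lemmas~\ref{single-plane-even-IS}, \ref{single-plane-odd-IS}) degenerate there.
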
 

Therefore, $m\approx n^{1/\ell}$ marks the boundary between sharp and gradual transition 
in case of odd threshold $\theta$, while there is no such boundary when the threshold is even.  
In fact, the odd threshold case has, when $\gamma > 1/\ell$,
another sharp transition. To describe it, call (random) sets $A\subset V$ 
{\it abundant\/} (resp., {\it scarce\/}) if $|A|/|V|\to 1$ (resp., $|A|/|V|\to 0$), in probability,
as $n\to\infty$. Henceforth, 
we make the customary identification of 
the configuration $\omega_\infty$ with the occupied set $\{v:\omega_\infty(v)=1\}$.  

\begin{theorem}\label{almost-span-thm}
Assume bootstrap percolation on $\bZ_m \times K_n^2$, and suppose that 
$\theta = 2\ell + 1$ and $\gamma > 1/\ell$. Assume that 
\begin{equation}\label{p-intro}
p \sim a \cdot \frac{(\log n)^{1/\ell}}{n^{1+1/\ell}}.
\end{equation}
Then $\omega_\infty$ is scarce when $a^\ell/\ell!<1/\ell$ and abundant when $a^\ell/\ell!>1/\ell$.
\end{theorem}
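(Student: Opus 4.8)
The plan is to read off the dichotomy from the nucleation‑and‑growth analysis underlying Theorem~\ref{K2-theorem} in the regime $\theta=2\ell+1$, $\gamma>1/\ell$, by identifying $a^\ell/\ell!=1/\ell$ as the threshold at which the growth process seeded by a typical local configuration turns from subcritical to supercritical. Set $a_*:=((\ell-1)!)^{1/\ell}$, so that the claim is: $\omega_\infty$ is scarce for $a<a_*$ and abundant for $a>a_*$. The spanning constant $a_s:=\bigl(\tfrac12(\gamma+1/\ell)\,\ell!\bigr)^{1/\ell}$ of \eqref{K2-theorem-eq1} satisfies $a_s>a_*$ precisely because $\gamma>1/\ell$; hence for $a>a_s$ abundance is immediate from spanning (Theorem~\ref{K2-theorem}), and only the window $a_*<a<a_s$ — abundance without spanning — requires a genuinely new argument. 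Throughout I will use that in this regime the natural ``rate'' is $\tfrac{a^\ell}{\ell!}\log n\,(1+o(1))$: this is, to leading order, the expected number of initially occupied igniting configurations in one $K_n^2$‑layer (heuristically, a column of the layer carrying $\ell$ initial occupied sites, of which a layer has $\sim n\binom n\ell p^\ell=\tfrac{a^\ell}{\ell!}\log n$), and $\tfrac{a^\ell}{\ell!}=\tfrac1\ell$ is exactly the point at which the region grown from one such configuration (using help from the $\bZ_m$‑direction) has positive chance of not stalling.

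For the scarce direction ($a<a_*$) I would show $|\omega_\infty\setminus\omega_0|=o(mn^2)$ a.a.s.; since $\mathbb{E}|\omega_0|=mn^2p=\Theta\bigl(m(\log n)^{1/\ell}n^{1-1/\ell}\bigr)=o(mn^2)$, this yields scarcity. The first ingredient is a deterministic statement from the machinery: there is a scale $K=K(n)=n^{o(1)}$ so that, unless $\omega_0$ contains a \emph{nucleus} — a subconfiguration spread over $O(K)$ consecutive layers that under the dynamics fills all of their $K$‑blocks — every connected component of $\omega_\infty\setminus\omega_0$ has size $O(K)$, whence $|\omega_\infty\setminus\omega_0|=O(K|\omega_0|)=o(mn^2)$. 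The second ingredient is the probabilistic estimate that a.a.s.\ $\omega_0$ contains no nucleus when $a<a_*$: by the combinatorial classification of minimal nuclei — each an igniting configuration together with enough further igniting configurations in its $\bZ_m$‑neighbourhood to keep its growth alive — the expected number of nuclei in $\bZ_m\times K_n^2$ is $n^{o(1)}$ times a power of $n$ which is negative exactly when $a^\ell/\ell!<1/\ell$; Markov's inequality then concludes.

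For the abundant direction ($a>a_*$) fix $\epsilon>0$; it suffices to prove $\mathbb{P}\bigl(\text{layer }z\text{ has}\le(1-\epsilon)n^2\text{ sites in }\omega_\infty\bigr)\to0$ uniformly in $z$, since then $\mathbb{E}\,|V\setminus\omega_\infty|\le\epsilon mn^2+o(mn^2)$ and, $\epsilon$ being arbitrary, Markov's inequality for $|V\setminus\omega_\infty|$ gives $|\omega_\infty|=(1-o(1))mn^2$ a.a.s. For fixed $z$ I would invoke the supercritical half of the machinery: when $a^\ell/\ell!>1/\ell$, the region grown from an igniting configuration a.a.s.\ contains a $(1-\epsilon)$‑fraction of its own layer and of a block of $O(\mathrm{polylog}\,n)$ surrounding layers, so it is enough to find such a growing configuration within a window $W$ of $L=L(n)=n^{o(1)}$ layers around $z$. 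Tessellating $\bZ_m$ into length‑$L$ windows (independent of one another), a second‑moment computation shows each window contains a ``successful'' igniting configuration with probability $\ge1-\exp(-\Omega(L\log n))\to1$, and the growth lemma transfers this to the desired near‑occupation of layer $z$. The point distinguishing abundance from spanning is that $L$ may be taken polylogarithmic: abundance only needs the windowed probability to beat $\log L=o(\log n)$, whereas spanning needs a successful configuration attached to each of the $\asymp m=n^{\gamma+o(1)}$ layers, incurring the full $\log m=\gamma\log n$ — the origin of the extra $\tfrac\gamma2$ in $a_s^\ell/\ell!$ beyond $a_*^\ell/\ell!=\tfrac1\ell$.

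The main obstacle, and where the real work lies, is the nucleation machinery itself: formulating the correct notion of igniting configuration and of its growth, and proving the sharp dichotomy — subcritical for $a^\ell/\ell!<1/\ell$, supercritical for $a^\ell/\ell!>1/\ell$ — with the matching quantitative estimates on each side (the first‑moment bound on nuclei below the threshold, and the second‑moment lower bound on successful configurations above it). This is the same circle of ideas needed for Theorem~\ref{K2-theorem}, so much of it should be available from that proof; the new content is extracting the precise constant $1/\ell$ and obtaining control uniform in $z$. A secondary point to verify is internal consistency with non‑spanning in the window $a_*<a<a_s$: there the grown regions occupy each layer only up to density $1-o(1)$ and the $o(n^2)$ holes they leave persist, so no layer becomes genuinely full and the growth does not cascade across all of $\bZ_m$ — a feature that should fall out of the same quantitative analysis. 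Granting the machinery, the remaining steps (the deterministic closure bound, the Markov and second‑moment estimates, and the concentration turning expectations into a.a.s.\ statements) are routine.
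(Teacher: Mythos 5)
The dichotomy you identify and the threshold constant $a_*=(\;(\ell-1)!\;)^{1/\ell}$, i.e.\ $a_*^\ell/\ell!=1/\ell$, are both correct, and your high-level heuristic for why $1/\ell$ appears matches the paper's intuition. However, there is a concrete error in the abundance half that makes the proposal fail as written, and the scarcity half leans on an unproved and vague deterministic claim where the paper instead gives a clean explicit argument.

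The error in the abundance direction: you assert that a polylogarithmic window $L=n^{o(1)}$ around each layer suffices, because ``each window contains a successful igniting configuration with probability $\ge 1-\exp(-\Omega(L\log n))$.'' This is false. The object that can seed occupation of a block of planes is a $\theta$-IS plane (Lemma~\ref{single-plane-above-IS}), and its probability is $\Theta\bigl(n^{-1/\ell}(\log n)^{1+1/\ell}\bigr)$ --- \emph{polynomially} small. A window of $L$ consecutive planes therefore contains a $\theta$-IS plane with probability roughly $L\,n^{-1/\ell}(\log n)^{1+1/\ell}$, which tends to $0$ for polylogarithmic $L$. The windows must be polynomial in $n$: the paper (Lemma~\ref{almost-all-spanned}) takes subintervals $R_i$ of length $m'=n^{\gamma'}$ with $1/\ell<\gamma'<\gamma$, chosen so that both $m'\pgood\to\infty$ (so $R_i$ contains a $\theta$-IS plane) and $m'\pbad^2/\pgood\to 0$ (so $R_i$ contains no blocking pair between $\theta$-IS planes); this is possible exactly when $a^\ell/\ell!>1/\ell$. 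Your ``only needs to beat $\log L$'' reasoning mislocates the bottleneck: the bottleneck is not a union bound over windows but the existence of a $\theta$-IS plane within a single window. Relatedly, the ``igniting configuration'' you describe (a column with $\ell$ open sites) is not enough to internally span a plane at threshold $\theta=2\ell+1$; you need $\ell+1$ on a line, which is where the $n^{-1/\ell}$ comes from.

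For the scarcity direction, your outline relies on an unstated ``deterministic closure bound'' (every component of $\omega_\infty\setminus\omega_0$ has size $O(K)$ unless a nucleus is present) and a first-moment bound on ``nuclei'' whose combinatorial classification is never given. The paper's Lemma~\ref{almost-inert} avoids this entirely: it tessellates $\bZ_m$ into intervals of length $m'=n^\beta$ with $a^\ell/\ell!<\alpha<\beta<1/\ell$, and calls an interval almost inert if (i) every plane in it is $\theta$-II, (ii) it has no $\bZ$-assisted site, and (iii) it has a $(\theta-1)$-II plane among its first $n^\alpha$ and last $n^\alpha$ planes. Almost-inert intervals stay essentially unchanged under the dynamics even with full occupation outside, and almost all intervals are almost inert when $a^\ell/\ell!<1/\ell$. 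This is a self-contained deterministic plus first-moment argument, whereas yours punts the deterministic step to ``the machinery.'' Finally, note the paper's actual proof of Theorem~\ref{almost-span-thm} is a two-line reduction to Lemmas~\ref{almost-all-spanned} and~\ref{almost-inert} (with a separate treatment of $\theta=3$ via Lemmas~\ref{theta-3} and~\ref{theta-2}), so the structure it uses is the concrete interval-tessellation just described, not the nucleation-and-growth framework you invoke.
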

In other words, when the scaling constant $a$ of (\ref{p-intro})
is smaller than $(\ell!/\ell)^{1/\ell}$, then a very small proportion of sites 
becomes open during the bootstrap process.  When $a$ is larger 
than $(\ell!/\ell)^{1/\ell}$ but smaller than $(\frac12(\gamma+1/\ell) \ell!)^{1/\ell}$, 
then most, but not all, of the sites become open.  
Finally, when $a>(\frac12(\gamma+1/\ell) \ell!)^{1/\ell}$, then all sites become open.

Finally, we give the promised example with mixed phase transition, for which we need to 
assume that $m$ satisfies a particular boundary scaling with $\gamma=1/\ell$.  

\begin{theorem}\label{mixed-thm} 
Assume bootstrap percolation on $\bZ_m \times K_n^2$, and suppose that 
$\theta = 2\ell + 1$, that
\begin{equation}
\label{boudary-m-intro}
m\sim\frac{n^{1/\ell}}{(\log n)^{1+1/\ell}},
\end{equation}
and that $p$ satisfies (\ref{p-intro}).  
Then 
\begin{equation}\label{mixed-thm-eq} 
\probsub{p}{\Span}\to
\begin{cases}
0 & a^\ell/\ell!<1/\ell \\
 1-\exp(-2a^{\ell+1}/(\ell+1)!) & a^\ell/\ell!>1/\ell
\end{cases}
\end{equation}

\end{theorem}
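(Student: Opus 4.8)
The plan is to show that, at the boundary scaling, whether spanning occurs is decided by a single rare nucleation event whose number of occurrences converges to a Poisson law, and that the dichotomy in \eqref{mixed-thm-eq} reflects whether such a nucleation can propagate. Write $\{z\}\times K_n^2$ for the $z$-th \emph{slice}, call a row or column of a slice \emph{$k$-heavy} if it contains at least $k$ occupied sites, and say a slice is a \emph{nucleus} if it has an $(\ell+1)$-heavy line. The first step is that the number $N$ of nuclei satisfies $N\Rightarrow\mathrm{Poisson}(\lambda)$ with $\lambda=2a^{\ell+1}/(\ell+1)!$: by \eqref{p-intro} and \eqref{boudary-m-intro} the expected number of $(\ell+1)$-heavy lines, summed over all slices and both orientations, is $2mn\binom{n}{\ell+1}p^{\ell+1}(1+o(1))=(1+o(1))\,2m\,n^{\ell+2}p^{\ell+1}/(\ell+1)!\to\lambda$, and since two lines of one slice are almost never both heavy while lines in disjoint slices are independent, the method of moments (or Chen--Stein) gives the Poisson limit, whence $\probsub{p}{N\ge1}\to1-e^{-\lambda}$.

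Next I would trap $\Span$ between $\{N=0\}$ and $\{N\ge1\}$. If $N=0$ then every line of every slice is at most $\ell$-heavy, and a first-moment estimate shows that whp no site has $2\ell+1$ occupied neighbours in $\omega$ (a site could reach $2\ell+1$ only with an occupied cycle-neighbour together with two saturated lines, and the expected number of such sites is $O((\log n)/n)$), so $\omega$ is inert and $\omega_\infty=\omega\ne V$ whp; hence $\probsub{p}{\Span}\le\probsub{p}{N\ge1}+o(1)$. For the reverse bound assume $a^\ell/\ell!>1/\ell$ and let $\cG$ be the event that \emph{every} slice has an $\ell$-heavy line and meets an auxiliary genericity requirement on the positions of its occupied sites. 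The probability that a fixed slice has no $\ell$-heavy line is $n^{-(1+o(1))a^\ell/\ell!}$, so a union bound over the $m=n^{1/\ell}/(\log n)^{1+1/\ell}$ slices gives $\probsub{p}{\cG}\to1$ exactly in the regime $a^\ell/\ell!>1/\ell$.

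On $\cG$, one nucleus ignites a growth that engulfs $\bZ_m$. An $(\ell+1)$-heavy line fills completely --- its $\ell+1$ occupied sites, assisted by occupied sites of the same slice lying on enough distinct transverse lines, drive it past threshold $2\ell+1$ --- after which the standard $K_n^2$ mechanism fills a large coordinate rectangle and the whole slice becomes occupied. Once a block of consecutive slices is full, the neighbouring slice receives a uniform $+1$ from its occupied cycle-neighbour, so its $\ell$-heavy line fills likewise, a rectangle forms, and that slice too becomes occupied; iterating around the cycle yields $\omega_\infty=V$. Thus on $\{N\ge1\}\cap\cG$ spanning occurs, so $\probsub{p}{\Span}\ge\probsub{p}{N\ge1}-o(1)$, which together with the first step proves \eqref{mixed-thm-eq} for $a^\ell/\ell!>1/\ell$. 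When $a^\ell/\ell!<1/\ell$, the same union bound shows that whp some slice has no $\ell$-heavy line; even surrounded by fully occupied slices, each of its sites always has at most $(\ell-1)+(\ell-1)+2=2\ell<2\ell+1$ occupied neighbours, so that slice is never saturated and $\probsub{p}{\Span}\to0$ --- this is the mechanism behind the scarce half of Theorem~\ref{almost-span-thm}, which transfers unchanged to the boundary scaling.

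The main obstacle is to make the growth of the third step rigorous and to check that above the threshold it is not derailed. One must: (i) control how an $\ell$- or $(\ell+1)$-heavy line interacts with the sparse but numerous remaining occupied sites of its slice so as to produce a fully occupied coordinate rectangle large enough to internally span the slice; (ii) rule out, whp on $\cG$, configurations of two or more consecutive ``stubborn'' slices that refuse to fill when only one neighbour is saturated --- these are precisely the anisotropic obstructions that proliferate once $a^\ell/\ell!<1/\ell$, and their near-absence above the threshold is what produces the sharp cutoff; and (iii) establish the heavy-line and genericity probabilities with the correct leading constants, so that the parameter surviving in the limit is exactly $\lambda=2a^{\ell+1}/(\ell+1)!$. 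Steps (i)--(iii) reuse the $K_n^2$-bootstrap estimates behind Theorems~\ref{K2-theorem} and~\ref{almost-span-thm}; the delicate point is the two-scale interaction between the $K_n^2$-directions and the cycle direction, tuned exactly to $\gamma=1/\ell$, which is what makes this the borderline case.
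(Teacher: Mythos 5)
Your broad plan — Poisson limit for nuclei, an inertness argument for the upper bound, a ``fill everything'' condition $\cG$ for the lower bound, and a single bad slice as blocker below threshold — captures the right intuition, and the Poisson computation of $\lambda=2a^{\ell+1}/(\ell+1)!$ and the first-moment estimate in the upper bound are correct. But the core of the argument has a factor-of-two error that leaves a genuine gap.

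\textbf{The probability you assign to ``no $\ell$-heavy line'' is wrong, and this error propagates.} A slice has no $\ell$-heavy line (in either orientation) iff it is not $\ell$-viable, and the paper's Lemma~\ref{single-plane-viability} gives
$\probsub{p}{\text{not $\ell$-viable}}\sim n^{-2a^\ell/\ell!}$, \emph{not} $n^{-(1+o(1))a^\ell/\ell!}$ as you write. The exponent $a^\ell/\ell!$ is the probability that a single \emph{orientation} has no heavy line; since the two orientations are nearly independent, the joint event picks up a factor of $2$ in the exponent. This matters in both directions of your argument. On the subcritical side, the expected number of slices with no $\ell$-heavy line is $m\cdot n^{-2a^\ell/\ell!}$, which with $m\sim n^{1/\ell}/(\log n)^{1+1/\ell}$ tends to $\infty$ only when $a^\ell/\ell!<1/(2\ell)$. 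For $a^\ell/\ell!\in\bigl(\tfrac1{2\ell},\tfrac1\ell\bigr)$, whp \emph{every} slice is $\ell$-viable, yet the theorem asserts $\probsub{p}{\Span}\to0$; your single-slice blocker simply does not exist in this range. The correct obstruction is subtler: it is a \emph{pair} of $(2\ell)$-II slices (each has probability $\sim 2n^{-a^\ell/\ell!}$ by Lemma~\ref{single-plane-even-IS}) flanking an interval of $\theta$-II slices, with no $\bZ$-assistance anywhere in between. A $(2\ell)$-II slice by itself is \emph{not} a blocker — a site in it can still reach threshold if both $\bZ$-neighbours are filled — but two of them, each shielding one side of a gap, prevent the gap from ever filling. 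This is exactly the construction in the paper's Lemma~\ref{almost-inert} (conditions (1)--(3) there), and it is what produces the threshold at $a^\ell/\ell!=1/\ell$ rather than at $1/(2\ell)$.

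\textbf{The lower-bound event $\cG$ is also too weak.} Having an $\ell$-heavy line (i.e.\ being $\ell$-viable) is not enough for a slice to become fully occupied when one neighbour saturates: an $\ell$-viable but $(2\ell)$-II slice has all sites with $\le 2\ell-1$ open $K$-neighbours, so $+1$ from one side gives at most $2\ell<\theta$, and the propagation stalls. What is actually needed is that every slice be $(2\ell)$-\emph{IS} (internally spanned at threshold $2\ell$), because $(2\ell)$-IS plus one fully occupied neighbour implies the slice spans at threshold $\theta=2\ell+1$. By Lemma~\ref{single-plane-even-IS}, $\probsub{p}{\text{not }(2\ell)\text{-IS}}\sim 2n^{-a^\ell/\ell!}$, so the union bound over $m$ slices vanishes precisely for $a^\ell/\ell!>1/\ell$ — this is the source of the correct threshold. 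You gesture at an ``auxiliary genericity requirement,'' but as stated it is the $\ell$-viability event (with the wrong probability) that carries your argument, and that is neither the right event nor the right rate.

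To summarise where the paper goes instead: for $a^\ell/\ell!>1/\ell$ it shows whp every plane is $(2\ell)$-IS (Lemma~\ref{single-plane-even-IS}), so the sufficient condition of Lemma~\ref{sufficient-condition} reduces to the existence of a single $(2\ell+1)$-IS plane, whose count is Poisson with mean $\lambda$ (Lemma~\ref{single-plane-above-IS}, equation (\ref{sg-eq1})), while (\ref{sg-eq2}) gives the matching upper bound; for $a^\ell/\ell!<1/\ell$ it applies Lemma~\ref{almost-inert}, whose blocking structure uses pairs of $(\theta-1)$-II planes and no $\bZ$-assistance. You should replace your $\ell$-viability events with $(2\ell)$-IS and $(2\ell)$-II respectively, and replace the single-slice blocker by the two-slice construction, to close the gap on $a^\ell/\ell!\in(1/(2\ell),1/\ell)$.
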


The proofs of Theorems~\ref{K2-theorem}--\ref{mixed-thm} are completed in
Section~\ref{sec-exceptional}, after the auxiliary results are established in 
Sections~\ref{sec-preliminary}--\ref{sec-exceptional}. In particular, we use
a result on the birthday problem in a ``slightly supercritical'' regime, which 
is proved in Section~\ref{sec-preliminary}.


\section{Preliminary results}\label{sec-preliminary}

\subsection{Two simple lemmas}

\begin{lemma}\label{lemma-doubleone}
In a sequence of $k$ independent Bernoulli random variables, 
which are $1$ with a small probability 
$r$, 
$$
\Prob{\text{no two consecutive $1$s}}=\exp(-kr^2+\mathcal O(kr^3+r^2)).
$$
\end{lemma}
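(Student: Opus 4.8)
The plan is to compute the probability directly via inclusion–exclusion (or a transfer-matrix/recursion), and then extract the asymptotics. Let $X_1,\dots,X_k$ be i.i.d.\ Bernoulli$(r)$, and let $A_i$ be the event $\{X_i = X_{i+1} = 1\}$ for $i = 1,\dots,k-1$. We want $\Prob{\bigcap_i A_i^c}$. The cleanest route is the exact recursion: if $q_k$ denotes the probability that a length-$k$ string has no two consecutive $1$s, then conditioning on whether the last symbol is $0$, $1$-preceded-by-$0$, etc., gives $q_k = (1-r)q_{k-1} + r(1-r)q_{k-2}$ with $q_0 = 1$, $q_1 = 1$. This is a linear recurrence whose characteristic roots are $\beta_\pm = \tfrac{1}{2}\bigl((1-r) \pm \sqrt{(1-r)^2 + 4r(1-r)}\bigr)$; the dominant root is $\beta_+ = 1 - r^2 + \mathcal O(r^3)$, so $q_k = C_+(r)\,\beta_+^{\,k} + C_-(r)\,\beta_-^{\,k}$ with $C_+(r) = 1 + \mathcal O(r^2)$ and $|\beta_-| = \mathcal O(r)$, whence $q_k = (1 + \mathcal O(r^2))\,\beta_+^{\,k} + \mathcal O(r^k)$.

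Taking logarithms, $\log q_k = k\log\beta_+ + \log(1 + \mathcal O(r^2)) + \log(1 + \mathcal O((r/\beta_+)^k))$. Since $\log\beta_+ = \log(1 - r^2 + \mathcal O(r^3)) = -r^2 + \mathcal O(r^3)$, the first term is $-kr^2 + \mathcal O(kr^3)$; the second term is $\mathcal O(r^2)$; and the third term is negligible (exponentially small in $k$, and in particular $\mathcal O(r^2)$ once $k \ge 2$). Assembling these, $\log q_k = -kr^2 + \mathcal O(kr^3 + r^2)$, which is exactly the claimed identity. One should note the convention that the error term is meaningful in the regime $kr^3 \to 0$, $r \to 0$ (otherwise the statement is vacuous); I would state this explicitly.

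An alternative I would mention is the direct combinatorial expansion: $q_k = \sum_{j\ge 0} \binom{k-j+1}{j}(1-r)^{k-2j}\bigl(r(1-r)\bigr)^j \cdot(\text{appropriate count})$—more precisely, the number of binary strings of length $k$ with exactly $j$ ones and no two adjacent is $\binom{k-j+1}{j}$, so $q_k = \sum_{j=0}^{\lceil k/2\rceil}\binom{k-j+1}{j} r^j (1-r)^{k-j}$. Comparing this with the unconstrained identity $1 = \sum_j \binom{k}{j} r^j(1-r)^{k-j}$ and estimating the difference term-by-term (the $j=0,1$ terms agree, and for $j\ge 2$ one bounds $\binom{k}{j} - \binom{k-j+1}{j}$), one again isolates a leading correction of size $-\binom{k-1}{2}r^2(1-r)^{k-2} \sim -\binom{k}{2} r^2$, matching $\exp(-kr^2)$ to the stated precision. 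The main (minor) obstacle is simply organizing the error bookkeeping cleanly—ensuring the $\mathcal O(kr^3)$ from $\log\beta_+$ and the $\mathcal O(r^2)$ from the constant $C_+(r)$ and from the subdominant root are all uniform—but there is no conceptual difficulty here; the recurrence approach makes this essentially automatic.
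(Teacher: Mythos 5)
Your approach is essentially the same as the paper's: you solve the linear recurrence $q_k = (1-r)q_{k-1} + r(1-r)q_{k-2}$ whose characteristic roots $\beta_\pm = \tfrac{1}{2}\bigl((1-r)\pm\sqrt{1+2r-3r^2}\bigr)$ are exactly the bases $\tfrac{1-r\pm S}{2}$ appearing in the paper's closed-form expression, and then extract the asymptotics by Taylor expansion; the paper simply states the exact two-term formula with its coefficients $\tfrac{1+r\pm S}{2S}$ and says ``Taylor expansion,'' which is precisely your estimate $C_+(r)=1+\mathcal O(r^2)$, $\log\beta_+=-r^2+\mathcal O(r^3)$, and the subdominant contribution of size $\mathcal O(r^2)$. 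The argument is correct.
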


\begin{proof}
Let $S=\sqrt{1+2r-3r^2}$. 
Then 
\begin{equation*}
\begin{aligned}
\Prob{\text{no two consecutive $1$s}} &= \frac{1+r+S}{2S}\left(\frac {1-r+S}2\right)^k-
& \frac{1+r-S}{2S}\left(\frac {1-r-S}2\right)^k,
\end{aligned}
\end{equation*}
and the result follows by Taylor expansion. 
\end{proof}

\begin{lemma} 
\label{binomial-ld}
For any $p,\epsilon\in (0,1)$ and integer $n$, 
$$
\begin{aligned}
&\prob{\text{\rm Binomal}(n,p)\le  (1-\epsilon)np}\le \exp(-np\epsilon^2/2)\,,\\
&\prob{\text{\rm Binomal}(n,p)\ge  (1+\epsilon)np}\le \exp(-np\epsilon^2/3)\,.
\end{aligned}
$$
\end{lemma}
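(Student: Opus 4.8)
These are the classical Chernoff bounds, so the plan is to run the exponential-moment method and then reduce to two elementary scalar inequalities. Let $X$ have the $\mathrm{Binomial}(n,p)$ distribution. For the upper tail I would apply Markov's inequality to $e^{tX}$ with $t>0$; since $\E[e^{tX}]=(1+p(e^t-1))^n\le\exp\!\big(np(e^t-1)\big)$ by $1+x\le e^x$, this yields
$$
\prob{X\ge(1+\epsilon)np}\le\exp\!\big(np[e^t-1-t(1+\epsilon)]\big)\qquad(t>0),
$$
and the choice $t=\log(1+\epsilon)$, which minimizes the bracket, turns the right-hand side into $\exp\!\big(-np[(1+\epsilon)\log(1+\epsilon)-\epsilon]\big)$. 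Symmetrically, for the lower tail I would apply Markov's inequality to $e^{-tX}$ with $t>0$, use $\E[e^{-tX}]\le\exp\!\big(np(e^{-t}-1)\big)$, and optimize at $t=-\log(1-\epsilon)$, which gives $\prob{X\le(1-\epsilon)np}\le\exp\!\big(-np[(1-\epsilon)\log(1-\epsilon)+\epsilon]\big)$.

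It then remains to verify the deterministic inequalities
$$
(1+\epsilon)\log(1+\epsilon)-\epsilon\ge\tfrac{\epsilon^2}{3},\qquad(1-\epsilon)\log(1-\epsilon)+\epsilon\ge\tfrac{\epsilon^2}{2},\qquad\epsilon\in(0,1).
$$
For the second, the function $g(\epsilon):=(1-\epsilon)\log(1-\epsilon)+\epsilon-\epsilon^2/2$ has $g(0)=g'(0)=0$ and $g''(\epsilon)=\epsilon/(1-\epsilon)\ge0$, so $g$ is convex and nonnegative on $[0,1)$. For the first, set $f(\epsilon):=(1+\epsilon)\log(1+\epsilon)-\epsilon-\epsilon^2/3$; then $f(0)=f'(0)=0$ and $f''(\epsilon)=\frac{1}{1+\epsilon}-\frac23$ is positive on $[0,1/2)$ and negative on $(1/2,1]$. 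Hence $f$ is convex on $[0,1/2]$ (so $f\ge0$ and $f'\ge0$ there), while on $[1/2,1]$ the derivative $f'$ is decreasing with $f'(1)=\log 2-\tfrac23>0$, so $f'$ remains positive and $f$ is increasing; thus $f\ge0$ on all of $(0,1)$. Feeding these two inequalities back into the displays of the previous paragraph proves the lemma.

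There is no genuine obstacle here — this is textbook material and one could instead simply cite a standard Chernoff-bound reference. The only point needing a little care is that the upper-tail constant must be strictly smaller than the lower-tail constant $2$: near $\epsilon=0$ the exponent $(1+\epsilon)\log(1+\epsilon)-\epsilon$ behaves like $\epsilon^2/2$, but by $\epsilon=1$ it has dropped to $2\log 2-1\approx 0.386<1/2$, so one is forced to the weaker bound $\epsilon^2/3$, which is why the sign analysis of $f''$ is needed rather than a one-line convexity argument.
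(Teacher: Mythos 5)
Your proof is correct and follows essentially the same route as the paper: exponential Markov/Chebyshev with the optimal tilting parameters $t=\log(1+\epsilon)$ and $t=-\log(1-\epsilon)$, a $\log(1+x)\le x$ (equivalently $1+x\le e^x$) simplification of the moment generating function, and the two scalar inequalities $(1-\epsilon)\log(1-\epsilon)+\epsilon\ge\epsilon^2/2$ and $(1+\epsilon)\log(1+\epsilon)-\epsilon\ge\epsilon^2/3$. The paper asserts those two inequalities without proof, whereas you verify them by a second-derivative analysis, which is a small but welcome amount of extra detail rather than a different argument.
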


\begin{proof} 
By exponential Chebyshev, the first probability is for any $\lambda>0$  bounded above 
by 
$$
\exp\left[ \lambda np(1-\epsilon)+n\log(pe^{-\lambda}+1-p)\right]
$$
Taking $\lambda=-\log(1-\epsilon)$, and using $\log(1-\epsilon p)\le -\epsilon p$
and $(1-\epsilon)\log (1-\epsilon)+\epsilon\ge \epsilon^2/2$ gives the desired inequality.
The second probability is for any $\lambda>0$ bounded above by 
$$
\exp\left[ -\lambda np(1+\epsilon)+n\log(pe^{\lambda}+1-p)\right]
$$ 
and we take $\lambda=\log(1+\epsilon)$, and use $\log(1+\epsilon p)\le \epsilon p$
and $(1+\epsilon)\log (1+\epsilon)-\epsilon\ge \epsilon^2/3$.
\end{proof}

\subsection{Birthday Problem}
 
In this self-contained section, we use 
$m$ and $n$ as is customary in the classic birthday problem, therefore 
these variables not have the same meaning as in the rest of the paper.  

The $k$-coincidence birthday problem asks, ``What is the probability that, among $m$ people with birthdays chosen independently and uniformly at random from $[n]=\{1, \ldots, n\}$, there exists a set of $k$ people that have the same birthday?''  Let $A = A_{n,m,k}$ be the event that such a $k$-coincidence exists. 

\begin{lemma}\label{bday-thm} Assume $k$ is fixed, $n$ is large, and $m$ depends on $n$ in 
such a way that $m^{k+1}\ll n^k$. Then 
\begin{equation}\label{bday-asymptotics}
\prob{A^c} \sim \exp\left(-\frac 1{k!}\frac{m^{k}}{n^{k-1}}\right),
\end{equation}
as $n\to\infty$.
\end{lemma}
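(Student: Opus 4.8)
The natural route is Poissonization. Write $\mu = m/n$ and let $M_1,\dots,M_n$ be i.i.d.\ $\mathrm{Poisson}(\mu)$ random variables; conditioning on $\sum_j M_j = m$ reproduces exactly the multinomial occupancy vector $(N_1,\dots,N_n)$ of the $m$ birthdays, and $A^c = \{\max_j N_j \le k-1\}$. Let $M_j'$ denote $M_j$ conditioned on $\{M_j \le k-1\}$ (so the $M_j'$ are i.i.d., bounded by $k-1$), put $S = \sum_j M_j \sim \mathrm{Poisson}(m)$ and $S' = \sum_j M_j'$. A one-line computation with conditional probabilities gives the exact identity
$$
\prob{A^c} = \frac{\prob{M_1\le k-1}^{\,n}\,\prob{S'=m}}{\prob{S=m}}.
$$
I would then establish three asymptotics: (i) $\prob{M_1\le k-1}^{n}\sim\exp\!\big(-\tfrac1{k!}\tfrac{m^k}{n^{k-1}}\big)$; (ii) $\prob{S=m}=e^{-m}m^m/m!\sim(2\pi m)^{-1/2}$ by Stirling; (iii) $\prob{S'=m}\sim(2\pi m)^{-1/2}$. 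Multiplying these yields (\ref{bday-asymptotics}). (One may assume $k\ge 2$, since $A$ is trivial for $k=1$, and $m\to\infty$, since otherwise all three displayed quantities have trivial limits.) Step (i) is routine: as $\mu\to 0$ we have $\prob{M_1\ge k} = \tfrac{\mu^k}{k!}(1+O(\mu))$, so $-n\log\prob{M_1\le k-1} = \tfrac1{k!}\tfrac{m^k}{n^{k-1}} + o(1)$, using $n\mu^{k+1}=m^{k+1}/n^k\to 0$ (the hypothesis) and $n\mu^{2k}=m^{2k}/n^{2k-1}\to 0$ (which also follows from $m^{k+1}\ll n^k$, since $\tfrac{2k^2}{k+1}<2k-1$ for $k\ge 2$).

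\textbf{The main point: step (iii).}
The $M_j'$ satisfy $\prob{M_j'=0}\to 1$, $\prob{M_j'=1}=\mu(1+o(1))$, have mean $\mu'=\mu-\Theta(\mu^k)$ and variance ${\sigma'}^2=\mu\,(1+O(\mu^{k-1}))$; hence $n{\sigma'}^2 = m(1+o(1))$ and, crucially, $m-n\mu' = \Theta\!\big(\tfrac{m^k}{n^{k-1}}\big) = o(\sqrt m)$. This last estimate is precisely where the strict hypothesis $m^{k+1}\ll n^k$ enters, and for $k=2$ it is invoked with essentially no room to spare (there $m^{k+1}\ll n^k$ reads $m^3\ll n^2$, which is exactly what makes $(m^k/n^{k-1})/\sqrt m\to 0$). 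I would prove the local limit theorem
$$
\prob{S'=m}\sim\frac1{\sqrt{2\pi n{\sigma'}^2}}\exp\!\Big(-\frac{(m-n\mu')^2}{2n{\sigma'}^2}\Big)\sim(2\pi m)^{-1/2}
$$
by characteristic functions. Writing $\prob{S'=m}=\frac1{2\pi}\int_{-\pi}^{\pi}\varphi(t)^n e^{-itm}\,dt$ with $\varphi(t)=\E\, e^{itM_1'}$, the fact that $M_1'$ has lattice span $1$ gives $1-|\varphi(t)|^2 \ge c\mu(1-\cos t)\ge c'\mu t^2$ on $[-\pi,\pi]$, so $|\varphi(t)|^n\le e^{-c' m t^2}$ and the contribution of $\{|t|>\delta_0\}$ (any fixed small $\delta_0$) is $O(e^{-c'm\delta_0^2})=o(m^{-1/2})$. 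On $\{|t|\le\delta_0\}$, a Taylor expansion gives $\log\varphi(t)^n = itn\mu' - \tfrac12 t^2 n{\sigma'}^2 + O(m|t|^3)$ (the cubic error uses the third absolute moment bound $\E|M_1'-\mu'|^3=O(\mu)$), and after the substitution $t=s/\sqrt m$ the linear term becomes $\exp\!\big(is(n\mu'-m)/\sqrt m\big)\to 1$ pointwise because $m-n\mu'=o(\sqrt m)$; dominated convergence (the integrand is bounded by $e^{-s^2/4}$ once $\delta_0$ is small) then produces $\prob{S'=m}=(2\pi m)^{-1/2}(1+o(1))$.

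\textbf{Main obstacle.}
The entire difficulty is in step (iii), and at bottom in the ``slightly supercritical'' nature of the regime: $\lambda_n := \tfrac1{k!}\tfrac{m^k}{n^{k-1}}$ is allowed to tend to $\infty$. A direct Bonferroni/inclusion--exclusion expansion for $\prob{A^c}$ yields an alternating series whose $l$-th term is $\approx (-\lambda_n)^l/l!$, so the partial sums must cancel down to $e^{-\lambda_n}$ against individual terms of size $e^{\lambda_n}/\sqrt{\lambda_n}$ — hopeless to control once $\lambda_n\to\infty$. Poissonization is what removes this obstruction, because $\prob{S=m}$ and $\prob{S'=m}$ are genuine probabilities in which no cancellation occurs; the price paid is the local limit theorem above, whose two delicate inputs are the estimate $m-n\mu'=o(\sqrt m)$ and the uniform control of $\varphi(t)^n$ away from $t=0$ for a triangular array whose per-coordinate variance $\mu$ tends to $0$. (One could instead quote an off-the-shelf local CLT for triangular arrays of lattice variables, but the characteristic-function argument keeps this section self-contained.)
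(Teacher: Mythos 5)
Your argument is correct, and while it shares the initial Poissonization idea with the paper (the identity $\prob{A^c}=\frac{m!}{n^m}\widehat{e_k^n}[m]$ is exactly your
$\prob{M_1\le k-1}^n\,\prob{S'=m}/\prob{S=m}$ after expanding $\prob{S'=m}=\mu^m\,\widehat{e_k^n}[m]/e_k(\mu)^n$), the key analytic step is genuinely different. The paper outsources the hard part to a saddle-point coefficient asymptotics theorem of Gardy (Theorem~4 of~\cite{Gar}), which centers the tilted measure exactly: it picks $\rho$ with $\rho f'(\rho)/f(\rho)=m/n$, so the Gaussian factor is identically $1$ and $\widehat{e_k^n}[m]\sim f(\rho)^n/(\rho^m\sqrt{2\pi m})$; the remaining work is an elementary but careful Taylor expansion of $\rho$, $\rho^m$ and $f(\rho)^n$ to simplify the answer. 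You instead make no attempt to center: you parameterize by $\mu=m/n$, factor off the easy term $\prob{M_1\le k-1}^n\sim e^{-\lambda_n}$, and reduce the remaining ratio to a local CLT for the triangular array of truncated Poissons $M_j'$. The price of not centering is that the Gaussian factor $\exp\big(-(m-n\mu')^2/(2n{\sigma'}^2)\big)$ is not automatically $1$; your observation that it is $1+o(1)$ because $m-n\mu'=\Theta(m^k/n^{k-1})=o(\sqrt m)$ is exactly where the hypothesis $m^{k+1}\ll n^k$ does its work (via $m^{2k-1}\ll n^{2k-2}$, true for $k\ge 2$), and your remark that for $k=2$ this is invoked with no slack is accurate. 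What Gardy's theorem buys the paper is that the local limit theorem itself is a citation rather than a proof; what your route buys is self-containedness, at the cost of carrying out a characteristic-function LLT for a triangular array with per-coordinate variance $\mu\to 0$, and correctly identifying the discrepancy $m-n\mu'$ as the quantity that must be controlled. Both proofs are valid; yours is more elementary in its ingredients, the paper's is shorter because the analytic core is a black box.

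One small remark: your identity handles the paper's intermediate expression
$e^{-m}(m/n)^n\widehat{e_k^n}[m]/\prob{\mathrm{Poisson}(m)=m}$ more cleanly than the paper itself does — note the exponent $n$ there appears to be a typo for $m$ (the subsequent line $\sim \frac{m!}{n^m}\widehat{e_k^n}[m]$ is what both expressions reduce to). Your decomposition avoids this bookkeeping entirely.
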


It is not difficult to show, using Poisson approximation,
that (\ref{bday-asymptotics}) holds when ${m^{k}}/{n^{k-1}}$ approaches a constant
\cite{AGG}. However, we need the formula when $m$ is larger by a multiplicative power of $\log n$,
in which case the standard upper bound for the error in Poisson approximation \cite{BHJ} is too large.
Instead, we use the following asymptotic expansion result. For a function $f$ analytic 
in a neighborhood of $0$, we denote by $\widehat{f}[m]$ the coefficient of $z^m$ in its power 
series expansion.

\begin{theorem}[Theorem 4 in~\cite{Gar}]
\label{Gar_Thm}
Let $f(z) = \sum_{i=0}^{\infty} a_i z^i$, and suppose that $a_0>0$, $a_1>0$, 
$a_i \geq 0$ for $i\geq 2$, and the series 
has positive radius of convergence.  Suppose $m, n\to\infty$ such that $m=o(n)$.  Define $\rho>0$ to be the unique positive solution of 
$$
\rho\frac{f'(\rho)}{f(\rho)}=\frac mn.
$$
Then
$$
\widehat{f^n}[m] = \frac{f(\rho)^n}{\rho^m \sqrt{2\pi m}}(1+o(1)).
$$
\end{theorem}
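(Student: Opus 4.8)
The plan is to read $\widehat{f^n}[m]$ probabilistically. Since $a_i\ge 0$, $a_0>0$, $a_1>0$, and $\sum_i a_i\rho^i=f(\rho)<\infty$ for $\rho$ inside the radius of convergence, I would introduce a random variable $X=X_\rho$ on $\{0,1,2,\dots\}$ by $\mathbb{P}(X=i)=a_i\rho^i/f(\rho)$, whose generating function is $\mathbb{E}z^{X}=f(\rho z)/f(\rho)$. If $S_n=X_1+\dots+X_n$ is a sum of $n$ i.i.d.\ copies, then $\mathbb{E}z^{S_n}=f(\rho z)^n/f(\rho)^n$, and comparing coefficients of $z^m$ yields the exact identity
\[
\widehat{f^n}[m]=\frac{f(\rho)^n}{\rho^m}\,\mathbb{P}(S_n=m).
\]
Since $\mathbb{E}X_\rho=\rho f'(\rho)/f(\rho)$, the defining equation for $\rho$ says exactly that $\mathbb{E}S_n=m$. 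It therefore remains to prove $\mathbb{P}(S_n=m)\sim(2\pi m)^{-1/2}$, i.e.\ that at its own mean $S_n$ satisfies the local limit estimate $(2\pi\operatorname{Var}S_n)^{-1/2}$, together with $\operatorname{Var}S_n\sim m$. Existence and uniqueness of $\rho$ come for free here: $g(\rho):=\rho f'(\rho)/f(\rho)=\mathbb{E}X_\rho$ is continuous and strictly increasing on $(0,R)$, with $g(0+)=0$ and $g'(\rho)=\operatorname{Var}(X_\rho)/\rho>0$, so for $n$ large each small value $m/n$ is attained exactly once.

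\textbf{Step 2: the one-step law degenerates.} Because $m=o(n)$, the mean $\mu:=\mathbb{E}X_\rho=m/n\to 0$, which forces $\rho\to 0$. In this regime $\mathbb{P}(X_\rho=0)\to 1$ and every fixed moment is dominated by its $i=1$ term: $\mathbb{E}X_\rho^{\,j}=\sum_i i^j a_i\rho^i/f(\rho)\sim a_1\rho/a_0\sim\mu$ for each $j\ge 1$. Hence $\sigma^2:=\operatorname{Var}(X_\rho)=\mathbb{E}X_\rho^2-\mu^2\sim\mu$, and more generally all cumulants of $X_\rho$ are $\sim\mu$, so $\operatorname{Var}S_n=n\sigma^2\sim n\mu=m$. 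It now suffices to establish $\mathbb{P}(S_n=m)\sim(2\pi n\sigma^2)^{-1/2}$.

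\textbf{Step 3: Fourier inversion.} I would write $\mathbb{P}(S_n=m)=\frac1{2\pi}\int_{-\pi}^{\pi}\phi_\rho(t)^n e^{-imt}\,dt$ with $\phi_\rho(t)=\mathbb{E}e^{itX_\rho}=f(\rho e^{it})/f(\rho)$, and split at $|t|=m^{-1/3}$. On the central block, $\log\phi_\rho(t)=i\mu t-\tfrac12\sigma^2 t^2+O(\mu|t|^3)$, and since $n\mu=m$ exactly, the linear term cancels $imt$, leaving $n\log\phi_\rho(t)-imt=-\tfrac12 n\sigma^2 t^2+O(m|t|^3)$; substituting $s=t\sqrt{n\sigma^2}$ turns the cubic error into $O\!\big(m(n\sigma^2)^{-3/2}|s|^3\big)=O(m^{-1/2}|s|^3)$, and as $m^{-1/3}\sqrt{n\sigma^2}\to\infty$ the block contributes $\frac1{\sqrt{n\sigma^2}}(\sqrt{2\pi}+o(1))$. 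On the range $m^{-1/3}\le|t|\le\pi$ I would bound $|\phi_\rho(t)|$ away from $1$: using $a_1>0$, $\operatorname{Re}(1-\phi_\rho(t))\ge\mathbb{P}(X_\rho=1)(1-\cos t)\ge c_1\mu t^2$, while $|\operatorname{Im}\phi_\rho(t)|\le\sum_{i\ge 1}\mathbb{P}(X_\rho=i)|\sin(it)|\le\mu|t|$, which is of smaller order because $\mu\to 0$; together these give $|\phi_\rho(t)|^2\le 1-c_2\mu t^2$ for all $|t|\le\pi$ once $\rho$ is small, so $|\phi_\rho(t)|^n\le\exp(-\tfrac12 c_2 m t^2)$ and $\int_{m^{-1/3}\le|t|\le\pi}|\phi_\rho(t)|^n\,dt=o(m^{-1/2})$. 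Adding the two contributions gives $\mathbb{P}(S_n=m)=(2\pi n\sigma^2)^{-1/2}(1+o(1))=(2\pi m)^{-1/2}(1+o(1))$, and the identity of Step~1 finishes the proof.

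\textbf{Main obstacle.} The crux is that $X_\rho$ depends on $n$ through $\rho=\rho_n$ and degenerates to the point mass at $0$, so no off-the-shelf local limit theorem applies — the Taylor remainder on the central block and, above all, the modulus bound $|\phi_\rho(t)|^2\le 1-c_2\mu t^2$ on the tail must be proved with constants uniform in $n$. That uniformity is precisely what the small-$\rho$ asymptotics $\mathbb{E}X_\rho^{\,j}\sim\mu$ of Step~2 are designed to supply. The hypothesis $m=o(n)$ enters only to guarantee $\mu\to 0$; the stronger requirement $m^{k+1}\ll n^k$ appearing in Lemma~\ref{bday-thm} is a constraint of that particular application, not of the theorem itself.
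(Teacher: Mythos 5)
The paper does not prove this statement at all: it is imported verbatim as Theorem 4 of Gardy~\cite{Gar}, so there is no internal proof to compare against, and what you have written is in effect a self-contained replacement for that citation. Your route --- exponential tilting $\mathbb{P}(X_\rho=i)=a_i\rho^i/f(\rho)$, the exact identity $\widehat{f^n}[m]=f(\rho)^n\rho^{-m}\,\mathbb{P}(S_n=m)$, and a local limit theorem at the mean for the triangular array $X_{\rho_n}$ --- is the probabilistic counterpart of the saddle-point analysis used in~\cite{Gar}, and I find it correct: the saddle equation is exactly $\mathbb{E}S_n=m$; since $m/n\to0$ and $\rho\mapsto\rho f'(\rho)/f(\rho)$ is increasing you indeed get $\rho\to0$, whence (using $a_0,a_1>0$ and $\rho$ bounded away from the radius of convergence) $\mathbb{E}X_\rho^{\,j}\sim\mu$ for each fixed $j$ and $n\sigma^2\sim m$; the characteristic-function bounds $\mathrm{Re}(1-\phi_\rho(t))\ge c_1\mu t^2$ and $|\mathrm{Im}\,\phi_\rho(t)|\le\mu|t|$ do give $|\phi_\rho(t)|^2\le1-c_2\mu t^2$ uniformly on $[-\pi,\pi]$, which kills the tail of the inversion integral; and your closing remark that $m^{k+1}\ll n^k$ is a constraint of Lemma~\ref{bday-thm}'s application rather than of the theorem is also right. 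The only places needing a little more care, all standard: on the central block you should justify passing to the Gaussian limit by dominated convergence (for $|s|$ up to $\approx m^{1/6}$ the error $O(m^{-1/2}|s|^3)$ is not $o(1)$ pointwise at the endpoint, but it is at most $s^2/4$ there, so $e^{-s^2/4}$ dominates); the expansion $\log\phi_\rho(t)=i\mu t-\tfrac12\sigma^2t^2+O(\mu|t|^3)$ needs the uniform third-moment bound $\mathbb{E}X_\rho^3=O(\mu)$ together with $|\phi_\rho(t)-1|\le\mu|t|\to0$ to legitimize the logarithm; and the claim $\mathbb{E}X_\rho^{\,j}\sim a_1\rho/a_0$ uses that $\sum_{i\ge2}i^ja_i\rho^{i}=O(\rho^2)$, which holds because $\rho$ stays in a compact subinterval of $[0,R)$. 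With those routine details filled in, your argument is a valid alternative to citing~\cite{Gar}; its advantage is self-containedness and transparency about where $m=o(n)$ enters (only to force $\mu\to0$), while the citation buys brevity and, in Gardy's formulation, more general statements about large powers that the paper does not need.
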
 

\begin{proof}[Proof of Lemma~\ref{bday-thm}]
 We will first give a formula for $\prob{A}$ using embedding into a Poisson process.

Suppose $\{\xi_s^i\}_{i=1}^n$ is a collection of $n$ i.i.d.~Poisson Processes with rate $1/n$, so $\xi_s^i$ is the number of people with birthday $i$ at time $s$, where people arrive at rate $1$.  Therefore, since the distribution of $(\xi_m^1, \ldots, \xi_m^n)$ conditional on $\left\{\sum_{i=1}^n \xi_m^i = m\right\}$ is multinomial, we have
\begin{equation}
\begin{aligned}
 \prob{A^c} &= \prob{\max_{i\in[n]} \xi_m^i \leq k-1 \middle| \sum_{i=1}^n \xi_m^i = m} \\
 &= \frac{\prob{\sum_{i=1}^n \xi_m^i = m, \max_{i\in[n]} \xi_m^i \leq k-1}}{\prob{\sum_{i=1}^n \xi_m^i = m}}  \\
 &= \frac{e^{-m}\left(\frac{m}{n}\right)^n \cdot \widehat{e_k(z)^n}[m]}{\prob{\text{Poisson}(m)=m}}\\
\label{polynomial formula} &\sim  \frac{m!}{n^m}\widehat{e_k(z)^n}[m]
\end{aligned}
\end{equation}
as $m,n\to \infty$, where $e_k(z) = \sum_{i=0}^{k-1} z^i /i!$.

In order to apply Theorem~\ref{Gar_Thm}, we need to estimate $\rho$ and $f(\rho)$ when $f=e_k$. 
We observe that
\begin{equation*}
\begin{aligned}
f(\rho)&=e^\rho-\frac 1{k!}\rho^k +\cO(\rho^{k+1})\\
f'(\rho)&=e^\rho-\frac 1{(k-1)!}\rho^{k-1} +\cO(\rho^{k})\\
\end{aligned}
\end{equation*}
From this, after factoring out $e^\rho$ from both $f(\rho)$ and $f'(\rho)$ and a short computation 
$$
\rho-\frac 1{(k-1)!}\rho^k+\cO(\rho^{k+1})=\frac mn
$$
and then 
$$
\rho=\frac mn+\frac 1{(k-1)!}\frac{m^k}{n^k}+\cO\left(\frac{m^{k+1}}{n^{k+1}}\right).
$$
Next we observe 
$$
\log \left(1+\frac 1{(k-1)!}\frac{m^{k-1}}{n^{k-1}}+\cO\left(\frac{m^{k}}{n^{k}}\right)\right)=
\frac1{(k-1)!}\frac{m^{k-1}}{n^{k-1}}+\cO\left(\frac{m^{k}}{n^{k}}\right), 
$$
and therefore
\begin{equation}\label{rhom}
\rho^m=\frac{m^m}{n^m}\exp\left(\frac1{(k-1)!}\frac{m^{k}}{n^{k-1}}+
\cO\left(\frac{m^{k+1}}{n^{k}}\right)\right).
\end{equation}
Now, 
\begin{equation*}
\begin{aligned}
\log f(\rho)&=\rho+\log\left(1-\frac 1{k!} e^{-\rho} \rho^k +\cO(\rho^{k+1})\right)\\
&=\frac mn+
\left(\frac 1{(k-1)!}-\frac 1{k!}\right)\frac{m^{k}}{n^{k}}+\cO\left(\frac{m^{k+1}}{n^{k+1}}\right)
\end{aligned}
\end{equation*}
and therefore
\begin{equation}\label{frhon}
f(\rho)^n=\exp\left( m+\left(\frac 1{(k-1)!}-
\frac 1{k!}\right)\frac{m^{k}}{n^{k-1}}+\cO\left(\frac{m^{k+1}}{n^{k}}\right)  \right).
\end{equation}
By Theorem~\ref{Gar_Thm}, (\ref{rhom}) and (\ref{frhon}), assuming $m^{k+1}\ll n^k$, 
$$
\widehat{e_k(z)^n}[m] = \widehat{f^n}[m] \sim \frac{e^m n^m}{m^m\sqrt{2\pi m}}
\exp\left(-\frac 1{k!}\frac{m^{k}}{n^{k-1}}\right),
$$
and the asymptotic formula for $\prob{A^c}$ follows from~\eqref{polynomial formula} and Stirling's approximation.
\end{proof}

\section{Single copies of Hamming planes} \label{sec-single}

In the sequel, we will refer to a {\em copy of $K_n^2$}, by which we mean a (deterministic) subgraph $\{i\} \times K_n^2$, for some $i\in \bZ_m$. Its four {\it subsquares\/} are obtained 
by division of $\{i\} \times K_n^2$ (if $n$ is odd) or $\{i\} \times K_{n-1}^2$ (if $n$ is even) into four disjoint congruent squares. 
For a fixed $k\ge 1$, we call a copy of $K_n^2$: 
\begin{itemize}
\item {\it $k$-viable} if it contains a horizontal or a vertical 
line with at least $k$ initially open sites;
\item {\it $k$-internally spanned\/} ({\it $k$-IS\/})
if the bootstrap dynamics with $\theta=k$ restricted to it spans it;
\item {\it $k$-internally inert\/} ({\it $k$-II\/})
if the bootstrap dynamics with $\theta=k$;
restricted to it does not change the initial configuration; 
\item {\it $k$-inert\/} if no site on it becomes open at time $1$
(perhaps with help of neighboring planes); and
\item {\it $k$-proper\/} if within each of the four 
subsquares there are at least $\theta$ horizontal and $\theta$  vertical lines, each containing
at least $k$ points. 
\end{itemize}
In this section, we will assume that $p$ is of the form 
\begin{equation}\label{form-p}
p=a\cdot \frac{(\log n)^{1/\ell}}{n^{1+1/\ell}}, 
\end{equation}
for some $a>0$, where $\ell=\lceil (\theta-1)/2\rceil$. We will now briefly explain why 
this is the critical scaling for spanning, by sketching a simplified argument 
that does not establish a critical constant $a$ but illustrates some of our arguments.  
When $\theta-1$ is odd, the 
probability that a copy of $K_n^2$ is not $\ell$-viable and the probability that it is not
$(\theta-1)$-IS are both about $\exp(-2n(np)^\ell/\ell!)$. When $\theta-1$ is even, the second 
probability goes up to about $\exp(-n(np)^\ell/\ell!)$. In either case, if $a$ is large enough,
every copy of $K_n^2$ is $(\theta-1)$-IS, and all we need for spanning is a single $\theta$-IS 
copy of $K_n^2$, which will appear if $\gamma$ is large. On the other hand, take 
any consecutive Hamming planes $\{i,i+1\}\times K_n^2$, and form a new configuration 
on a Hamming plane $K_n'$ in which 
$x\in K_n'$ is occupied if either $(i,x)$ or $(i+1,x)$ is occupied. If such configuration 
does not span $K_n'$ with threshold $\theta-1$, which certainly happens if such configuration 
is not $\ell$-viable, then the original configuration on $\bZ_m\times K_n^2$ never adds an occupied 
point on $\{i,i+1\}\times K_n^2$ even if all other points become occupied. 
Existence of such a ``blocking pair'' is guaranteed if $a$ is small enough. 

The precise sufficient and necessary conditions for spanning, which yield the correct critical constant in~\eqref{form-p}, are given in Lemmas~\ref{sufficient-condition} and~\ref{necessary-condition}.  To estimate the probabilities that these conditions are met, we need to carefully estimate the probabilities that a single Hamming plane has various internal properties.

\begin{lemma}\label{single-plane-viability}
If $p$ is given by (\ref{form-p}) and $\ell\geq 2$, then 
$$
\probsub{p}{\text{$K_n^2$ is not $\ell$-viable}}\sim n^{-2a^\ell/\ell!}.
$$
\end{lemma}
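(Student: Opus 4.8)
The plan is to set up the event that a copy of $K_n^2$ is \emph{not} $\ell$-viable as a statement about $2n$ independent lines (the $n$ horizontal and $n$ vertical lines), each of which has $n$ potentially open sites, and then to estimate the probability that every one of these lines contains at most $\ell-1$ open sites. With $p$ as in~\eqref{form-p}, the number of open sites in a fixed line is $\mathrm{Binomial}(n,p)$, and the relevant threshold $\ell$ is fixed while $np = a(\log n)^{1/\ell} n^{-1/\ell}\to 0$; this is precisely a ``few coincidences'' regime, so the binomial is well approximated by a Poisson of the same small mean. Concretely, $\prob{\mathrm{Binomial}(n,p)\le \ell-1} = 1 - \prob{\mathrm{Binomial}(n,p)\ge \ell}$, and the dominant term of the upper tail is $\binom{n}{\ell}p^\ell(1-p)^{n-\ell}\sim \frac{(np)^\ell}{\ell!}$, since $(np)^\ell = a^\ell (\log n)/n \to 0$ and all higher-order terms are smaller by a further factor $(np) \to 0$. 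Hence the probability that a single fixed line is \emph{not} $\ell$-viable (i.e., has fewer than $\ell$ open sites) is $1 - \frac{(np)^\ell}{\ell!}(1+o(1)) = \exp\!\left(-\frac{(np)^\ell}{\ell!}(1+o(1))\right)$.

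Next I would observe that the horizontal lines are mutually independent (they partition the site set), as are the vertical lines; the horizontal and vertical families are of course not independent of each other, but this does not matter for computing the probability that \emph{all} lines fail, because the event ``$K_n^2$ is not $\ell$-viable'' is exactly the intersection over all $2n$ lines of ``this line has $<\ell$ open points,'' and I can bound or compute this directly. The cleanest route: the event is monotone decreasing in $\omega$, so by the Harris/FKG inequality $\prob{\text{not }\ell\text{-viable}} \ge \prod_{\text{all }2n\text{ lines}}\prob{\text{that line has }<\ell\text{ open sites}} = \exp\!\left(-2n\cdot\frac{(np)^\ell}{\ell!}(1+o(1))\right)$, and for the matching upper bound I restrict to just the $n$ horizontal lines (which are independent) and additionally to the $n$ vertical lines—actually the simplest upper bound uses independence of the $n$ horizontal lines together with independence of the $n$ vertical lines given a conditioning, or one simply notes that conditioning on the horizontal line counts only decreases the conditional probability that each vertical line is short, so in fact $\prob{\text{not }\ell\text{-viable}} \le \left(\prob{\text{one line has }<\ell}\right)^{n}$ twice over is not literally true; instead I would use that, since the event factors as an intersection of $2n$ events each measurable with respect to disjoint-but-overlapping coordinate sets, a negative-correlation argument (or simply the upper bound via the $n$ independent horizontal lines and a second-moment check that the vertical constraints do not inflate the probability beyond a $1+o(1)$ factor) gives $\prob{\text{not }\ell\text{-viable}} \le \exp\!\left(-2n\cdot\frac{(np)^\ell}{\ell!}(1+o(1))\right)$ as well. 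Substituting $(np)^\ell = a^\ell(\log n)/n$ yields $\exp(-2n\cdot a^\ell(\log n)/(n\,\ell!)(1+o(1))) = n^{-2a^\ell/\ell!\,(1+o(1))}$, i.e. the claimed $n^{-2a^\ell/\ell!}$.

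The main obstacle is the upper bound, because of the genuine dependence between horizontal and vertical lines through the shared $n^2$ sites. The hypothesis $\ell\ge 2$ is what makes this manageable: when $\ell\ge 2$ a line must contain at least two open sites to be viable, so ``not viable'' for a horizontal line is insensitive to any single site, and the correlation between a horizontal and a vertical line (which share exactly one site) is of lower order than the probabilities themselves. I would make this precise either by a direct inclusion–exclusion over the (at most $\ell-1$)-element open subsets of each line—bounding the correction from intersections of a horizontal and a vertical constraint by $O(n^2 p)\cdot(\text{leading term})^2$, which is negligible since $np\to 0$—or, more slickly, by invoking the Harris inequality for the lower bound and a truncated second-moment / Janson-type upper bound for the complementary ($\ell$-viability) event. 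Either way, the engine is the same Poisson heuristic already sketched in the text after~\eqref{form-p}: the expected number of viable lines is $2n\cdot\frac{(np)^\ell}{\ell!}(1+o(1)) = \frac{2a^\ell}{\ell!}\log n$, the viable-line indicators are nearly independent, so the probability of no viable line is $\exp$ of minus this expectation, giving $n^{-2a^\ell/\ell!}$.
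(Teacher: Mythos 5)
Your lower bound is correct and matches the paper: both "not viable" events (all horizontals short, all verticals short) are decreasing, so Harris/FKG gives $\probsub{p}{\text{not $\ell$-viable}}\ge \probsub{p}{H^c}\probsub{p}{V^c}=\probsub{p}{H^c}^2$, and the horizontal lines partition the sites and hence are independent, which gives $\probsub{p}{H^c}\sim n^{-a^\ell/\ell!}$ by the single-line estimate. So far so good.

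The upper bound is where the proposal has a genuine gap, and it also contains an error of direction that muddies the discussion. You write that conditioning on the horizontal line counts ``only decreases the conditional probability that each vertical line is short.'' This is backwards: $\{$all horizontals short$\}$ and $\{$a given vertical is short$\}$ are both decreasing, so by FKG the conditioning \emph{increases} that probability, $\probsub{p}{V^c\mid H^c}\ge \probsub{p}{V^c}$. That is exactly why the upper bound is the hard direction --- one must show that this positive correlation is only a $1+o(1)$ effect. Consequently the ``negative-correlation argument'' you float will not run; there is no negative correlation here. The ``second-moment check'' is also not the right tool: second moments give lower bounds on the probability that at least one viable line exists, not the upper bound on $\probsub{p}{\text{no viable line}}$ that is needed.

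Of the alternatives you list, the Janson-type bound is the one that actually works, and you never carry it out. To make it precise one should apply Janson not to the $2n$ line-viability indicators (which are not of the required form $\{A\subseteq R\}$) but to the family of events $X_A=\{A\text{ fully open}\}$ indexed by all $\ell$-subsets $A$ of a line. Then $\mu=2n\binom n\ell p^\ell=\frac{2a^\ell}{\ell!}\log n\,(1+O(1/n))$ and the correlation term $\Delta$ decomposes into same-line pairs (dominant contribution $\asymp n^{\ell+2}p^{\ell+1}\asymp (np)\log n\to 0$) and cross-orientation pairs through a shared site (contribution $\asymp n^{2\ell}p^{2\ell-1}\asymp (\log n)^{2-1/\ell}/n^{1-1/\ell}\to 0$ for $\ell\ge 2$), so $\Delta\to 0$ and Janson gives $\probsub{p}{\text{not $\ell$-viable}}\le e^{-\mu+\Delta/2}=(1+o(1))n^{-2a^\ell/\ell!}$. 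This is a legitimate alternative to the paper's argument, and arguably simpler: the paper instead conditions on $H^c$, reduces the vertical lines to a birthday problem over the single-occupant rows, and then needs the slightly-supercritical birthday asymptotics of Lemma~\ref{bday-thm} (via the analytic-combinatorics Theorem~\ref{Gar_Thm}), because the relevant intensity is a power of $\log n$ rather than $O(1)$. But as written, your proposal neither executes the Janson computation nor supplies the birthday-problem substitute, and the mis-stated FKG direction plus the inapplicable alternatives leave the upper bound unproved.
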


\begin{proof} 
The probability that a fixed line contains at least $\ell$ open sites is
\begin{equation} \label{spv-eq0}
\frac 1{\ell!}(np)^\ell+\cO(n^{\ell-1}p^\ell+(np)^{\ell+1})=
\frac 1{\ell!}(np)^\ell+\cO((np)^{\ell+1}),
\end{equation}
so the probability of the event $H$ that at least one of $n$ horizontal lines
contains at least $\ell$ open sites satisfies
\begin{equation}\label{spv-eq1}
\probsub{p}{H^c}= \exp\left(-\frac 1{\ell!}n^{\ell+1}p^\ell+\cO(n^{\ell+2}p^{\ell+1})\right)
\sim n^{-a^\ell/\ell!},
\end{equation}
as $n^{\ell+2}p^{\ell+1}$ is $n^{-1/\ell}$ times a power of $\log n$. Conditioned
on $H^c$, the configurations on horizontal lines are independent and the 
conditional probability that any one fixed horizontal line contains exactly one 
open site is bounded below by
$$
q_n=np(1-p)^n=np+\cO((np)^2).
$$
By Lemma~\ref{binomial-ld}, with $\epsilon=C\sqrt{\log n}(nq_n)^{-1/2}$ for a large enough $C$, 
$$
\probsub{p}{\text{fewer than $(1-\epsilon) nq_n$ horizontal lines contain a single 
open site}\,|\, H^c}=o(n^{-a^\ell/\ell!}).
$$
To connect to the birthday problem, focus on these horizontal lines with a single open site. 
The location of the 
open site is uniform on each of these lines, a ``birthday.'' We are looking for 
a {\it vertical\/} line with $\ell$ open sites, which will happen if $\ell$ of these
``birthdays'' coincide. 
Thus, by Lemma~\ref{bday-thm}
\begin{equation}\label{spv-eq2}
\begin{aligned}
&\probsub{p}{\text{no vertical line contains at least $\ell$ open sites}\,|\, H^c}\\
&\le 
(1+o(1))\exp\left(-\frac 1{\ell!}(1-\epsilon)^\ell (nq_n)^\ell/ n^{\ell-1}\right)+
o(n^{-a^\ell/\ell!})\\
&=
(1+o(1))\exp\left(-\frac 1{\ell!}(n^2p)^\ell/n^{\ell-1}+
\cO((\epsilon n^2p+n(np)^2)(n^2p)^{\ell-1})/n^{\ell-1}\right)+
o(n^{-a^\ell/\ell!})\\
&=(1+o(1))\exp\left(
-\frac 1{\ell!}a^\ell\log n+
\cO(\epsilon\log n+np\log n)\right)+
o(n^{-a^\ell/\ell!})\\
&=(1+o(1))n^{-a^\ell/\ell!}.
\end{aligned}
\end{equation}
By FKG inequality, we also have 
$$
\probsub{p}{\text{$K_n^2$ is not $\ell$-viable}}\ge P(H^c)^2
$$
which, together with (\ref{spv-eq1}) and (\ref{spv-eq2}) finishes the proof.
\end{proof}

\begin{lemma}\label{propriety}
Assume that $p$ is given by (\ref{form-p}) and $\ell\geq 1$. 
Then 
\begin{equation}\label{propriety-eq1}
\probsub{p}{\text{$K_n^2$ is not $(\ell-1)$-proper}}=\cO(n^{-L})
\end{equation}
for any $L>0$, while 
\begin{equation}\label{propriety-eq2}
\probsub{p}{\text{$K_n^2$ is not $\ell$-proper}}=o(1).
\end{equation}
\end{lemma}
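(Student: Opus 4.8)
The plan is to reduce both estimates to a first-moment computation followed by a Chernoff-type lower-tail bound, taking advantage of the fact that distinct horizontal (resp.\ vertical) lines inside a single subsquare occupy pairwise disjoint sets of sites, so the numbers of such lines with many open sites are \emph{exactly} binomial. Fix one of the four subsquares; it is a Hamming square of side $n'$ with $n'=\lfloor n/2\rfloor+\mathcal O(1)$ (the parity conventions in the definition of the subsquares only shift $n'$ by an additive constant, which is irrelevant below). For an integer $k\ge 1$ let $q_k=\probsub{p}{\mathrm{Binomial}(n',p)\ge k}$ be the probability that a fixed line of this subsquare contains at least $k$ open sites. Since $np=a(\log n/n)^{1/\ell}\to 0$, the binomial mass function gives
\[
q_k=\frac{(n')^k}{k!}\,p^k\,(1+o(1))=\frac1{2^k\,k!}\,(np)^k\,(1+o(1)),
\]
so the number of horizontal lines of the subsquare with at least $k$ open sites is exactly $\mathrm{Binomial}(n',q_k)$, with mean $\mu_k:=n'q_k\sim\frac1{2^{k+1}k!}\,n(np)^k$; the same holds for vertical lines.

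Next I would specialize to the two relevant values of $k$. For $k=\ell$ one has $n(np)^\ell=a^\ell\log n$, so $\mu_\ell\sim\frac{a^\ell}{2^{\ell+1}\ell!}\log n\to\infty$. For $k=\ell-1$, assuming $\ell\ge 2$, one has $n(np)^{\ell-1}=a^{\ell-1}(\log n)^{(\ell-1)/\ell}\,n^{1/\ell}$, so $\mu_{\ell-1}$ grows like a fixed positive power of $n$ (times a power of $\log n$); when $\ell=1$ the value $k=\ell-1=0$ is degenerate, $q_0=1$, every subsquare has exactly $n'\ge\theta$ lines of each orientation once $n$ is large, and $K_n^2$ is $0$-proper deterministically, so~\eqref{propriety-eq1} is trivial in that case. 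In either nondegenerate case $\mu_k\to\infty$, hence for $n$ large $\epsilon_k:=1-(\theta-1)/\mu_k\in(0,1)$, and Lemma~\ref{binomial-ld} bounds the probability that this subsquare has fewer than $\theta$ horizontal lines with at least $k$ open sites by $\exp(-\mu_k\epsilon_k^2/2)=\exp(-(1+o(1))\mu_k/2)$, and the same bound holds for vertical lines.

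A union bound over the four subsquares and the two orientations (eight events, all of the same order) then gives
\[
\probsub{p}{K_n^2\text{ is not }\ell\text{-proper}}\le 8\exp\!\left(-(1+o(1))\mu_\ell/2\right)=n^{-(1+o(1))a^\ell/(2^{\ell+2}\ell!)}=o(1),
\]
which is~\eqref{propriety-eq2}, and (for $\ell\ge 2$)
\[
\probsub{p}{K_n^2\text{ is not }(\ell-1)\text{-proper}}\le 8\exp\!\left(-(1+o(1))\mu_{\ell-1}/2\right)=o(n^{-L})
\]
for every $L>0$, since $\mu_{\ell-1}$ exceeds a fixed positive power of $n$ for $n$ large and $\exp(-n^c)=o(n^{-L})$ for all $c,L>0$; this is~\eqref{propriety-eq1}.

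Because this is only a first-moment/Chernoff argument, I do not expect a serious obstacle. The two points that need a little care are: (i) observing that inside a single subsquare the relevant line-counts are \emph{exactly} binomial (the lines involve pairwise disjoint sets of sites), so Lemma~\ref{binomial-ld} applies directly with $\epsilon_k$ tending to $1$ and no dependencies to control; and (ii) tracking the two scalings, namely that $n(np)^{\ell-1}$ is polynomially large in $n$ whereas $n(np)^\ell$ is only of order $\log n$ — this is precisely why~\eqref{propriety-eq1} is super-polynomially small while~\eqref{propriety-eq2} is merely $o(1)$ and cannot be strengthened to $\mathcal O(n^{-L})$ for all $L$.
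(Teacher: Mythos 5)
Your proof is correct and follows essentially the same route as the paper: a first-moment estimate for the number of lines in a subsquare with enough open sites (order $n(np)^k$), a Chernoff lower-tail bound via Lemma~\ref{binomial-ld}, and a union bound over the eight subsquare-orientation pairs, with the key distinction being that $n(np)^{\ell-1}\approx n^{1/\ell}(\log n)^{1-1/\ell}$ is polynomially large while $n(np)^{\ell}\approx a^\ell\log n$ is only logarithmic. The only cosmetic differences are that the paper fixes $\epsilon=1/2$ in the Chernoff bound rather than letting $\epsilon\to 1$, and it does not spell out the degenerate $\ell=1$ case for~\eqref{propriety-eq1}, which you handle explicitly.
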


\begin{proof}
Let $G$ be the event that $K_n^2$ is $(\ell-1)$-proper, 
and $G_1$ the event that the top left (say) square contains $\theta$ horizontal 
lines containing at least $\ell-1$ open points.
As $\probsub{p}{G^c}\le 8P(G_1^c)$, we in fact need to get the upper bound 
in (\ref{propriety-eq1}) for $\probsub{p}{G_1^c}$. Arguing as for (\ref{spv-eq0}), 
the expected number of horizontal lines in the top left square containing at 
least $\ell-1$ open points is $\Omega(n(np)^{\ell-1})$. 
Taking $\epsilon=1/2$ in Lemma~\ref{binomial-ld}, we get 
a constant $c>0$ so that 
\begin{equation}\label{propriety-eq3}
\probsub{p}{G_1^c}\le \exp(-cn^{1/\ell}(\log n)^{1-1/\ell}),
\end{equation}
which is clearly enough for (\ref{propriety-eq1}). To prove (\ref{propriety-eq2}), we use 
analogous definitions of $G$ and $G_1$, and then (\ref{propriety-eq3}) is replaced with
\begin{equation}\label{propriety-eq4}
\probsub{p}{G_1^c}\le \exp(-c\log n), 
\end{equation}
which establishes (\ref{propriety-eq2}).
\end{proof}

\begin{lemma}\label{single-plane-odd-IS}
If $p$ is given by (\ref{form-p}) and $\ell\geq 2$, then 
$$
\probsub{p}{\text{$K_n^2$ is not $(2\ell-1)$-IS}}\sim \probsub{p}{\text{$K_n^2$ is  $(2\ell-1)$-II}}\sim n^{-2a^\ell/\ell!}.
$$
\end{lemma}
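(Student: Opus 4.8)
The plan is to establish both asymptotics by squeezing: the event that $K_n^2$ is $(2\ell-1)$-II is contained in the event that $K_n^2$ is not $(2\ell-1)$-IS (an inert plane certainly fails to be internally spanned), so it suffices to prove that the probability of \emph{not} being $(2\ell-1)$-IS is at most $(1+o(1))n^{-2a^\ell/\ell!}$ and that the probability of being $(2\ell-1)$-II is at least $(1+o(1))n^{-2a^\ell/\ell!}$.

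For the \emph{lower bound} on $\probsub{p}{\text{II}}$, I would identify a concrete, probable obstruction to growth. With threshold $2\ell-1$, a newly occupied site needs $2\ell-1$ occupied neighbors, and since neighbors split into a horizontal $K_n$ and a vertical $K_n$, the plane stays inert as long as no horizontal line and no vertical line ever accumulates $\ell$ occupied sites (because $(\ell-1)+(\ell-1)=2\ell-2<2\ell-1$, so with at most $\ell-1$ in each direction no site can flip). In fact if the \emph{initial} configuration has no horizontal and no vertical line with $\ell$ open sites — i.e.\ $K_n^2$ is not $\ell$-viable — then no site can ever become occupied, so the plane is $(2\ell-1)$-II. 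Hence $\probsub{p}{\text{$K_n^2$ is $(2\ell-1)$-II}}\ge\probsub{p}{\text{$K_n^2$ is not $\ell$-viable}}\sim n^{-2a^\ell/\ell!}$ by Lemma~\ref{single-plane-viability}. (Here $\ell\ge 2$ is used so that Lemma~\ref{single-plane-viability} applies.)

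For the \emph{upper bound} on $\probsub{p}{\text{not $(2\ell-1)$-IS}}$, the strategy is to show that whenever $K_n^2$ \emph{is} $\ell$-viable, it is very likely $(2\ell-1)$-IS, and that the exceptional probability is $o(n^{-2a^\ell/\ell!})$. Concretely: condition on $\ell$-viability, so there is, say, a horizontal line $L_0$ with $\ell$ occupied sites. I would argue that $L_0$ together with a $(2\ell-1)$-proper structure of the rest of the plane propagates occupation to all of $K_n^2$. The mechanism: once one horizontal line is fully occupied, every vertical line gains one occupied site ``for free''; combining this with vertical lines that already carry $\ell-1$ initially open sites (guaranteed in abundance by $\ell$-propriety, Lemma~\ref{propriety}) one reaches $\ell$ occupied sites on many verticals, which then fill up; filled verticals feed the remaining horizontals, and a bootstrapping cascade fills the plane. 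The event that this cascade fails, given $\ell$-viability, is contained in the failure of $\ell$-propriety together with some additional rare deviation events; by Lemma~\ref{propriety} the probability of not being $(\ell)$-proper is only $o(1)$, which is not small enough by itself, so the argument must instead be run inside the four subsquares using $(\ell-1)$-propriety (whose failure probability is $\cO(n^{-L})$ for all $L$, hence $o(n^{-2a^\ell/\ell!})$) plus the observation that a single $\ell$-viable line, once occupied, upgrades the relevant $(\ell-1)$-rich lines to $\ell$-rich. The bookkeeping here is the crux: one must verify that a $(2\ell-1)$-proper-type configuration really does span from one seeded line. Putting the pieces together, $\probsub{p}{\text{not $(2\ell-1)$-IS}}\le\probsub{p}{\text{not $\ell$-viable}}+\probsub{p}{\text{not $(\ell-1)$-proper}}+o(n^{-2a^\ell/\ell!})\sim n^{-2a^\ell/\ell!}$, matching the lower bound and completing the squeeze.

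The main obstacle I anticipate is the deterministic spanning lemma in the upper bound: showing that a single $\ell$-viable line, in the presence of enough $(\ell-1)$-rich horizontal and vertical lines, forces the whole plane to be occupied. This requires carefully tracking how occupied lines in one direction promote near-critical lines in the other direction across several rounds of the dynamics, and ensuring the promotion does not stall — likely one wants the seeded line to interact with lines in all four subsquares, which is exactly why the five structural notions (viable, proper, IS, II, inert) were set up with subsquares in the definitions. The probabilistic inputs (Lemmas~\ref{single-plane-viability} and~\ref{propriety}, plus Lemma~\ref{binomial-ld} for the line-count concentration) are already in place, so once the combinatorial propagation is pinned down the error terms fall into line.
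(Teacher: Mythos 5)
Your plan is exactly the paper's proof: the three inclusions $\{\text{not $\ell$-viable}\}\subset\{\text{$(2\ell-1)$-II}\}\subset\{\text{not $(2\ell-1)$-IS}\}\subset\{\text{not $\ell$-viable}\}\cup\{\text{not $(\ell-1)$-proper}\}$, squeezed using Lemma~\ref{single-plane-viability} and the bound (\ref{propriety-eq1}). The paper dismisses the deterministic cascade you flag as ``easy to see,'' and your realization that $(\ell-1)$-propriety (not $\ell$-propriety) is needed to make the error term $o(n^{-2a^\ell/\ell!})$ is precisely the right observation.
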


\begin{proof}  
Again, let $G$ be the event that $K_n^2$ is $(\ell-1)$ proper. It is easy to see that 
$$
\{\text{$K_n^2$ is $\ell$-viable}\}\cap G\subset \{\text{$K_n^2$ is $(2\ell-1)$-IS}\}
\subset \{\text{$K_n^2$ is not $(2\ell-1)$-II}\}\subset \{\text{$K_n^2$ is $\ell$-viable}\},
$$
and then Lemma~\ref{single-plane-viability} and (\ref{propriety-eq1}) finish the proof.
\end{proof}

\begin{lemma}\label{single-plane-even-IS}
If $p$ is given by (\ref{form-p}) and $\ell\geq 2$, then 
$$
\probsub{p}{\text{$K_n^2$ is not $(2\ell)$-IS}}\sim 
\probsub{p}{\text{$K_n^2$ is $(2\ell)$-II}}\sim 2n^{-a^\ell/\ell!}.
$$
\end{lemma}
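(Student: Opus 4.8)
The plan is to squeeze all three probabilities in the statement between $\prob{B}$ and $\prob{B}+o(n^{-a^\ell/\ell!})$, where $B=B_H\cup B_V$ and $B_H$ (resp.\ $B_V$) is the event that no horizontal (resp.\ vertical) line of $K_n^2$ carries $\ge\ell$ open sites. By inclusion--exclusion $\prob{B}=\prob{B_H}+\prob{B_V}-\prob{B_H\cap B_V}$; the computation already made for (\ref{spv-eq1}) gives $\prob{B_H}=\prob{B_V}\sim n^{-a^\ell/\ell!}$ (note $H^c=B_H$ there, and $B_V$ is handled by symmetry), while Lemma~\ref{single-plane-viability} gives $\prob{B_H\cap B_V}=\prob{K_n^2\text{ is not }\ell\text{-viable}}\sim n^{-2a^\ell/\ell!}$, so $\prob{B}\sim 2n^{-a^\ell/\ell!}$. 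I will also use the event $N$ that no line of either orientation carries $\ge\ell+1$ open sites (a union bound gives $\prob{N}\to1$, the expected number of $(\ge\ell+1)$-lines being $\cO(n^{-1/\ell})$ up to a power of $\log n$) and the event $G$ that $K_n^2$ is $(\ell-1)$-proper, for which $\prob{G^c}=\cO(n^{-L})$ for every $L$ by Lemma~\ref{propriety}.

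\emph{Lower bound for $(2\ell)$-II.} On $B\cap N$ every site has at most $\ell-1$ occupied neighbors along one axis and at most $\ell$ along the other, hence at most $2\ell-1<\theta$ in all, so nothing is ever added; thus $B\cap N\subseteq\{K_n^2\text{ is }(2\ell)\text{-II}\}$. Since $B$ and $N$ are both decreasing events, the FKG inequality gives $\prob{K_n^2\text{ is }(2\ell)\text{-II}}\ge\prob{B}\prob{N}\sim 2n^{-a^\ell/\ell!}$.

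\emph{Upper bound for not-$(2\ell)$-IS.} Let $W'$ be the event that some horizontal $\ell$-line and some vertical $\ell$-line meet at a non-open site. The crux is the deterministic inclusion $W'\cap G\subseteq\{K_n^2\text{ is }(2\ell)\text{-IS}\}$: if a horizontal $\ell$-line in row $y_1$ and a vertical $\ell$-line in column $x_1$ meet at a non-open site, that site is occupied at step $1$, which raises the occupied counts of row $y_1$ and column $x_1$ to $\ge\ell+1$; then the abundant supply of rows and columns with $\ge\ell-1$ open sites guaranteed by $(\ell-1)$-propriety, together with these two boosted lines, first fills column $x_1$ and row $y_1$ completely, after which enough rows and columns acquire $\ge\ell$ occupied sites, and once at least $\theta$ rows and at least $\theta$ columns are fully occupied the rest of the square is filled at once. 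Granting this, $\{K_n^2\text{ is not }(2\ell)\text{-IS}\}\subseteq B\cup(B^c\setminus W')\cup G^c$, and it remains only to bound the ``bad'' event $B^c\setminus W'$ — that both orientations contain an $\ell$-line but every such crossing happens to be open. Once $\prob{B^c\setminus W'}=o(n^{-a^\ell/\ell!})$ is established we obtain $\prob{K_n^2\text{ is not }(2\ell)\text{-IS}}\le\prob{B}+o(n^{-a^\ell/\ell!})+\cO(n^{-L})\sim 2n^{-a^\ell/\ell!}$; and since $(2\ell)$-II forces not-$(2\ell)$-IS except when $K_n^2$ is initially fully occupied (probability $p^{n^2}=o(n^{-a^\ell/\ell!})$), the $(2\ell)$-II probability is squeezed to $\sim 2n^{-a^\ell/\ell!}$ as well, completing the proof.

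The main obstacle is the estimate $\prob{B^c\setminus W'}=o(n^{-a^\ell/\ell!})$. On this event, fixing any one vertical $\ell$-line with open-row set $R$ forces the row of every horizontal $\ell$-line to lie in $R$, so there are at most $|R|$ horizontal $\ell$-lines, and symmetrically few vertical ones; since the number of horizontal $\ell$-lines is approximately Poisson with mean $\mu=(a^\ell/\ell!)\log n\to\infty$, and likewise for vertical ones, the probability of having at least one but only boundedly many of each is $n^{-2a^\ell/\ell!}$ up to powers of $\log n$, hence $o(n^{-a^\ell/\ell!})$ — the near-independence of the two orientations being controllable exactly as in the proof of Lemma~\ref{single-plane-viability}. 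The one extra nuisance is that ``boundedly many'' is bounded only by the maximal number of open sites on a line rather than by a fixed constant, which is dealt with by summing the rapidly decaying tail bounds $\prob{\text{some line has }\ge\ell+t\text{ open sites}}=\cO(n^{-t/\ell})$ (up to powers of $\log n$) over $t\ge1$.
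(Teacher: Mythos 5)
Your overall skeleton parallels the paper closely: the lower bound on $(2\ell)$-II via $B\cap N\subset\{(2\ell)\text{-II}\}$ and FKG is identical in substance to the paper's $(H^c\cup V^c)\cap F^c\subset\{(2\ell)\text{-II}\}$, and the use of $(\ell-1)$-propriety together with a deterministic inclusion into $\{(2\ell)\text{-IS}\}$ also matches. Where you genuinely deviate is in the choice of sufficient event for internal spanning. The paper works with the disjoint occurrence $H\circ V$ and shows $(H\circ V)\cap G\subset\{(2\ell)\text{-IS}\}$, then lower-bounds $\probsub{p}{H\circ V}$ by decomposing $H=\bigcup_k H_k$ on the index $k$ of the \emph{first} horizontal $\ell$-line and estimating $\probsub{p}{V_k\mid H_k}$; this conditioning is on a concrete event about one row, after which the remaining rows are conditionally independent and the birthday-problem argument of Lemma~\ref{single-plane-viability} goes through verbatim. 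Your event $W'$ (a horizontal and a vertical $\ell$-line crossing at a closed site) is \emph{strictly smaller} than $H\circ V$, so your sufficient condition is weaker and you must pay for it by proving $\probsub{p}{B^c\setminus W'}=o(n^{-a^\ell/\ell!})$ from scratch.

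That estimate is where your write-up has a real gap. Two specific problems: (i) the ``summing rapidly decaying tail bounds $\cO(n^{-t/\ell})$ over $t\ge1$'' does not on its own give anything better than $\cO(n^{-1/\ell})$ up to polylogs, which is \emph{not} $o(n^{-a^\ell/\ell!})$ once $a^\ell>\ell!/\ell$ — exactly the regime relevant to Lemma~\ref{odd-theta-supercr}; the extra factor $n^{-a^\ell/\ell!}$ has to come from the constraint that the rows outside $R$ carry fewer than $\ell$ open sites, and this has to be made explicit. (ii) The ``near-independence'' you invoke for $\probsub{p}{1\le N_H\le M_0,\,1\le N_V\le M_0}$ is not what the proof of Lemma~\ref{single-plane-viability} gives you: there the conditioning is on the \emph{decreasing} event $H^c$, where FKG and the birthday lemma combine cleanly, whereas $\{1\le N_H\le M_0\}$ is neither increasing nor decreasing, and to the extent FKG applies to $\{N_H\le M_0\}\cap\{N_V\le M_0\}$ (both decreasing) it points the \emph{wrong way}, giving a lower bound for the joint probability rather than the upper bound you need. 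Your inequality can be salvaged — for instance by conditioning on the leftmost vertical $\ell$-line together with its open-row set $R$ and the event that all other rows have fewer than $\ell$ open sites, which produces $\cO\bigl(n^{-a^\ell/\ell!}\cdot n(np)^{2\ell-1}\bigr)=\cO\bigl(n^{-a^\ell/\ell!-(\ell-1)/\ell}\bigr)$ up to polylogs — but as written the step is asserted, not proved, and that is precisely the step the paper's $H\circ V$/$H_k$ decomposition is designed to avoid.
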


\begin{proof} 
Let $H$ (resp., $V$) be the event that at least one horizontal (resp., vertical) line contains 
at least $\ell$ open sites. Yet again, let $G$ be the even that $K_n^2$ is 
$(\ell-1)$-proper. Further, let $F$ be the event
that there exists a line with at least $\ell+1$ open sites.  By the same reasoning as for~\eqref{spv-eq0}, the probability that a fixed line contains at least $\ell+1$ open sites is
$\cO((np)^{\ell+1})$, so
$$
\probsub{p}{F} =\cO(n(np)^{\ell+1}) = \cO(n^{-1/\ell} (\log n)^{1+1/\ell}).
$$
Then 
$$
(H^c\cup V^c)\cap F^c \subset \{\text{$K_n^2$ is $(2\ell)$-II}\},
$$
so by the FKG inequality
\begin{equation}\label{spe-eq1}
\begin{aligned}
\probsub{p}{\text{$K_n^2$ is $(2\ell)$-II}}
&\ge 
\probsub{p}{H^c\cup V^c}\,\probsub{p}{F^c}\\
&=(1-o(1))\,\probsub{p}{H^c\cup V^c}\\
&=
(1-o(1))\,(2\probsub{p}{H^c}-\probsub{p}{H^c\cap V^c})
\\
&= (1-o(1))\,2\probsub{p}{H^c},
\end{aligned}
\end{equation}
by Lemma~\ref{single-plane-viability} and equation~\eqref{spv-eq1}, since $H^c \cap V^c = \{K_n^2 \text{ is not $\ell$-viable}\}$.

On the other hand, with $\circ$ denoting disjoint occurrence, 
$$
(H\circ V)\cap G \subset \{\text{$K_n^2$ is $(2\ell)$-IS}\}
$$
and, by FKG inequality and (\ref{propriety-eq1}),
\begin{equation}\label{spe-eq1.5}
\probsub{p}{\text{$K_n^2$ is $(2\ell)$-IS}}\ge \probsub{p}{H\circ V}\probsub{p}{G}=
\probsub{p}{H\circ V}\, (1-\cO(n^{-L}))
\end{equation}
for any constant $L>0$. To
find an appropriate lower bound for $\probsub{p}{H\circ V}$, 
write $H$ as a disjoint union $H=\cup_{k=1}^n H_k$, where $H_k$ 
is the event that, counted from the top, the $k$th horizontal line 
is the first to contain at least $\ell$ initially open sites. Let $V_k$ be the event that 
there exists a vertical line that contains at least $\ell$ open sites outside of the $k$th horizontal line. Then 
\begin{equation}\label{spe-eq2}
\probsub{p}{H\circ V}\ge \sum_{k=1}^n \probsub{p}{H_k} \probsub{p}{V_k\,|\,H_k}
\end{equation}
Given $H_k$, the horizontal lines are independent, the first $k-1$ have configurations 
conditioned on containing at most $\ell-1$ open sites, and those after the $k$th horizontal line have the independent
Bernoulli configuration. 
An identical argument as in equation~\eqref{spv-eq2}, with the same values of $q_n$ and $\epsilon$, shows 
that 
\begin{equation}\label{spe-eq3}
\probsub{p}{V_k\,|\,H_k}\ge 1-(1+o(1))n^{-a^\ell/\ell!},
\end{equation}
where the $o(1)$ is uniform over all $k$. Then, from (\ref{spe-eq2}), 
\begin{equation}\label{spe-eq4}
\probsub{p}{H\circ V}\ge \left(1-(1+o(1))n^{-a^\ell/\ell!}\right) \probsub{p}{H} \ge 1-2(1+o(1))n^{-a^\ell/\ell!},
\end{equation}
which, together with (\ref{spe-eq1.5}), provides the matching bound to (\ref{spe-eq1})
and ends the proof.
\end{proof}

\begin{lemma}\label{single-plane-above-IS}
If $p$ is given by (\ref{form-p}) and $\ell\ge 1$, then 
$$
\probsub{p}{\text{$K_n^2$ is $(2\ell+1)$-IS}}\sim 
\probsub{p}{\text{$K_n^2$ is not $(2\ell+1)$-II}}\sim 
\frac {2a^{\ell+1}}{(\ell+1)!}\cdot \frac{(\log n)^{1+1/\ell}}{n^{1/\ell}}.
$$
\end{lemma}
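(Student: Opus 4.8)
The plan is to show that, in the regime (\ref{form-p}) with $\theta=2\ell+1$, a copy of $K_n^2$ becomes $(2\ell+1)$-internally spanned essentially if and only if it contains a single line with $\ell+1$ open sites, and that the latter event has probability asymptotic to the stated quantity. First I would compute the probability of the ``nucleation'' event $F$ that there exists some horizontal or vertical line with at least $\ell+1$ open sites. By the same binomial count as in (\ref{spv-eq0}), the probability that a fixed line has at least $\ell+1$ open sites is $\frac{1}{(\ell+1)!}(np)^{\ell+1}+\cO((np)^{\ell+2})$; since $(np)^{\ell+1}\to 0$, a union bound over the $2n$ lines gives $\probsub{p}{F}=\frac{2}{(\ell+1)!}n(np)^{\ell+1}+\cO(n(np)^{\ell+2})$, and substituting $np=a(\log n)^{1/\ell}n^{-1/\ell}$ yields $\probsub{p}{F}\sim\frac{2a^{\ell+1}}{(\ell+1)!}(\log n)^{1+1/\ell}n^{-1/\ell}$, which is the claimed asymptotic (and tends to $0$, so the union bound is asymptotically exact by inclusion-exclusion, the pairwise terms being $\cO(n^2(np)^{2\ell+2})=o(\probsub{p}{F})$).

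Next I would establish the sandwich
$$
\{\text{$K_n^2$ is $(2\ell+1)$-IS}\}\subset\{\text{$K_n^2$ is not $(2\ell+1)$-II}\}\subset F,
$$
so that both probabilities in the statement are at most $\probsub{p}{F}(1+o(1))$. The first inclusion is immediate. For the second: with threshold $2\ell+1$, a site $(x,y)$ acquires an occupied neighbor configuration of size $\ge 2\ell+1$ only from its row and column together, i.e.\ if the number of open sites in its row plus the number in its column is at least $2\ell+1$; if no line has $\ell+1$ or more open sites then every row and every column has at most $\ell$, so no site is ever added and the plane is $(2\ell+1)$-II. Hence $\{\text{not }(2\ell+1)\text{-II}\}\subset F$.

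For the matching lower bound I would show $\probsub{p}{F\cap G'}\le\probsub{p}{\text{$K_n^2$ is $(2\ell+1)$-IS}}+o(\probsub{p}{F})$ for a suitable high-probability ``propriety-type'' event $G'$: given a line $\mathcal L_0$ (say horizontal) with $\ell+1$ open sites, together with $G'$ one can run the $(2\ell+1)$-dynamics and fill the whole plane. The mechanism: each vertical line through an open site of $\mathcal L_0$ has $\ge 1$ open site, and if $G'$ guarantees each such vertical line also has $\ell$ further open sites below the row of $\mathcal L_0$ — which happens with probability $1-\cO(n^{-L})$ by the argument of Lemma~\ref{propriety}, conditioned on $F$ via a first-line decomposition as in (\ref{spe-eq2})–(\ref{spe-eq3}) — then $\ell+1$ vertical lines reach $\ell+1$ open sites, giving $\ell+1$ rows with $\ge\ell+1$ open sites intersected in a growing block; an $(\ell+1)\times(\ell+1)$ fully-open subgrid forms and then spreads, since any site adjacent to it sees $\ge(\ell+1)+\ell=2\ell+1$ open neighbors in its row and column. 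Writing $F$ as a disjoint union over which line is the first to reach $\ell+1$ open sites and bounding the conditional probability of the required configuration on the transverse lines by $1-o(1)$ (uniformly), exactly as in the proof of Lemma~\ref{single-plane-even-IS}, gives $\probsub{p}{\text{$K_n^2$ is $(2\ell+1)$-IS}}\ge(1-o(1))\probsub{p}{F}$, completing the chain.

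The main obstacle is the deterministic spreading argument: verifying that a single line with $\ell+1$ open points, plus the ``propriety'' of the transverse lines, really forces $(2\ell+1)$-internal spanning of the \emph{entire} plane (not just a subsquare). One must check that the initial $(\ell+1)\times(\ell+1)$ open block grows to fill all of $\{i\}\times K_n^2$, handling the sites whose row or column meets the block in fewer than $\ell+1$ points, and confirm that the event $G'$ one conditions on genuinely has the stated probability \emph{after} conditioning on the location of the nucleating line — this is where the first-line decomposition and the independence of the remaining rows, as in (\ref{spe-eq2})–(\ref{spe-eq3}), are essential to avoid a circular conditioning argument.
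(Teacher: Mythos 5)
Your upper bound and the inclusion chain $\{\text{$(2\ell+1)$-IS}\}\subset\{\text{not $(2\ell+1)$-II}\}\subset F$ are correct and match the paper's, and your computation of $\probsub{p}{F}\sim \frac{2a^{\ell+1}}{(\ell+1)!}(\log n)^{1+1/\ell}n^{-1/\ell}$ is fine (though the paper uses FKG and BK inequalities rather than inclusion--exclusion to control $\probsub{p}{H\cap V}$; both routes work here).

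The lower bound, however, has a genuine gap. You propose an event $G'$ stipulating that each of the $\ell+1$ vertical lines through the open sites of the nucleating line $\mathcal L_0$ itself has $\ell$ further open points, and you assert $\probsub{p}{G'}=1-\cO(n^{-L})$ by Lemma~\ref{propriety}. This is false: for a single \emph{specific} vertical line, the probability of having $\geq\ell$ open sites is $\Theta((np)^\ell)=\Theta(n^{-1}\log n)$, so the event you describe has probability polynomially small in $n$, not close to $1$. Lemma~\ref{propriety} only guarantees that \emph{some} $\geq\theta$ lines per subsquare (out of $n/2$) have $\geq\ell$ open points; it says nothing about prescribed lines. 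Relatedly, your deterministic spreading step is also under-justified: an $(\ell+1)\times(\ell+1)$ fully open block does not propagate at threshold $2\ell+1$, since a site $(i,j)$ with $i$ a block row but $j$ not a block column sees only $\ell+1$ open neighbors in its row and an uncontrolled (possibly zero) number in its column. You need a much larger seed than an $(\ell+1)\times(\ell+1)$ square.

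The paper avoids both issues by using the \emph{global} $\ell$-proper event $G$: each of the four subsquares contains at least $\theta=2\ell+1$ horizontal and $\theta$ vertical lines with at least $\ell$ open points. This $G$ is a fixed increasing event with $\probsub{p}{G}\to 1$ by~\eqref{propriety-eq2}, so FKG gives $\probsub{p}{(H\cup V)\cap G}\geq\probsub{p}{H\cup V}\probsub{p}{G}\sim 2\probsub{p}{H}$ directly, and the first-line decomposition from~\eqref{spe-eq2}--\eqref{spe-eq3} is not needed here --- that machinery is for the disjoint-occurrence event $H\circ V$ in Lemma~\ref{single-plane-even-IS}, which is not a simple intersection of increasing events. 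The deterministic inclusion $(H\cup V)\cap G\subset\{\text{$(2\ell+1)$-IS}\}$ then works because $L_0$ (say horizontal, in the top half) has $\geq\ell+1$ open sites, so each of the $\geq 2\ell+1$ vertical lines in a far subsquare with $\geq\ell$ open points opens its intersection with $L_0$ (that site sees $\geq(\ell+1)+\ell=2\ell+1$ open neighbors); those columns then have $\geq\ell+1$ open points, which opens their intersections with the $\geq 2\ell+1$ horizontal lines in the subsquare with $\geq\ell$ open points; iterating, entire columns fill in and then the whole plane. The key is that propriety supplies $\Omega(1)$ many transverse lines, not just $\ell+1$ specific ones.
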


\begin{proof} Let now $H$ (resp., $V$) be the event that there exists a horizontal (resp., vertical) line that contains at least
$\ell+1$ open sites. 
We have 
$$
\probsub{p}{H}\sim n\cdot \frac 1{(\ell+1)!} n^{\ell+1}p^{\ell+1}=
\frac {a^{\ell+1}}{(\ell+1)!}\cdot \frac{(\log n)^{1+1/\ell}}{n^{1/\ell}}.
$$
Then, adapting the proof of Lemma 3.3 in \cite{GHPS}, the occurrence of the event
$(H\cap V)\setminus (H\circ V)$ implies that there exists an open site with $2\ell$ additional 
open sites in its Hamming neighborhood, so 
$$
\probsub{p}{(H\cap V)\setminus (H\circ V)}\le n^2\cdot p\cdot n^{2\ell}p^{2\ell}
=\cO(p\cdot (\log n)^2)=o(\probsub{p}{H}^2),
$$
for all $\ell\ge 1$. It then follows from FKG and BK inequalities
that 
$$
\probsub{p}{H\cap V}\sim \probsub{p}{H}^2.
$$ 
Therefore, 
$$
\probsub{p}{\text{$K_n^2$ is not $(2\ell+1)$-II}}\le \probsub{p}{H\cup V} \sim 2\probsub{p}{H}.
$$
On the other hand, let $G$ be the event  that $K_n^2$ is $\ell$-proper. By (\ref{propriety-eq2}),
$P(G)\to 1$. Then 
$$
\probsub{p}{\text{$K_n^2$ is $(2\ell+1)$-IS}}\ge \probsub{p}{(H\cup V)\cap G}\ge 
\probsub{p}{H\cup V}\,\probsub{p}{G}\sim 2\probsub{p}{H},
$$
which establishes the desired asymptotics.
\end{proof}

\begin{lemma}\label{single-plane-below-IS}
If $p$ is given by (\ref{form-p}) and $\ell\geq 1$, then 
$$
\probsub{p}{\text{$K_n^2$ is not $(2\ell-2)$-IS}}=\cO(n^{-L}),
$$
for any constant $L>0$.
\end{lemma}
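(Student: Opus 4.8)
The plan is to exhibit a \emph{deterministic} sufficient condition for $K_n^2$ to be $(2\ell-2)$-IS that holds with probability $1-\cO(n^{-L})$ — namely, $(\ell-1)$-propriety (with the relevant threshold being $\theta=2\ell-2$) — and then to quote (\ref{propriety-eq1}) of Lemma~\ref{propriety}. The case $\ell=1$ is immediate: with threshold $0$ every site is occupied after one step, so the plane is $0$-IS with probability $1$. Henceforth assume $\ell\ge 2$, so $\theta=2\ell-2\ge 2$.

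Assume $K_n^2$ is $(\ell-1)$-proper and fix one subsquare $S$. Propriety provides a set $R^*$ of $2\ell-2$ rows and a set $C^*$ of $2\ell-2$ columns, each of which meets $S$ in at least $\ell-1$ initially open sites. The first step of the argument is that after a single bootstrap step the whole $(2\ell-2)\times(2\ell-2)$ block $R^*\times C^*$ is occupied: if a site $(i,j)$ in this block is not initially open, then its row contributes at least $\ell-1$ open neighbors (the open sites of row $i$ inside $S$) and its column contributes at least $\ell-1$ more (the open sites of column $j$ inside $S$), and these two sets are disjoint since a horizontal and a vertical line through $(i,j)$ meet only at $(i,j)$; hence $(i,j)$ has at least $2\ell-2=\theta$ open neighbors and becomes occupied.

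The second step is to propagate outward. Once $R^*\times C^*$ is fully occupied, each row $i\in R^*$ contains $2\ell-2=\theta$ occupied sites, so every other site of that row becomes occupied, filling $R^*\times[n]$; then each of the $n$ columns contains $|R^*|=2\ell-2=\theta$ occupied sites, so every remaining site of the plane becomes occupied. This proves that $(\ell-1)$-propriety implies $(2\ell-2)$-IS, and combining with (\ref{propriety-eq1}) gives the claim. I do not expect a genuine obstacle: all of the probabilistic work is already packaged in Lemma~\ref{propriety}, and what remains is the short combinatorial cascade above. The only points needing attention are that the row- and column-contributions to a neighbor count are disjoint (so the two bounds of $\ell-1$ genuinely add to $2\ell-2$), and that $\ell-1\ge 1$, which is exactly why the case $\ell=1$ is separated off.
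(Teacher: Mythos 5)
Your proof is correct and takes the same approach as the paper, which simply asserts the inclusion $\{K_n^2 \text{ is } (\ell-1)\text{-proper}\}\subset\{K_n^2 \text{ is } (2\ell-2)\text{-IS}\}$ and then invokes (\ref{propriety-eq1}). You have merely spelled out the row--column cascade the paper calls ``easy to see''; note also that the paper's definition of $(\ell-1)$-proper already supplies $\theta\ge 2\ell$ suitable lines in each direction within each subsquare, more than the $2\ell-2$ your argument uses, so no reinterpretation of the propriety parameter is needed.
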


\begin{proof}
It is easy to see that  
$$\{\text{$K_n^2$ is $(\ell-1)$-proper}\}\subset \{\text{$K_n^2$ is $(2\ell-2)$-IS}\},$$
and so (\ref{propriety-eq1}) finishes the proof.
\end{proof}

\begin{lemma}\label{single-plane-interference}  
If $p$ is given by (\ref{form-p}) and $\ell\geq 1$, then the probability that any fixed copy of 
$K_n^2$ contains a site that has an occupied $\bZ$-neighbor, and at least $\theta-1$ 
occupied $K$-neighbors,
is 
\begin{equation*}
\begin{aligned}
&\begin{cases}
\cO(n^{-1}(\log n)^2) &\theta=2\ell\\
\cO(n^{-1-1/\ell}(\log n)^{2+1/\ell}) & \theta=2\ell+1 
\end{cases}\\
&=o(\probsub{p}{\text{$K_n^2$ is $\theta$-IS}})
\end{aligned}
\end{equation*}
\end{lemma}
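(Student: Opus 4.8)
The plan is a first-moment (union-bound) argument for the two displayed bounds, followed by a comparison with the internal-spanning estimates already established. Fix a copy $\{i\}\times K_n^2$, and suppose it contains a site $v=(i,x,y)$ having an initially open $\bZ$-neighbor and at least $\theta-1$ initially open $K$-neighbors within the copy. Then there is a ``witness'' for this: the site $v$ (at most $n^2$ choices), one of its two $\bZ$-neighbors $(i\pm1,x,y)$ that is open (at most $2$ choices), and a subset of size $\theta-1$ of the $2(n-1)$ $K$-neighbors of $v$ inside the copy that are open ($\binom{2n-2}{\theta-1}$ choices); all $\theta$ of these distinct sites are open with probability $p^\theta$. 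Hence the probability in question is at most
$$
2n^2\binom{2n-2}{\theta-1}p^\theta=\cO\!\bigl(n^2 p\,(np)^{\theta-1}\bigr)=\cO\!\bigl(n\,(np)^\theta\bigr),
$$
using $\binom{2n-2}{\theta-1}\le(2n)^{\theta-1}/(\theta-1)!$ and $p=(np)/n$. (The two factors are in fact independent, since the $\bZ$-neighbors and the $K$-neighbors of $v$ are distinct sites, but independence is not needed for the union bound.)

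Substituting $p$ from (\ref{form-p}) gives $np=a(\log n)^{1/\ell}n^{-1/\ell}$, hence $(np)^\ell=a^\ell(\log n)/n$. Plugging this into $n(np)^\theta$ yields $\cO(n^{-1}(\log n)^2)$ when $\theta=2\ell$ and $\cO(n^{-1-1/\ell}(\log n)^{2+1/\ell})$ when $\theta=2\ell+1$, which are precisely the two claimed bounds.

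For the comparison with $\probsub{p}{\text{$K_n^2$ is $\theta$-IS}}$: when $\theta=2\ell+1$, Lemma~\ref{single-plane-above-IS} gives this probability as $\Theta\bigl(n^{-1/\ell}(\log n)^{1+1/\ell}\bigr)=\Theta\bigl(n(np)^{\ell+1}\bigr)$, so the ratio of our bound to it is $\cO((np)^\ell)=\cO(n^{-1}\log n)=o(1)$. When $\theta=2\ell$ with $\ell\ge2$, Lemma~\ref{single-plane-even-IS} gives $\probsub{p}{\text{$K_n^2$ is $(2\ell)$-IS}}=1-o(1)$, while our bound is $o(1)$. Finally, when $\theta=2$ we have $n^2p=a\log n\to\infty$, and a short argument shows that with high probability there are at least two initially open sites lying on no common horizontal or vertical line, which forces $K_n^2$ to be $2$-IS; so again the IS probability is $1-o(1)$ against an $o(1)$ bound. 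This covers all cases $\ell\ge1$.

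I do not anticipate a real obstacle here: this is a routine first-moment estimate once the witness is set up so that the combinatorial factor contributes only $\cO((np)^{\theta-1})$. The only minor nuisances are keeping the powers of $\log n$ and $n$ straight when substituting (\ref{form-p}) in each parity, and treating the degenerate threshold $\theta=2$ separately, since Lemma~\ref{single-plane-even-IS} is stated only for $\ell\ge2$.
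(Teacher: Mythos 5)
Your proof is correct and follows the same route as the paper's: a union bound giving $\cO(n^2p(np)^{\theta-1})=\cO(n(np)^\theta)$, followed by substitution of (\ref{form-p}) and comparison against the internal-spanning probabilities from Lemmas~\ref{single-plane-even-IS} and~\ref{single-plane-above-IS}. Your separate treatment of $\theta=2$ is in fact slightly more careful than the paper's one-line proof, which cites Lemma~\ref{single-plane-even-IS} even though that lemma is only stated for $\ell\ge 2$; this is harmless in the paper (the lemma is invoked only for odd $\theta$ in Lemma~\ref{odd-theta-subcr}), but your version closes the gap as stated.
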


\begin{proof} The probability in question is $\cO(n^2p(np)^{\theta-1})$, which implies 
the $\cO$ bounds. Then Lemmas~\ref{single-plane-even-IS} and~\ref{single-plane-above-IS}
finish the proof.
\end{proof}

\begin{lemma}\label{single-plane-interference-below}  
If $p$ is given by (\ref{form-p}) and $\ell\geq 1$, then 
then the probability that any fixed copy of 
$K_n^2$ has a site that that has an occupied $\bZ$-neighbor, and at least $\theta-2$ 
occupied $K$-neighbors,
is 
\begin{equation*}
\begin{aligned}
&
\begin{cases}
\cO(n^{-1+1/\ell}(\log n)^{1-1/\ell}) &\theta=2\ell\\
\cO(n^{-1}(\log n)^{2}) & \theta=2\ell+1 
\end{cases}\\
&=o(\probsub{p}{\text{$K_n^2$ is $\theta$-IS}})
\end{aligned}
\end{equation*}
\end{lemma}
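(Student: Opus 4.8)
The plan is to repeat the argument of Lemma~\ref{single-plane-interference}, with $\theta-1$ replaced by $\theta-2$. Fix a copy $\{i\}\times K_n^2$, and for each of its $n^2$ vertices $v=(i,x,y)$ let $E_v$ be the event that $v$ has an occupied $\bZ$-neighbor and at least $\theta-2$ occupied $K$-neighbors. The two requirements defining $E_v$ refer to disjoint sets of vertices --- the $\bZ$-neighbors $(i\pm1,x,y)$ lie in $\{i\pm1\}\times K_n^2$, while the $2(n-1)$ $K$-neighbors lie in $\{i\}\times K_n^2$ --- so under the product measure they are independent, and hence
$$
\probsub{p}{E_v}=\bigl(1-(1-p)^2\bigr)\,\probsub{p}{\mathrm{Binomial}(2(n-1),p)\ge\theta-2}=\cO\bigl(p\,(np)^{\theta-2}\bigr),
$$
because $\theta$ is fixed. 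A union bound over the $n^2$ vertices then gives that the probability in the statement is $\cO\bigl(n^2p\,(np)^{\theta-2}\bigr)$; substituting $p$ from~(\ref{form-p}) and using $np=a(\log n)^{1/\ell}n^{-1/\ell}$ turns this into $\cO\bigl(n^{1-(\theta-1)/\ell}(\log n)^{(\theta-1)/\ell}\bigr)$, which for $\theta=2\ell$ and $\theta=2\ell+1$ yields bounds of the two displayed forms.

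It remains to check that this is $o\bigl(\probsub{p}{K_n^2\text{ is }\theta\text{-IS}}\bigr)$. When $\theta=2\ell$ (and $\ell\ge2$) this is immediate, since the bound above is $o(1)$ while $\probsub{p}{K_n^2\text{ is }2\ell\text{-IS}}\to1$ by Lemma~\ref{single-plane-even-IS}. When $\theta=2\ell+1$, Lemma~\ref{single-plane-above-IS} gives $\probsub{p}{K_n^2\text{ is }(2\ell+1)\text{-IS}}\sim\tfrac{2a^{\ell+1}}{(\ell+1)!}(\log n)^{1+1/\ell}n^{-1/\ell}$, and the ratio of the $\cO(n^{-1}(\log n)^2)$ bound to this quantity is $\cO\bigl(n^{-1+1/\ell}(\log n)^{1-1/\ell}\bigr)$, which is $o(1)$ for $\ell\ge2$.

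The only genuinely delicate point is this last comparison at the smallest values of $\ell$ (for instance $\theta\in\{2,3\}$ with $\ell=1$), where the exponent of $n$ in the $\cO$-bound is no longer strictly negative and the crude union bound no longer beats the $\theta$-IS probability on its own; there one must keep track of the implicit constants, or appeal both to the precise $\theta$-IS asymptotics and to the specific regime in which this lemma is actually invoked. Everything preceding that step is the same routine union bound, with disjointness-of-vertex-sets independence, as in the proof of Lemma~\ref{single-plane-interference}.
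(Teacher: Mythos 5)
Your proof is essentially the same as the paper's: the paper's proof is literally the one-line observation that the probability is $\cO\bigl(1/(np)\bigr)$ times the bound $\cO\bigl(n^2p(np)^{\theta-1}\bigr)$ from Lemma~\ref{single-plane-interference}, which is itself exactly the union bound you wrote out. Two small points are worth flagging. First, for $\theta=2\ell$ your substitution gives $\cO\bigl(n^{-1+1/\ell}(\log n)^{2-1/\ell}\bigr)$, not the $\cO\bigl(n^{-1+1/\ell}(\log n)^{1-1/\ell}\bigr)$ displayed in the statement; the exponent of $n$ matches, but the exponent of $\log n$ is off by one, and a direct check of $n^2p(np)^{2\ell-2}$ with $p$ as in (\ref{form-p}) confirms your figure is the correct one (the statement appears to contain a typo). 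This does not affect anything downstream, since both expressions are $o(1)$ once $\ell\ge2$. Second, your caution about the final comparison at small $\ell$ is exactly right: for $\theta=3$ (so $\ell=1$) the $\cO(n^{-1}(\log n)^2)$ bound is only of the \emph{same} order as $\probsub{p}{K_n^2\text{ is }3\text{-IS}}$ from Lemma~\ref{single-plane-3-IS}, so the $o(\cdot)$ conclusion of the lemma fails there; the paper acknowledges precisely this in the proof of Lemma~\ref{theta-3}, where it notes the probability is merely $\cO\bigl(\probsub{p}{K_n^2\text{ is }3\text{-IS}}\bigr)$ and adapts the downstream argument accordingly. For $\theta=2$ (also $\ell=1$) the event degenerates to "some site has an occupied $\bZ$-neighbor," which has probability tending to $1$, so the lemma certainly is not meant to cover that case; the paper treats $\theta=2$ entirely separately.
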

 
\begin{proof} The probability in question is $\cO(1/(np))$ times the one in 
previous lemma.

\end{proof}

\section{Spanning: sufficient condition}\label{sec-sufficient}
 
\begin{lemma}\label{sufficient-condition}
Assume the following three conditions are satisfied:
\begin{enumerate}
\item[(1)] Every copy of $K_n^2$ is $(\theta-2)$-IS.
\item[(2)] Between any pair (if any) of copies of $K_n^2$ that are not $(\theta-1)$-IS
there is a  copy of $K_n^2$ that is  $\theta$-IS.
\item[(3)] There is at least one $\theta$-IS copy of $K_n^2$.
\end{enumerate}
Then $\bZ_m\times K_n^2$ is spanned.
\end{lemma}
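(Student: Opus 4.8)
The plan is to exploit the product structure of $\bZ_m\times K_n^2$: each site $(i,x)$ has exactly the two $\bZ$-neighbors $(i\pm 1,x)$ together with its $2(n-1)$ Hamming neighbors inside the copy $\{i\}\times K_n^2$. Hence if $k\in\{0,1,2\}$ of the two copies $\{i-1\}\times K_n^2$, $\{i+1\}\times K_n^2$ are fully occupied, then every site of $\{i\}\times K_n^2$ permanently carries $k$ occupied $\bZ$-neighbors, so from that time on the dynamics restricted to $\{i\}\times K_n^2$ dominates, pointwise and by an induction on the number of update steps, bootstrap percolation on $K_n^2$ with threshold $\theta-k$. Combining this with the trivial monotonicity ``$j$-IS $\Rightarrow j'$-IS for $j'\le j$'' (lowering the threshold only accelerates growth), I would first record three facts: (A) a $\theta$-IS copy becomes fully occupied with no outside help; (B) a $(\theta-1)$-IS copy with at least one fully occupied $\bZ$-neighboring copy becomes fully occupied; (C) a $(\theta-2)$-IS copy with both $\bZ$-neighboring copies fully occupied becomes fully occupied.

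Next I would set up the bookkeeping on the cycle. Let $S\subseteq\bZ_m$ be the set of $i$ for which $\{i\}\times K_n^2$ is eventually fully occupied; the goal is $S=\bZ_m$. Call a copy \emph{bad} if it fails to be $(\theta-1)$-IS, and let $B\subseteq\bZ_m$ be the (possibly empty) set of bad positions. Condition (3) and Fact (A) give $S\neq\emptyset$; Fact (B) shows that if $i\in S$ and one of $i\pm 1$ lies outside $B$, then that neighbor also lies in $S$. Thus $S$ absorbs every maximal arc of non-bad copies that it meets. In particular, starting from the $\theta$-IS copy furnished by (3): if $B=\emptyset$ the whole cycle is one such arc and $S=\bZ_m$ at once.

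When $B\neq\emptyset$, write $r=|B|$; then $B$ partitions $\bZ_m\setminus B$ into $r$ arcs (``gaps''). Condition (2), read as ``no two bad copies are adjacent and every gap between consecutive bad copies contains a $\theta$-IS copy'' when $r\ge 2$, together with condition (3) supplying a $\theta$-IS copy in the single gap when $r=1$, ensures that \emph{every} gap contains a $\theta$-IS copy. Applying Fact (A) to that copy and then propagating along the gap by Fact (B) places each entire gap in $S$; hence $\bZ_m\setminus B\subseteq S$, i.e. every non-bad copy is full. Finally, since bad copies are isolated, each $b\in B$ now has both its $\bZ$-neighboring copies full, and by condition (1) the copy $\{b\}\times K_n^2$ is $(\theta-2)$-IS, so Fact (C) yields $b\in S$. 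Therefore $S=\bZ_m$, i.e. $\bZ_m\times K_n^2$ is spanned.

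The genuinely routine ingredients are the pointwise-domination induction behind (A)--(C) and the threshold-monotonicity of the IS property. The only point requiring real care is the combinatorial case analysis around $B$: extracting from condition (2) exactly the statement that bad copies are isolated and that each gap between them carries a $\theta$-IS copy, so that the filled region can never become stranded between two bad copies, and disposing separately of the degenerate cases $|B|\le 1$ (and, to be scrupulous, of very small $m$). I expect this bookkeeping, rather than any estimate, to be the main obstacle.
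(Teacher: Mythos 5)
Your proof is correct and is essentially the paper's argument, merely recast constructively rather than by contradiction: both hinge on the same three domination facts (a $\theta$-IS plane fills unaided, a $(\theta-1)$-IS plane fills with one full $\bZ$-neighbor, a $(\theta-2)$-IS plane fills with two) and the same reading of condition (2) as ensuring non-$(\theta-1)$-IS planes are isolated with a $\theta$-IS plane in each gap. The paper compresses this into a short contradiction on a maximal non-full interval, whereas you track the growing set of full planes explicitly, but the underlying observations and case bookkeeping are the same.
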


\begin{proof} Assume that the graph is not spanned. Then, by (3), there 
is a contiguous interval of Hamming planes that are not fully occupied
in the final configuration; by (1), every such interval is of length at least $2$. 
A boundary plane of this 
interval is not $(\theta-1)$-IS. By (2), 
a plane in the interval must be $\theta$-IS, and thus is fully occupied in the final configuration, 
a contradiction.
\end{proof}

\begin{lemma} \label{odd-theta-supercr} Suppose $\theta=2\ell+1$, $\ell\ge 2$, and $p$ is given by (\ref{form-p}).
Assume that $\gamma>1/\ell$  and $a^\ell/\ell!> (\gamma+1/\ell)/2$. 
Then $\probsub{p}{\Span}\to 1.$
\end{lemma}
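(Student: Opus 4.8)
The plan is to use Lemma~\ref{sufficient-condition}, so the goal reduces to verifying its three hypotheses hold asymptotically almost surely under the stated assumptions $\gamma>1/\ell$ and $a^\ell/\ell!>(\gamma+1/\ell)/2$. We first record the relevant single-plane probabilities under the scaling~(\ref{form-p}). Since $\theta=2\ell+1$: a copy of $K_n^2$ fails to be $(\theta-2)$-IS $=(2\ell-1)$-IS with probability $\sim n^{-2a^\ell/\ell!}$ by Lemma~\ref{single-plane-odd-IS}; it fails to be $(\theta-1)$-IS $=(2\ell)$-IS with probability $\sim 2n^{-a^\ell/\ell!}$ by Lemma~\ref{single-plane-even-IS}; and it is $\theta$-IS $=(2\ell+1)$-IS with probability $\sim \frac{2a^{\ell+1}}{(\ell+1)!}\cdot\frac{(\log n)^{1+1/\ell}}{n^{1/\ell}}$ by Lemma~\ref{single-plane-above-IS}. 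These events for distinct copies are independent, since each depends only on the initial configuration within its own plane.

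First I would dispatch condition (1). There are $m$ copies of $K_n^2$, and by~(\ref{m-scaling}), $m=n^{\gamma+o(1)}$. A union bound gives
\[
\probsub{p}{\text{some copy is not $(2\ell-1)$-IS}}\le m\cdot (1+o(1))n^{-2a^\ell/\ell!}=n^{\gamma-2a^\ell/\ell!+o(1)},
\]
which tends to $0$ because $2a^\ell/\ell!>\gamma+1/\ell>\gamma$. Next, condition (3): the expected number of $\theta$-IS copies is $m$ times the probability in Lemma~\ref{single-plane-above-IS}, which is $n^{\gamma-1/\ell+o(1)}\to\infty$ since $\gamma>1/\ell$; a second-moment argument (the events being independent across planes) then shows at least one $\theta$-IS copy exists a.a.s. (Alternatively, one can subsume (3) into the argument for (2) by the same counting.)

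The main work — and the main obstacle — is condition (2): between any two copies that are not $(\theta-1)$-IS, there must be a $\theta$-IS copy. I would argue by the first moment of a "bad gap." Call a copy \emph{weak} if it is not $(2\ell)$-IS (probability $\sim 2n^{-a^\ell/\ell!}$) and \emph{strong} if it is $(2\ell+1)$-IS (probability $\sim c_n:=\frac{2a^{\ell+1}}{(\ell+1)!}\frac{(\log n)^{1+1/\ell}}{n^{1/\ell}}$). Condition (2) fails iff there is a maximal run of non-strong copies containing at least two weak copies. Fix two positions $i<j$ in $\bZ_m$ that are both weak with no strong copy strictly between them; the probability of this, using independence across the $\le m$ planes in between, is at most
\[
\bigl(2n^{-a^\ell/\ell!}(1+o(1))\bigr)^2\,(1-c_n)^{\,j-i-1}.
\]
Summing over all pairs $i<j$ and using $\sum_{j>i}(1-c_n)^{j-i-1}\le 1/c_n$, the expected number of such bad pairs is at most
\[
m\cdot 4n^{-2a^\ell/\ell!}(1+o(1))\cdot\frac1{c_n}=n^{\gamma-2a^\ell/\ell!+1/\ell+o(1)},
\]
which tends to $0$ precisely because $2a^\ell/\ell!>\gamma+1/\ell$. (One should be slightly careful that a "bad pair" in the sense above with strong copies outside $[i,j]$ but a weak copy inside still only strengthens the bound, and that the cyclic structure of $\bZ_m$ only changes constants; I would handle this by noting that condition (2) failing forces the existence of \emph{some} consecutive pair of weak copies with no intervening strong copy, which is exactly what the sum controls, and wrapping around the cycle costs only a factor $2$.) Combining the three vanishing probabilities with Lemma~\ref{sufficient-condition} gives $\probsub{p}{\Span}\to1$. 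The delicate point throughout is that all three error exponents are governed by the single inequality $2a^\ell/\ell!>\gamma+1/\ell$, so the hypothesis is used with no slack, and one must make sure the $m=n^{\gamma+o(1)}$ and the $(\log n)$-power corrections are absorbed into the $o(1)$ in the exponent — which they are, since they are subpolynomial.
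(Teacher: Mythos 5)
Your proof is correct and reduces to Lemma~\ref{sufficient-condition} just as the paper does; the treatment of conditions (1) and (3) is the same, so the only real difference is how condition (2) is verified. The paper introduces the class of \emph{exceptional} planes (those that are $\theta$-IS or not $(\theta-1)$-IS), observes that conditional on being exceptional each plane is ``bad'' with probability $\approx \pbad/\pgood$ independently, controls the number $K$ of exceptional planes via Lemma~\ref{binomial-ld}, and then applies Lemma~\ref{lemma-doubleone} (the exact generating-function formula for avoiding two consecutive successes) to the exceptional subsequence, getting $\exp(-\Theta(m\pbad^2/\pgood))\to 1$. You instead run a direct first-moment bound over potential blocking gaps: for each pair of weak planes with no intervening strong plane you multiply $\pbad^2$ by a geometric factor $(1-\pgood)^{j-i-1}$, and summing gives expected count $O(m\pbad^2/\pgood)\to 0$. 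Both routes hinge on exactly the same quantity $m\pbad^2/\pgood$ and the same arithmetic $2a^\ell/\ell!>\gamma+1/\ell$; your union bound is somewhat more elementary since it avoids the two-stage conditioning on $K$ and the use of Lemma~\ref{lemma-doubleone}. One reason the paper adopts the heavier exceptional-plane machinery is that it is reused in the subcritical Lemma~\ref{odd-theta-subcr}, where one needs a \emph{lower} bound on the probability of a blocking configuration, for which a first-moment upper bound would not suffice; your method is fine for the present (upper-bound) direction. Your caveat about the cyclic structure of $\bZ_m$ (costing only a constant factor, absorbed into the polynomial slack) is the right thing to say and is handled no more carefully in the paper.
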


\begin{proof} Let $G_1$,  (resp., $G_2$, $G_3$) be the event that
condition (1) (resp,  (2), (3)) above is
satisfied. As $\gamma>1/\ell$, $\probsub{p}{G_3}\to 1$ by Lemma~\ref{single-plane-above-IS}. 
Since $2a^\ell / \ell! > \gamma$, Lemma~\ref{single-plane-odd-IS} implies $\probsub{p}{G_1}\to 1$. 
To show that $G_2$ occurs a.~a.~s., let $\pgood$ be the probability 
that $K_n^2$ is $\theta$-IS, and $\pbad$ the probability that 
$K_n^2$ is not $(\theta-1)$-IS.   
Using 
Lemmas~\ref{single-plane-even-IS} and~\ref{single-plane-above-IS}, 
the lower bound $a^\ell/\ell!> (\gamma+1/\ell)/2 > 1/\ell$ implies that 
$\pbad\ll\pgood$ and $ m\pbad^2/\pgood\to 0$. 

Call a Hamming plane {\it exceptional\/} if
it is either $\theta$-IS or not $(\theta-1)$-IS.
Let 
$K$ be the number of exceptional planes. 
Each of these planes is not $(\theta-1)$-IS with probability $\pbad/(\pgood+\pbad)\sim \pbad/\pgood$.
Moreover, by Lemma~\ref{binomial-ld}, 
\begin{equation}
\label{odd-theta-supercr-eq1}
\probsub{p}{{\textstyle\frac 12}\pgood m\le K\le 2\pgood m}\to 1.
\end{equation}
By Lemma~\ref{lemma-doubleone}, for $\frac 12\pgood m\le k\le 2\pgood m$ and $n$ large enough, 
\begin{equation}
\label{odd-theta-supercr-eq2}
\probsub{p}{G_2\,|\,K=k}\ge \exp(-2k\pbad^2/\pgood^2)\ge \exp(- 4 m\pbad^2/\pgood)\to 1. 
\end{equation}
By (\ref{odd-theta-supercr-eq1}) and (\ref{odd-theta-supercr-eq2}), $\probsub{p}{G_2}\to 1$, 
which finishes the proof.
\end{proof}

\begin{lemma} \label{even-theta-supercr}
Suppose $\theta=2\ell$, $\ell\ge 2$, and $p$ is given by (\ref{form-p}).
Assume that $a^\ell/\ell!> \gamma/4$. 
Then $\probsub{p}{\Span}\to 1.$
\end{lemma}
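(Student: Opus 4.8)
The plan is to verify the three conditions of Lemma~\ref{sufficient-condition} with $\theta = 2\ell$, and show each holds a.~a.~s., so that $\probsub{p}{\Span}\to 1$. The structure closely parallels the proof of Lemma~\ref{odd-theta-supercr}, with Lemma~\ref{single-plane-even-IS} now playing the role that Lemma~\ref{single-plane-odd-IS} played before, and the probability $2n^{-a^\ell/\ell!}$ of a plane being $(2\ell)$-II (equivalently, not $(2\ell)$-IS) replacing the much smaller $n^{-2a^\ell/\ell!}$. First I would record the three relevant single-plane probabilities: by Lemma~\ref{single-plane-below-IS}, a copy of $K_n^2$ fails to be $(\theta-2)=(2\ell-2)$-IS with probability $\cO(n^{-L})$ for every $L$; by Lemma~\ref{single-plane-even-IS} (applied to threshold $\theta-1 = 2\ell-1$, i.e. the odd case with the same $\ell$ via Lemma~\ref{single-plane-odd-IS}) a plane fails to be $(\theta-1)$-IS with probability $\sim n^{-2a^\ell/\ell!}$, call this $\pbad$; and by Lemma~\ref{single-plane-even-IS} itself, a plane is $\theta$-IS with probability $\pgood\sim 2n^{-a^\ell/\ell!}$.

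For condition (1): the expected number of planes that are not $(2\ell-2)$-IS is $m\cdot\cO(n^{-L}) = \cO(n^{\gamma}\cdot n^{-L})$, which tends to $0$ once $L>\gamma$; so by a union bound every plane is $(\theta-2)$-IS a.~a.~s. For condition (3): the number of $\theta$-IS planes has mean $m\pgood \sim 2n^{\gamma}\cdot n^{-a^\ell/\ell!}$ (up to log factors from $p$), and since the hypothesis $a^\ell/\ell! > \gamma/4$ does \emph{not} by itself force this to diverge, I need to be a little careful — but in fact the correct reading is that when $a^\ell/\ell!$ is in the supercritical range the expected number of $\theta$-IS planes does tend to infinity, and a second-moment (or Poissonization) argument gives $\probsub{p}{G_3}\to1$; I would check the exact exponent $\gamma - a^\ell/\ell!$ against the constant in the statement of Theorem~\ref{K2-theorem}, equation~\eqref{K2-theorem-eq3}, namely $p_c\sim(\frac14\gamma\ell!)^{1/\ell}(\log n)^{1/\ell}n^{-1-1/\ell}$, which corresponds to $a^\ell/\ell! = \gamma/4$ — so $a^\ell/\ell!>\gamma/4$ is exactly the condition that makes $m\pgood\to\infty$ after accounting for the $(\log n)$-power discrepancy between $p$ in \eqref{form-p} and the squared-threshold scaling. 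For condition (2): exactly as in Lemma~\ref{odd-theta-supercr}, I would note that $\pbad \ll \pgood$ (since $2a^\ell/\ell! > a^\ell/\ell!$) and that $a^\ell/\ell! > \gamma/4$ forces $m\pbad^2/\pgood \to 0$; then let $K$ be the number of exceptional planes (those that are $\theta$-IS or not $(\theta-1)$-IS), apply Lemma~\ref{binomial-ld} to pin down $K \in [\frac12\pgood m, 2\pgood m]$ a.~a.~s., and apply Lemma~\ref{lemma-doubleone} with $r \sim \pbad/\pgood$ to the $K$ exceptional planes to get $\probsub{p}{G_2\mid K=k}\ge \exp(-\cO(k\pbad^2/\pgood^2)) = \exp(-\cO(m\pbad^2/\pgood))\to1$.

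The main obstacle I anticipate is condition (3), not condition (2): the constant $1/4$ in the hypothesis $a^\ell/\ell! > \gamma/4$ is not the one that appears in the single-plane $\theta$-IS probability (which carries $a^\ell/\ell!$, not $a^\ell/(4\cdot\ell!)$), so the bookkeeping between the scaling of $p$ in \eqref{form-p}, which is tuned to the \emph{odd} threshold $2\ell-1$ with $\ell = \lceil(\theta-1)/2\rceil$, and the genuinely critical scaling for the even threshold $\theta = 2\ell$ (where $\ell$ there equals $\theta/2$, one less than $\lceil(2\ell-1)/2\rceil$... ) must be done explicitly. In particular, when $\theta = 2\ell$ the relevant $\ell$ for the plane-IS events is $\ell$ itself, but \eqref{form-p} uses the exponent $1/\ell$ with $\ell = \lceil(\theta-1)/2\rceil = \ell$; I would double-check that these coincide and that the factor $\gamma/4$ emerges correctly from comparing $n^{-a^\ell/\ell!}$ (per-plane non-IS probability) against $m^{-1} \sim n^{-\gamma}$ — the condition $m\pgood\to\infty$ reads $\gamma > a^\ell/\ell!$, which is the \emph{opposite} inequality, so I must instead be requiring that the blocking events are suppressed, i.e. that $m\pbad \to 0$, which reads $\gamma < 2a^\ell/\ell!$, i.e. $a^\ell/\ell! > \gamma/2$ — so I would reconcile the stated $\gamma/4$ with this, which suggests the relevant $\ell$ in the even case is shifted and the exponent is really $\ell+1$ or the constant absorbs a factor of $2$; resolving this indexing is the crux, after which the three-condition verification is routine given the earlier lemmas.
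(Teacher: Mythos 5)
Your overall strategy — verify the three conditions of Lemma~\ref{sufficient-condition} by repeating the machinery of Lemma~\ref{odd-theta-supercr}, with the single-plane lemmas appropriately swapped — is exactly what the paper does. However, you have misread Lemma~\ref{single-plane-even-IS}, and this error propagates through your bookkeeping and leads you to worry about the wrong condition.

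Lemma~\ref{single-plane-even-IS} asserts $\probsub{p}{\text{$K_n^2$ is \emph{not} $(2\ell)$-IS}}\sim 2n^{-a^\ell/\ell!}$. Consequently $\pgood=\probsub{p}{\text{$K_n^2$ is $\theta$-IS}}\sim 1-2n^{-a^\ell/\ell!}\to 1$; you have instead taken $\pgood\sim 2n^{-a^\ell/\ell!}$, which is $1-\pgood$, not $\pgood$. Once this is corrected, condition (3) is trivial: as the paper notes, it holds a.~a.~s.~as soon as $m\to\infty$, since a single plane is already $\theta$-IS with probability tending to $1$; there is no constant to reconcile and no second-moment argument needed. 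Your worry that $m\pgood\to\infty$ ``fails'' is therefore misplaced. The constant $\gamma/4$ in the hypothesis comes entirely from condition (2), and your $\ell$-indexing is already consistent (for $\theta=2\ell$ one has $\lceil(\theta-1)/2\rceil=\ell$): by Lemma~\ref{single-plane-odd-IS}, $\pbad=\probsub{p}{\text{not }(2\ell-1)\text{-IS}}\sim n^{-2a^\ell/\ell!}$, and the no-two-consecutive-bad-exceptional-planes bound from Lemma~\ref{lemma-doubleone} requires $m\pbad^2/\pgood\sim m\pbad^2\approx n^{\gamma}n^{-4a^\ell/\ell!}\to 0$, i.e.~precisely $a^\ell/\ell!>\gamma/4$. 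Your final speculations about a shift to $\ell+1$ or a hidden factor of $2$ are red herrings; once $\pgood\to 1$ is recognized, the $\gamma/4$ falls out of (2) cleanly and the proof is as short as the paper's.
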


\begin{proof}
Observe that condition (3) now holds a.~a.~s.~as soon as $m\to\infty$, 
due to Lemma~\ref{single-plane-even-IS}. The rest of the proof is similar to
the one for the previous lemma.
\end{proof}

\begin{lemma}\label{almost-all-spanned}
Suppose $\theta = 2\ell + 1$, $\ell\ge 2$, and $p$ is given by (\ref{form-p}).  
Assume $\gamma > 1/\ell$ and $a^\ell/\ell!> 1/\ell$.  Then $\abs{\omega_\infty}/(mn^2) \to 1$ in probability.
\end{lemma}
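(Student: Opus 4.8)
The plan is to show that almost every Hamming plane becomes fully occupied, by a second-moment / first-moment argument on the set of ``bad'' planes that are never completely filled. The setup is the same as in Lemma~\ref{odd-theta-supercr}: set $\pgood = \probsub{p}{\text{$K_n^2$ is $\theta$-IS}}$ and $\pbad = \probsub{p}{\text{$K_n^2$ is not $(\theta-1)$-IS}}$, which by Lemmas~\ref{single-plane-even-IS} and~\ref{single-plane-above-IS} satisfy, under the hypothesis $a^\ell/\ell! > 1/\ell$, the estimates $\pbad \sim 2 n^{-2a^\ell/\ell!}$ (since $\theta - 1 = 2\ell$, so ``not $(\theta-1)$-IS'' means ``not $2\ell$-IS''... wait, $\theta-1 = 2\ell$, $\ell' = \ell$, so $\probsub{p}{\text{not $2\ell$-IS}} \sim 2n^{-a^\ell/\ell!}$) and $\pgood \sim \frac{2a^{\ell+1}}{(\ell+1)!}\,(\log n)^{1+1/\ell} n^{-1/\ell}$. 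Note $\pgood$ is only polylog$/n^{1/\ell}$, so $\pgood \to 0$, while $\pbad = n^{-a^\ell/\ell!}$ up to constants with exponent $a^\ell/\ell! > 1/\ell$; the key comparison is again $\pbad \ll \pgood$ and $m\pbad^2/\pgood \to 0$ (this is exactly where $a^\ell/\ell! > 1/\ell$ and $\gamma > 1/\ell$ are used), but now I do \emph{not} have $m\pbad \to 0$, so some planes will genuinely fail to fill.

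First I would reduce ``plane is not fully occupied in $\omega_\infty$'' to a \emph{local} event. As in Lemma~\ref{sufficient-condition}, condition~(1) (every plane is $(\theta-2)$-IS, which holds a.a.s.\ by Lemma~\ref{single-plane-below-IS}) forces any maximal run of not-fully-occupied planes to have length $\ge 2$ and to have both boundary planes not $(\theta-1)$-IS. Moreover, if a maximal run contained a $\theta$-IS plane it would fill, contradiction; so every not-fully-occupied plane lies in a maximal run of consecutive planes, none of which is $\theta$-IS, bounded by two planes that are not $(\theta-1)$-IS. Hence, letting $N$ be the number of planes \emph{not} fully occupied in $\omega_\infty$, on the a.a.s.\ event where (1) holds we have $N \le \sum_{i} \one[\text{the run through } i \text{ avoids $\theta$-IS and is capped by not-$(\theta-1)$-IS planes}]$. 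Crudely, each not-fully-occupied plane can be charged to the nearest not-$(\theta-1)$-IS plane on its left together with a run to the right consisting of non-$\theta$-IS planes; the length of that run is stochastically dominated by a geometric with success probability $\pgood$, so $\E[N \mid \text{(1)}] \le (2/\pgood)\cdot m\pbad\cdot(1+o(1))$ by a union bound over the $\approx m\pbad$ left endpoints (using that non-$(\theta-1)$-IS and non-$\theta$-IS events on distinct planes are independent, planes being deterministic subgraphs on disjoint vertex sets). Since $m\pbad/\pgood \to 0$ (because $m\pbad^2/\pgood \to 0$ and $\pbad \to 0$, indeed $m\pbad/\pgood = (m\pbad^2/\pgood)/\pbad$ — hmm, that blows up; better: $m \pbad / \pgood \to 0$ follows directly from $a^\ell/\ell! > \max(1/\ell, \ldots)$? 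Let me just say: under $\gamma > 1/\ell$ and $a^\ell/\ell! > 1/\ell$ one checks $m\pbad = n^{\gamma - a^\ell/\ell! + o(1)}$; this need not be $\ll \pgood$. So the correct bound is $\E[N] \le (2/\pgood)\, m\pbad (1+o(1))$ and we want $N/m \to 0$, i.e. $\E[N]/m \le (2\pbad/\pgood)(1+o(1)) \to 0$ since $\pbad \ll \pgood$.) Good — so $\E[N]/m \to 0$.

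Then Markov's inequality gives $N/m \to 0$ in probability on the event that (1) holds, and since that event has probability $\to 1$, we get $N/m \to 0$ unconditionally. Finally, the number of occupied sites satisfies $|\omega_\infty| \ge (m - N) n^2$, so $|\omega_\infty|/(mn^2) \ge 1 - N/m \to 1$ in probability; combined with the trivial upper bound $|\omega_\infty| \le mn^2$ this gives $|\omega_\infty|/(mn^2) \to 1$ in probability, as claimed.

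The main obstacle is making the ``charge each unfilled plane to a nearby not-$(\theta-1)$-IS plane, with a geometric tail for the run length'' argument rigorous and genuinely using independence across planes — one must be careful that the event ``plane $i$ is not fully occupied in the \emph{global} dynamics'' depends on the configuration on many planes, so the reduction to the local events (not $\theta$-IS, not $(\theta-1)$-IS, is $(\theta-2)$-IS) via Lemma~\ref{sufficient-condition}'s argument is essential, and one should also invoke Lemmas~\ref{single-plane-interference} and~\ref{single-plane-interference-below} to rule out cross-plane help inflating the filled set in a way that would \emph{help} us (harmless here) or, more importantly, to ensure the local characterization of runs is tight. I expect the bookkeeping for $\E[N]$ — i.e. the union bound over left endpoints times the geometric expectation $1/\pgood$ — to be the only real content; everything else is a direct appeal to the single-plane lemmas and Lemma~\ref{sufficient-condition}.
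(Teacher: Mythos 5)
Your argument is correct in substance, and it is a genuinely different route from the paper's. The paper proceeds by coarse-graining: it picks $\gamma'\in(1/\ell,\gamma)$ small enough that $a^\ell/\ell! > (\gamma'+1/\ell)/2$, cuts $\bZ_m$ into $m/m'$ segments of length $m'=n^{\gamma'}$, shows each segment $R_i$ is a.a.s.\ internally spanned by re-running the proof of Lemma~\ref{odd-theta-supercr} on $R_i$ (with the extra boundary condition~(4) that the first and last exceptional planes in $R_i$ are $\theta$-IS), and then invokes the law of large numbers for the $m/m'$ independent blocks. You instead do a direct first-moment count of the unfilled planes: on the a.a.s.\ event that conditions (1) and (3) of Lemma~\ref{sufficient-condition} hold, each unfilled plane lies in a maximal run with a not-$(\theta-1)$-IS left endpoint and no $\theta$-IS plane inside, so $\E[N]\le m\pbad\cdot(1+1/\pgood)$, and $\E[N]/m\lesssim \pbad/\pgood\to 0$ precisely because $a^\ell/\ell!>1/\ell$, with $\gamma>1/\ell$ ensuring condition~(3) holds a.a.s.\ so the characterization applies. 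Both are sound; yours avoids the sub-interval/boundary bookkeeping at the price of having to state the charging argument carefully (charge to the left endpoint; use independence across disjoint planes; handle the $o(1)$-probability bad event separately when applying Markov). Two small points to tighten in a final write-up: drop the mid-paragraph detour about $m\pbad/\pgood$ (you correctly land on the fact that only $\pbad/\pgood\to 0$ is needed, but the false starts obscure it), and note explicitly that cross-plane interference can only help fill planes, so it is harmless for the upper bound on $N$ — you flagged this but it deserves one clean sentence rather than a hedge about whether the local characterization is ``tight.''
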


\begin{proof}
Let $\gamma'  \in (1/\ell, \gamma)$ be such that $a^\ell / \ell! > (\gamma' + 1/\ell)/2$, and let 
$m'= n^{\gamma'}$.  Divide the cycle, $\bZ_m$, into $m/m'$ intervals of 
length $m'$ (leaving out any leftover interval of smaller length), 
and identify each interval with a subgraph of $\bZ_m \times (K_n)^2$ in the obvious way.  
Denote these subgraphs by $R_1, \ldots, R_{m/m'}$.  
As in the proof of Lemma~\ref{odd-theta-supercr}, call a Hamming plane {\it exceptional\/} if
it is either $\theta$-IS or not $(\theta-1)$-IS.

Observe that a subgraph, $R_i$, is internally 
spanned if it satisfies conditions (1), (2) and (3) of 
Lemma~\ref{sufficient-condition}, in addition to
\begin{enumerate}
\item[(4)] The first and last  
exceptional Hamming planes in $R_i$ are $\theta$-IS.
\end{enumerate}
If $G_4$ is the event that condition (4) is satisfied, 
then Lemmas~\ref{single-plane-even-IS} and~\ref{single-plane-above-IS} imply 
that $\probsub{p}{G_4} \to 1$.  It now follows from the same proof as 
Lemma~\ref{odd-theta-supercr} that $\probsub{p}{R_i \text{ is internally spanned}} \to 1$.  
Let $N$ be the random number of subgraphs among 
$R_1, \ldots, R_{m/m'}$ that are internally spanned, 
and observe that $\abs{\omega_\infty} \geq m'n^2 N$.  Fix $\epsilon>0$.  We have
\begin{equation*}
\probsub{p}{\frac{\abs{\omega_\infty}}{mn^2} > 1-\epsilon} 
\geq \probsub{p}{N > (1-\epsilon) m /m'} \to 1,
\end{equation*}
which proves the claim.
\end{proof}

\section{Spanning: necessary condition} \label{sec-necessary}

For $i_1,i_1\in \bZ_m$, $i_1\ne i_2$, we say that $[i_1,i_2]$ is a {\it blocking interval\/} if 
all Hamming planes $\{i\}\times K_n^2$, $i_1<i<i_2$, are $\theta$-inert, all 
vertices on $\{i_1\}\times K_n^2$ have at most $\theta-2$ initially open neighbors in $\{i_1,i_1+1\}\times K_n^2$, 
and all 
vertices on $ \{i_2\} \times K_n^2$ have at most $\theta-2$ initially open neighbors in $\{i_2-1,i_2\}\times K_n^2$. 
 
\begin{lemma}\label{necessary-condition} Assume that $\bZ_m\times K_n^2$ is spanned. Then both of the following conditions hold:
\begin{enumerate}
\item[(1)] There is no blocking interval. 
\item[(2)] There is at least one copy of $K_n^2$ that is not $\theta$-inert.
\end{enumerate}
\end{lemma}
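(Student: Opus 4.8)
The plan is to prove the contrapositive of each statement: if condition (1) fails then $\bZ_m \times K_n^2$ is not spanned, and likewise for (2). Condition (2) is the easy one: if every copy of $K_n^2$ is $\theta$-inert, then at time $1$ no site anywhere becomes occupied (even with the help of the $\bZ$-neighbors, since ``$\theta$-inert'' is defined to account for that), so $\omega_1 = \omega_0$ and the process is frozen; unless $p=1$ this fails to span. Hence I will focus on condition (1).

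For condition (1), suppose $[i_1,i_2]$ is a blocking interval. The idea is that the set of planes strictly between $i_1$ and $i_2$ can never acquire an occupied site, even if every site outside the closed interval $[i_1,i_2]\times K_n^2$ (and every already-occupied site inside it) becomes occupied. To make this precise I would run the modified dynamics in which all vertices outside $\{i_1,\dots,i_2\}\times K_n^2$ are declared occupied from time $0$; this only speeds up growth, so it suffices to show this modified process does not fill the interval. The key structural claim is an invariant: at every time $t$, no vertex of $\{i\}\times K_n^2$ for $i_1 < i < i_2$ is occupied, and moreover no vertex of $\{i_1\}\times K_n^2$ has an occupied $\bZ$-neighbor in the $i_1+1$ direction that was not occupied at time $0$ — more carefully, the occupied set restricted to $\{i_1,\dots,i_2\}\times K_n^2$ never grows. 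I prove this by induction on $t$. For a vertex $(i,x)$ with $i_1 < i < i_2$: its $\bZ$-neighbors lie in planes $i-1, i+1$, still inside the open interval (or equal to $i_1$ or $i_2$), and its $K$-neighbors lie in plane $i$; by the inductive hypothesis all of plane $i$ and the interiors are unoccupied, and the contribution from planes $i_1, i_2$ is at most what ``$\theta$-inert'' permits — in fact the definition of $\theta$-inert for the interior planes says precisely that no site on them becomes occupied at time $1$ even with help from neighboring planes, and I would iterate this. For the boundary plane $\{i_1\}\times K_n^2$: a vertex there has at most $\theta-2$ initially open neighbors in $\{i_1,i_1+1\}\times K_n^2$ by the blocking-interval hypothesis, and its remaining neighbors lie in $\{i_1-1\}\times K_n^2$, all of which are occupied in the modified process; but that is only one additional $\bZ$-neighbor, giving at most $\theta-1 < \theta$ occupied neighbors, so it never flips. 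Symmetrically for $\{i_2\}\times K_n^2$.

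The subtlety — and the step I expect to require the most care — is the interaction across the boundary planes over multiple time steps: once the dynamics runs, $\{i_1\}\times K_n^2$ might gain occupied sites from the $i_1-1$ side (which is all-occupied in the modified process), and then feed back into plane $i_1+1$. So the invariant has to be stated for the pair of boundary planes together and maintained jointly: I would argue that plane $\{i_1\}\times K_n^2$ never changes at all, because each of its vertices sees at most $\theta-2$ open neighbors toward $i_1+1$ (by hypothesis, and the interior never grows) plus at most one from the $i_1-1$ side, never reaching $\theta$; hence the occupied set on $\{i_1\}\times K_n^2$ equals $\omega_0$ restricted there for all time, which is exactly what ``$\theta$-inert'' of the interior planes needs as input. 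Once the invariant is established, the interior planes $\{i\}\times K_n^2$, $i_1<i<i_2$, remain unoccupied forever (except at initially open sites, which, being $\theta$-inert, cannot nucleate anything), so $\omega_\infty \not\equiv 1$, contradicting the assumption that $\bZ_m\times K_n^2$ is spanned. This completes the contrapositive and hence the proof.
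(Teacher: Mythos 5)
Your proposal is correct and follows essentially the same route as the paper's own (very brief) proof: for (2) note that a spanned, non-trivially-initialized configuration must grow at time $1$, so some plane is not $\theta$-inert; for (1), freeze $([i_1,i_2]\times K_n^2)^c$ to be fully occupied and argue by induction in time that the occupied set inside $[i_1,i_2]\times K_n^2$ never grows. You correctly identify the only subtlety — maintaining the invariant on the boundary planes so that the interior planes continue to see their initial inputs and hence $\theta$-inertness keeps applying at every step — which the paper leaves implicit with ``no point gets added even if the complement is completely occupied.'' One small slip: for (2), the degenerate exception is $\omega_0\equiv 1$ rather than $p=1$ (this occurs with positive probability for any $p>0$), but this is immaterial for the intended use of the lemma.
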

 
\begin{proof} The necessity of (2) is trivial. Necessity of (1) is also easy, because if (1) fails
and $[i_1,i_2]$ is a blocking interval,  
no point in $[i_1,i_2]\times K_n^2$ gets added even if $([i_1,i_2]\times K_n^2)^c$ is completely 
occupied.
\end{proof}

\begin{lemma}\label{odd-theta-subcr} Suppose $\theta=2\ell+1$, $\ell\ge 2$, 
and $p$ is given by (\ref{form-p}). 
Assume that either $\gamma<1/\ell$, or  
($\gamma>1/\ell$ and $a^\ell/\ell!< (\gamma+1/\ell)/2$). 
Then $\probsub{p}{\Span}\to 0.$
\end{lemma}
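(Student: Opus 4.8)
The plan is to invoke Lemma~\ref{necessary-condition}: it suffices to show that, a.~a.~s., \emph{either} every copy of $K_n^2$ is $\theta$-inert (so condition~(2) fails) \emph{or} a blocking interval exists (so condition~(1) fails). The two hypotheses on $\gamma$ trigger these two mechanisms respectively, so I would split the proof accordingly.

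\emph{The case $\gamma<1/\ell$.} Here I would show that a.~a.~s.\ every copy is $\theta$-inert. A fixed copy fails to be $\theta$-inert only if it is not $(2\ell+1)$-II, or some site has an occupied $\bZ$-neighbor together with at least $\theta-1$ occupied $K$-neighbors, or some site has an occupied $\bZ$-neighbor together with at least $\theta-2$ occupied $K$-neighbors (splitting according to how many of the two $\bZ$-neighbors of a newly occupied site are occupied). By Lemmas~\ref{single-plane-above-IS}, \ref{single-plane-interference} and~\ref{single-plane-interference-below} these events have probabilities $\cO(n^{-1/\ell}(\log n)^{1+1/\ell})$, $\cO(n^{-1-1/\ell}(\log n)^{2+1/\ell})$ and $\cO(n^{-1}(\log n)^{2})$ respectively; since $\ell\ge 2$ the first dominates, so a fixed copy fails to be $\theta$-inert with probability $\cO(n^{-1/\ell}(\log n)^{1+1/\ell})$, and a union bound over the $m=n^{\gamma+o(1)}$ copies gives $\cO(n^{\gamma-1/\ell+o(1)})=o(1)$.

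\emph{The case $\gamma>1/\ell$, $a^\ell/\ell!<(\gamma+1/\ell)/2$.} By Lemma~\ref{single-plane-above-IS} condition~(2) holds a.~a.~s., so the work is to exhibit a blocking interval. Call a plane \emph{admissible} if it is $(2\ell)$-II and, in addition, no vertex on it has both an occupied neighbor in an adjacent plane and at least $\theta-2$ occupied $K$-neighbors; one checks that an admissible plane satisfies the endpoint requirement in the definition of a blocking interval, with respect to either adjacent plane. It is essential here to use $(2\ell)$-II rather than the cruder sufficient condition ``not $\ell$-viable'': by Lemma~\ref{single-plane-even-IS}, the bound $\cO(n^{-1}(\log n)^2)$ of Lemma~\ref{single-plane-interference-below} for the extra-neighbor event, and FKG (both defining events are decreasing), the probability $r$ that a plane is admissible satisfies $r=n^{-a^\ell/\ell!+o(1)}$, and it is the exponent $a^\ell/\ell!$, not $2a^\ell/\ell!$, that yields the stated threshold. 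Let $q=n^{-1/\ell+o(1)}$ be the probability that a plane is not $\theta$-inert (Lemma~\ref{single-plane-above-IS}), fix $D=\lceil 1/q\rceil$, and let $X$ count the intervals $[i_1,i_2]$ with $i_2-i_1\le D$, both endpoints admissible, and every strictly interior plane $\theta$-inert. FKG (all constituent events being decreasing) gives $\E[X]\ge\Omega(m r^2/q)=n^{\gamma-2a^\ell/\ell!+1/\ell+o(1)}$, which tends to $\infty$ precisely when $a^\ell/\ell!<(\gamma+1/\ell)/2$; a second-moment argument then forces $X>0$ a.~a.~s.

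The main obstacle is the second moment, since blocking intervals overlap severely — a single long $\theta$-inert run flanked by admissible planes contains many of the counted intervals. I would split pairs of counted intervals according to whether their $1$-neighborhoods are disjoint (then the two events are independent and contribute at most $\E[X]^2$) or ``close''; for the $\cO(D^2)$ close relative positions one uses that a joint occurrence forces some plane to be simultaneously $\theta$-inert and admissible — an event still of probability $n^{-a^\ell/\ell!+o(1)}$ by Lemmas~\ref{single-plane-even-IS} and~\ref{single-plane-interference-below} — so that the close contribution is $o(\E[X]^2)$, whence $\E[X^2]=(1+o(1))\E[X]^2$ and $\Prob{X>0}\to 1$. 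A cleaner alternative is to partition $\bZ_m$ into $\asymp mq$ widely separated arcs of length $\sim 1/q$ and run the first- and second-moment estimate inside each arc independently. All remaining inputs are the single-plane probabilities of Section~\ref{sec-single} and the definitions of Section~\ref{sec-necessary}.
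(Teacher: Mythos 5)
Your overall structure is sound: you correctly reduce to Lemma~\ref{necessary-condition} and split according to which of its conditions fails. In the case $\gamma<1/\ell$ your union bound over planes, using Lemmas~\ref{single-plane-above-IS}, \ref{single-plane-interference} and \ref{single-plane-interference-below}, is essentially what the paper does (the paper cites only the first two, which already dominate, but your extra term does no harm). In the case $\gamma>1/\ell$ your argument genuinely diverges from the paper's. You construct a blocking interval by a second-moment/Paley--Zygmund count of intervals of length at most $D\asymp 1/q$ with admissible endpoints and $\theta$-inert interior, whereas the paper embeds the configuration in a half-infinite strip and runs an exploration: it conditions on $\xi_{i_1-1}$, uses that the first two consecutive exceptional planes which are $(\theta-1)$-II (positions $I_1<I_2$) have $I_1\le m/2$ a.a.s.\ (via Lemma~\ref{lemma-doubleone}) and $I_2-I_1$ geometric with rate $\pgood+\pbad$, intersects with a decreasing no-interference event $G_{i_1,i_1+k}$, and combines by FKG. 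The paper's route avoids the second moment entirely; your route buys conceptual simplicity at the price of a nontrivial pair-overlap analysis. Checking that analysis: the key facts you need are that admissibility of distinct planes is bounded above by $\pbad$ on disjoint randomness (so $k$ distinct admissible planes have joint probability $\le\pbad^k$), that the inert-run factor is $\Theta(1)$ for intervals of length $O(1/q)$, and then the three dominance conditions $\E[X]\to\infty$, $mr\to\infty$, $mq\to\infty$, all of which follow from $a^\ell/\ell!<(\gamma+1/\ell)/2<\gamma$ and $\gamma>1/\ell$. So the conclusion $\E[X^2]=(1+o(1))\E[X]^2$ does hold, but it requires more case-work than your sketch suggests, and your phrase ``simultaneously $\theta$-inert and admissible'' is a small red herring since admissible already implies $\theta$-inert: the real gain for close pairs is that they contain $\ge 3$ distinct admissible planes in a window of size $O(D)$, and the count of such windows drops enough to beat the loss of one factor of $r$. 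Your ``cleaner alternative'' of independent arcs is not actually cleaner, since when $a^\ell/\ell!>1/\ell$ the per-arc success probability tends to $0$ and one would still need a within-arc second moment or a within-arc first-success argument (the latter being, essentially, the paper's proof).
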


\begin{proof}
If $\gamma<1/\ell$, then (2) fails by Lemmas~\ref{single-plane-above-IS} 
and~\ref{single-plane-interference}. 

This time call a Hamming plane $K_n^2$ {\it exceptional\/} if
it is either not $\theta$-II or it is $(\theta-1)$-II, and let $\pgood$ be the probability 
that $K_n^2$ is not $\theta$-II, and $\pbad$ the probability that 
$K_n^2$ is $(\theta-1)$-II. 

Embed the random configuration on $\bZ_m\times K_n^2$ into a random configuration on 
$\bZ_+\times K_n^2$. For any $i\ge 0$, let $\xi_i$ be the random configuration on $[0,i]\times K_n^2$.
Let $I_1$ and $I_2$ be the smallest (random) indices
of two consecutive exceptional planes that are $(\theta-1)$-II. Fix $i_1\ge 0$ and $k\ge 1$. 
Then, conditioned
on $\xi_{i_1-1}$, the event $\{I_1=i_1, I_2-I_1\le k\}$ is a decreasing
function of the configuration on  $[i_1,\infty)\times K_n^2$.

The key to the argument that follows is the event $G_{i_1,i_1+k}$, that there is no neighboring-plane interference in $[i_1,i_1+k]\times K_n^2$. To be more precise, this is the 
event that no vertex in $\{i_1\}\times K_n^2$ has an open neighbor in $\{i_1+1\}\times K_n^2$
together with at least 
$\theta-2$ open neighbors in $\{i_1\}\times K_n^2$; and that no vertex in $\{j\}\times K_n^2$, 
$i_1+1\le j\le i_1+k$, has an open neighbor in $\{j-1\}\times K_n^2$
together with at least $\theta-2$ open neighbors in $\{j\}\times K_n^2$.
Note that $G_{i_1,i_1+k}$ is also decreasing, and independent of the configuration on 
$\xi_{i_1-1}$. Observe also that 
$$
G_{i_1,i_1+k}\cap \{I_1=i_1, I_2\le i_1+k\}\subset 
\{\text{there is blocking interval in $[i_1,i_1+k]$}\}.
$$

We will denote by $\xi$ 
a generic realization of $\xi_{i_1-1}$, and let $k= \log n/(\pgood+\pbad)$.
Conditioned on $\xi_{i_1-1}=\xi$
and $I_1=i_1$, $I_2-i_1$ 
is a geometric random variable with success probability $\pgood+\pbad$, and 
therefore
\begin{equation}
\label{ots-eq0.5}
\probsub{p}{I_2-i_1>k\,|\,\xi_{i_1-1}=\xi, I_1=i_1}\le e^{-k(\pgood+\pbad)} = n^{-1}.
\end{equation}
 We proceed by a similar argument as in the proof 
of Lemma~\ref{odd-theta-supercr}.  Given $\gamma>1/\ell$ and $a^\ell/\ell!< (\gamma+1/\ell)/2$, if $T$ is the number of exceptional planes in $[0,m/2] \times K_n^2$, then by Lemma~\ref{binomial-ld},
\begin{equation}
\label{ots-eq1}
\probsub{p}{T\ge {\textstyle\frac 14}(\pgood+\pbad) m}\to 1.
\end{equation}
By Lemmas~\ref{lemma-doubleone}, \ref{single-plane-even-IS} and \ref{single-plane-above-IS}, for large enough $n$,
\begin{equation*}
\probsub{p}{I_1> m/2\, |\, T\ge {\textstyle\frac 14}(\pgood+\pbad) m} 
\le \exp\left(-{\textstyle\frac18}m\pbad^2 / (\pgood+\pbad)\right) \to 0,
\end{equation*}
because $m\pbad^2 / \pgood \to \infty$ in the case $\pgood > \pbad$, and $m \pbad \to \infty$ otherwise.  Therefore,
\begin{equation}
\label{ots-eq2}
\probsub{p}{I_1\le m/2} \to 1.
\end{equation}
Also, by Lemma~\ref{single-plane-interference-below}, provided $\ell>1$,
\begin{equation}
\label{ots-eq3}
\probsub{p}{ G_{i_1, i_1+k}^c}=\cO(kn^{-1}(\log n)^2)=o(1),
\end{equation}
as $k=\cO(\log n/\pgood) = \cO(n^{1/\ell})$.
Now, 
\begin{equation}
\label{ots-eq4}
\begin{aligned}
&\probsub{p}{\text{$\bZ_m\times K_n^2$ is not spanned}}\\
&\ge 
\probsub{p}{\bigcup_{i_1\leq m/2} \{I_1= i_1, I_2-i_1\le k, G_{i_1, i_1+k}\}}\\
&=
\sum_{i_1\le m/2}\sum_\xi \probsub{p}{\xi_{i_1-1}=\xi} \cdot
\probsub{p}{I_1=i_1, I_2-i_1\le k, G_{i_1, i_1+k}\,|\,\xi_{i_1-1}=\xi}
\\
&\hspace{-5pt}\overset{\text{FKG}}{\ge}
\sum_{i_1\le m/2}\sum_\xi \probsub{p}{\xi_{i_1-1}=\xi} 
\cdot \probsub{p}{I_1=i_1, I_2-i_1\le k\,|\,\xi_{i_1-1}=\xi}
\cdot \probsub{p}{G_{i_1, i_1+k}}
\\
&\hspace{-6pt}\overset{(\ref{ots-eq3})}{=}(1-o(1))\sum_{i_1\le m/2}\sum_\xi \probsub{p}{\xi_{i_1-1}=\xi} 
\cdot \probsub{p}{I_1=i_1, I_2-i_1\le k\,|\,\xi_{i_1-1}=\xi}
\\
&\hspace{-2pt}=\hspace{4pt}(1-o(1))\sum_{i_1\le m/2}\sum_\xi \probsub{p}{\xi_{i_1-1}=\xi, I_1=i_1} 
\cdot \probsub{p}{ I_2-i_1\le k\,|\,\xi_{i_1-1}=\xi, I_1=i_1}
\\
&\hspace{-6pt}\overset{(\ref{ots-eq0.5})}{=}(1-o(1))\sum_{i_1\le m/2}\sum_\xi \probsub{p}{\xi_{i_1-1}=\xi, I_1=i_1} 
\\
&\hspace{-6pt}\overset{(\ref{ots-eq2})}{=}(1-o(1))\probsub{p}{I_1\le m/2} =1-o(1).
\end{aligned}
\end{equation}
This completes the proof of the lemma.
\end{proof}

\begin{lemma} Suppose $\theta=2\ell$, $\ell\ge 2$, and $p$ is given by (\ref{form-p}). \label{even-theta-subcr}
Assume that $a^\ell/\ell!< \gamma/4$. 
Then $\probsub{p}{\Span}\to 0.$
\end{lemma}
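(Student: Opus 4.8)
\textbf{Proof plan for Lemma~\ref{even-theta-subcr}.}

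The plan is to mirror the structure of the proof of Lemma~\ref{odd-theta-subcr}, but with the roles of the ``II'' and ``IS'' properties adapted to the even threshold $\theta = 2\ell$. The key difference is that condition (2) of Lemma~\ref{necessary-condition} (existence of a non-$\theta$-inert plane) is no longer the binding constraint: by Lemma~\ref{single-plane-even-IS} a copy of $K_n^2$ fails to be $\theta$-II with probability $\sim 2n^{-a^\ell/\ell!}$, so as soon as $m \to \infty$ there is a.~a.~s.~a non-$\theta$-inert plane. Instead, the assumption $a^\ell/\ell! < \gamma/4$ is exactly what forces blocking intervals to exist, violating condition (1). So the entire proof should be devoted to showing that a.~a.~s.~there is a blocking interval.

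First I would set up the exceptional-plane framework: call a Hamming plane \emph{exceptional} if it is not $\theta$-II or it is $(\theta-1)$-II, let $\pgood$ be the probability that $K_n^2$ is not $\theta$-II and $\pbad$ the probability that $K_n^2$ is $(\theta-1)$-II. By Lemma~\ref{single-plane-even-IS}, $\pgood \sim 2n^{-a^\ell/\ell!}$, and by Lemma~\ref{single-plane-above-IS} (applied with $\ell-1$ in place of $\ell$, since $\theta - 1 = 2(\ell-1)+1$), $\pbad \sim \frac{2a^\ell}{\ell!}\cdot\frac{(\log n)^{1/(\ell-1) + 1}}{n^{1/(\ell-1)}}$ — wait, one must be careful: for even $\theta = 2\ell$ the relevant ``$\ell$'' in~(\ref{form-p}) is $\ell$ itself (since $\lceil(\theta-1)/2\rceil = \ell$), and $(\theta-1)$-II means $(2\ell-1)$-II, which is handled by Lemma~\ref{single-plane-odd-IS} giving $\pbad \sim n^{-2a^\ell/\ell!}$. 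Thus $\pbad \ll \pgood$ and, crucially, $m\pgood \to \infty$ while $m\pbad^2/\pgood \to 0$ — here the hypothesis $a^\ell/\ell! < \gamma/4$ is used to guarantee $m\pgood = m\cdot 2n^{-a^\ell/\ell!} \sim 2n^{\gamma - a^\ell/\ell!} \to \infty$ (and also that the stronger requirement $a^\ell/\ell! < \gamma/4$, rather than just $< \gamma$, controls the second-moment term $m\pbad^2/\pgood$). Then I would run the same embedding-into-$\bZ_+$ argument as in Lemma~\ref{odd-theta-subcr}: define $I_1, I_2$ as the indices of the first two consecutive exceptional planes that are $(\theta-1)$-II, take $k = \log n/(\pgood + \pbad)$, introduce the neighboring-plane interference event $G_{i_1, i_1+k}$ (now using ``at most $\theta-2$ open neighbors'' with the even $\theta$), and verify via Lemma~\ref{single-plane-interference-below} that $\probsub{p}{G_{i_1,i_1+k}^c} = o(1)$ — this requires checking $k = \cO(\log n/\pgood)$ times the bound $\cO(n^{-1+1/\ell}(\log n)^{1-1/\ell})$ is $o(1)$, which holds since $\pgood = n^{-a^\ell/\ell!}$ with $a^\ell/\ell! < \gamma/4 < 1$ is not too small, but one should double-check the exponent arithmetic carefully.

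The remaining steps then follow the template of~(\ref{ots-eq1})--(\ref{ots-eq4}) verbatim: use Lemma~\ref{binomial-ld} to show the number $T$ of exceptional planes in $[0,m/2]\times K_n^2$ is $\geq \frac14(\pgood+\pbad)m$ a.~a.~s.; use Lemma~\ref{lemma-doubleone} together with $m\pbad^2/\pgood \to 0$ (the ``no two consecutive $(\theta-1)$-II exceptional planes among $T$ trials'' probability is $\exp(-\Theta(T\pbad^2/(\pgood+\pbad)^2)) = \exp(-\Theta(m\pbad^2/\pgood)) \to 1$, so the complementary event is unlikely, giving $\probsub{p}{I_1 \le m/2} \to 1$); conditioned on $\xi_{i_1-1}$ and $I_1 = i_1$, the gap $I_2 - i_1$ is geometric with parameter $\pgood+\pbad$ so $\probsub{p}{I_2 - i_1 > k \mid \cdots} \le n^{-1}$; and finally the FKG chain of inequalities combining the decreasing events $\{I_1 = i_1, I_2 - i_1 \le k\}$ and $G_{i_1,i_1+k}$ (noting their intersection forces a blocking interval in $[i_1, i_1+k]$) yields $\probsub{p}{\bZ_m\times K_n^2 \text{ not spanned}} = 1 - o(1)$. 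I expect the main obstacle to be purely bookkeeping: getting the correct asymptotics for $\pbad$ in the even case (which Lemma matches $(2\ell-1)$-II — it is Lemma~\ref{single-plane-odd-IS}, not~\ref{single-plane-above-IS}) and verifying that $a^\ell/\ell! < \gamma/4$ (versus the weaker $<\gamma/2$-type bound one might naively expect) is precisely what is needed for $m\pbad^2/\pgood \to 0$ and for the interference estimate~(\ref{ots-eq3})-analogue; there is no genuinely new idea beyond what appears in Lemma~\ref{odd-theta-subcr}, which is presumably why the authors may state it tersely.
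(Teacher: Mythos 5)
Your high-level plan is correct: Lemma~\ref{even-theta-subcr} is proved by mirroring the blocking-interval argument of Lemma~\ref{odd-theta-subcr}, with the exceptional-plane machinery re-indexed for $\theta = 2\ell$; the paper itself says only that the argument is ``similar and somewhat simpler.'' You also correctly identify that $\pbad$ comes from Lemma~\ref{single-plane-odd-IS} (since $\theta - 1 = 2\ell-1$), giving $\pbad \sim n^{-2a^\ell/\ell!}$.

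However, your value of $\pgood$ is wrong, and this error cascades. In the subcritical set-up, $\pgood$ is the probability that a plane is \emph{not} $\theta$-II. Lemma~\ref{single-plane-even-IS} gives $\probsub{p}{K_n^2 \text{ is } (2\ell)\text{-II}} \sim 2n^{-a^\ell/\ell!}$, so the correct value is $\pgood \sim 1 - 2n^{-a^\ell/\ell!} \to 1$, not $\pgood \sim 2n^{-a^\ell/\ell!}$ as you write. Consequences:
(i) $m\pbad^2/\pgood \sim m\pbad^2 \sim n^{\gamma - 4a^\ell/\ell!}$, which under $a^\ell/\ell! < \gamma/4$ tends to $\infty$, not to $0$. (Even with your incorrect $\pgood$, your own arithmetic gives $m\pbad^2/\pgood \sim \tfrac12 n^{\gamma - 3a^\ell/\ell!} \to \infty$; the claim ``$\to 0$'' is simply a sign error.)
(ii) This matters because you then reverse the logic of Lemma~\ref{lemma-doubleone}: to get $\probsub{p}{I_1 \le m/2} \to 1$ you need the probability of ``no two consecutive $(\theta-1)$-II exceptional planes'' to go to $0$, i.e.\ $\exp(-\Theta(m\pbad^2/\pgood)) \to 0$, hence $m\pbad^2/\pgood \to \infty$, which is exactly what (\ref{ots-eq2}) uses. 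You wrote the opposite conclusion.
(iii) With the correct $\pgood \sim 1$, the window length is $k = \log n/(\pgood+\pbad) \sim \log n$, so the interference bound (\ref{ots-eq3}) becomes $\cO(n^{-1+1/\ell}(\log n)^{2-1/\ell}) = o(1)$ for every $\gamma > 0$. Your wrong $\pgood$ gives $k \sim \tfrac12 n^{a^\ell/\ell!}\log n$, under which the interference estimate fails whenever $a^\ell/\ell! \ge 1 - 1/\ell$; your parenthetical ``$a^\ell/\ell! < \gamma/4 < 1$'' would not rescue this, because there is no standing assumption that $\gamma < 4$.

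Once $\pgood \sim 1$ is corrected, all three issues disappear and the even-threshold argument is genuinely \emph{easier} than the odd one — $k$ stays logarithmic so interference control is trivial — which is precisely the simplification the authors allude to.
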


\begin{proof}
The argument is similar to that of the previous lemma, and is somewhat simpler, so we omit the details.
\end{proof}

\begin{lemma}\label{almost-inert}
Suppose $\theta = 2\ell+1$, $\ell\ge 2$, and $p$ is given by (\ref{form-p}).  
Assume that $\gamma\ge 1/\ell$ and $a^\ell / \ell! < 1/\ell$.  
Then $\abs{\omega_\infty} / mn^2 \to 0$ in probability.
\end{lemma}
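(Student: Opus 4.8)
The plan is to reduce the statement to a first-moment estimate on the number of planes that fail to be trapped inside a blocking interval, and then to produce such a blocking interval around a typical plane. Call a plane $\{i\}\times K_n^2$ \emph{trapped} if there is a blocking interval $[i_1,i_2]$ with $i_1<i<i_2$. By the argument in the proof of Lemma~\ref{necessary-condition}, no site strictly inside a blocking interval is ever added, so a trapped plane contributes to $\omega_\infty$ only its initially open sites. Writing $B$ for the random set of trapped planes and $X$ for the total number of initially open sites, this gives the deterministic bound $|\omega_\infty|\le |B^c|\,n^2+X$, and hence $\E|\omega_\infty|/(mn^2)\le \E|B^c|/m+p$. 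Since $p\to0$, it suffices to prove $\prob{\text{plane }i_0\text{ is trapped}}\to1$ for a single plane $i_0$ (all planes are equivalent by the rotation invariance of $\bZ_m$); Markov's inequality then yields $|\omega_\infty|/(mn^2)\to0$ in probability.

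Next I would build the blocking interval. Let $\pbad$ be the probability that a plane is $(\theta-1)$-II and $\pgood$ the probability that it is not $\theta$-II; by Lemmas~\ref{single-plane-even-IS} and~\ref{single-plane-above-IS}, $\pbad\sim 2n^{-a^\ell/\ell!}$ and $\pgood\sim\tfrac{2a^{\ell+1}}{(\ell+1)!}\,n^{-1/\ell}(\log n)^{1+1/\ell}$, so the hypothesis $a^\ell/\ell!<1/\ell$ forces $\pgood\ll\pbad$, and since $a^\ell/\ell!<1/\ell\le\gamma$ we also get $k:=\lceil(\log n)/\pbad\rceil=o(m)$. Work in the arc $W=[i_0-k,i_0+k]\subset\bZ_m$, let $L$ be the largest index in $[i_0-k,i_0-1]$ of a $(\theta-1)$-II plane and $R$ the smallest index in $[i_0+1,i_0+k]$ of one, and consider the event $A=A_1\cap A_2\cap A_3$, where $A_1=\{L\text{ and }R\text{ both exist}\}$, $A_2=\{\text{no plane of }W\text{ is non-}\theta\text{-II}\}$, and $A_3=\{\text{no plane }j\in[i_0-k-1,i_0+k+1]\text{ has a site with an open }\bZ\text{-neighbor and at least }\theta-2\text{ open }K\text{-neighbors}\}$. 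On $A$, $[L,R]$ is a blocking interval with $i_0$ in its interior: a plane $j$ with $L<j<R$ is $\theta$-II (by $A_2$, and by maximality/minimality of $L,R$ it is not $(\theta-1)$-II) and interference-free (by $A_3$), hence $\theta$-inert, since any of its sites has at most $\theta-1$ open $K$-neighbors and, if it has $\ge\theta-2$, then no open $\bZ$-neighbor; and $L$ (resp.\ $R$) is $(\theta-1)$-II and interference-free, which forces every vertex on plane $L$ (resp.\ $R$) to have at most $\theta-2$ open neighbors in $\{L,L+1\}\times K_n^2$ (resp.\ $\{R-1,R\}\times K_n^2$), which is exactly the endpoint condition in the definition of a blocking interval.

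It then remains to bound $\prob{A^c}$ by a union bound: $\prob{A_1^c}\le 2(1-\pbad)^k\le 2n^{-1}$; $\prob{A_2^c}\le(2k+1)\pgood=\cO\!\big(n^{a^\ell/\ell!-1/\ell}(\log n)^{2+1/\ell}\big)$; and, by Lemma~\ref{single-plane-interference-below}, $\prob{A_3^c}\le(2k+3)\cO(n^{-1}(\log n)^2)=\cO\!\big(n^{a^\ell/\ell!-1}(\log n)^{3}\big)$, and all three tend to $0$ because $a^\ell/\ell!<1/\ell\le1/2$. This gives $\prob{\text{plane }i_0\text{ trapped}}\ge\prob{A}\to1$ and completes the proof. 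The step I expect to require the most care is the verification, on the event $A$, that $[L,R]$ literally meets the definition of a blocking interval — the $(\theta-2)$-versus-$(\theta-1)$ bookkeeping of open $K$- and $\bZ$-neighbors at the two endpoints, and the minor nuisance that an endpoint's exterior $\bZ$-neighbor lies just outside $W$ (handled by the one-plane enlargement in the definition of $A_3$). Conceptually, the engine is the inequality $a^\ell/\ell!<1/\ell$, which makes $(\theta-1)$-II planes so much denser than non-$\theta$-II planes that two consecutive $(\theta-1)$-II planes typically enclose no non-$\theta$-II plane and no neighboring-plane interference, so that almost all of $\bZ_m$ is covered by blocking intervals.
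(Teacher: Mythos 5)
Your argument is correct, and it rests on the same underlying mechanism as the paper's: under $a^{\ell}/\ell! < 1/\ell$ the $(\theta-1)$-II planes are so much denser than non-$\theta$-II planes and interference events that almost every plane falls strictly inside a blocking interval formed by two nearby $(\theta-1)$-II planes. The difference is organizational. The paper fixes a deterministic partition of $\bZ_m$ into disjoint intervals $R_1,\dots,R_{m/m'}$ of length $m'=n^{\beta}$, calls $R_i$ ``almost inert'' when it is internally clean and has $(\theta-1)$-II planes near both ends, shows $\probsub{p}{R_i\text{ almost inert}}\to1$, and then sums the contributions of the (few) bad $R_i$'s, the boundary slabs of width $n^{\alpha}$, and $\abs{\omega_0}$. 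You instead run a first-moment computation on the set of trapped planes, using a sliding window $[i_0-k,i_0+k]$ with $k\asymp(\log n)/\pbad$ centered at each plane, and reduce everything to the single estimate $\probsub{p}{i_0\text{ trapped}}\to 1$. Your version dispenses with the auxiliary exponents $\alpha,\beta$ and the bookkeeping of boundary slabs, and makes the reliance on the blocking-interval machinery of Lemma~\ref{necessary-condition} and Lemma~\ref{odd-theta-subcr} explicit; the paper's version has the mild advantage that its building block (``$R_i$ internally spanned'' vs.~``$R_i$ almost inert'') mirrors the dual statement in Lemma~\ref{almost-all-spanned}, making the two halves of Theorem~\ref{almost-span-thm} visibly symmetric. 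Two small remarks: the one-plane enlargement $[i_0-k-1,i_0+k+1]$ in your $A_3$ is harmless but unnecessary, since the endpoint condition for $[L,R]$ only involves the inward $\bZ$-neighbor at plane $L+1$ (resp.~$R-1$), and $\theta$-inertness of interior planes only consults $\bZ$-neighbors inside $W$; and the parenthetical ``by maximality/minimality of $L,R$ it is not $(\theta-1)$-II'' plays no role in the verification and can be dropped.
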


\begin{proof}
Choose constants $\alpha, \beta$ such that $a^\ell/\ell! < \alpha < \beta < 1/\ell$, 
and let $m' = n^\beta$.  As in the proof of Lemma~\ref{almost-all-spanned}, 
divide the cycle $\bZ_m$ into $m/m'$ intervals of length $m'$, 
and denote the resulting subgraphs of $\bZ_m \times K_n^2$ 
by $R_1, \ldots, R_{m/m'}$. Call a site in $R_i$ {\em $\bZ$-assisted} if it has an initially open $\bZ$-neighbor in $R_i$ 
and at least $2\ell-1$ initially open $K$-neighbors

Call a subgraph $R_i$ {\em almost inert} if the following conditions are met.
\begin{enumerate}
\item[(1)] All Hamming planes in $R_i$ are $\theta$-II.
\item[(2)] No sites in $R_i$ are $\bZ$-assisted.
\item[(3)] There is a $(\theta-1)$-II plane among the first $n^\alpha$ Hamming planes and among the last $n^\alpha$ Hamming planes in $R_i$.
\end{enumerate}
If $R_i$ is almost inert, then it contains an interval of Hamming planes of length at least 
$m' - 2n^\alpha = m'(1-o(1))$, in which the initial configuration remains unchanged by the bootstrap dynamics, even if every site outside of the interval becomes open.  The probability that $R_i$ satisfies 
condition (1) converges to $1$ by Lemma~\ref{single-plane-above-IS}, since $\beta<1/\ell$.  
The probability that $R_i$ does not satisfy condition (2) is, 
by Lemma~\ref{single-plane-interference-below}, 
$
\cO(n^{\beta-1}(\log n)^2)=o(1).
$
Finally, the probability that $R_i$ satisfies condition (3) tends to $1$ by 
Lemma~\ref{single-plane-even-IS}.  So we have 
$$\probsub{p}{R_i \text{ is almost inert}} \to 1.$$  
Let $N$ be the number of subgraphs among $R_1, \ldots, R_{m/m'}$ that are {\em not} almost inert, 
and observe that 
$$\abs{\omega_\infty} \leq m'n^2 (N+1)+2n^\alpha (m/m')n^2 + \abs{\omega_0},$$  
and that $m'n^2=o(mn^2)$, $n^\alpha (m/m')n^2=o(mn^2)$.
Fix $\epsilon>0$.  Then, for large enough $n$,  
$$
\probsub{p}{\frac{\abs{\omega_\infty}}{mn^2}>\epsilon} \leq 
\probsub{p}{N > \frac\epsilon3\cdot \frac{m}{m'}} + 
\probsub{p}{\abs{\omega_0} > \frac{\epsilon}{2}\cdot mn^2} \to 0,
$$
where the first term goes to zero because each subgraph $R_1, \ldots, R_{m/m'}$ 
is independently almost inert, and the second term goes to zero because $p\to 0$.  
This completes the proof of the lemma.
\end{proof}

\section{Scaling in the gradual regime}\label{sec-gradual}

Lemma~\ref{odd-theta-subcr} shows that the magnitude of $p$ of the form (\ref{form-p}) is 
too small when $\gamma<1/\ell$ and $\theta=2\ell+1$. In this case, we need to scale $p$ so that 
a $\theta$-IS plane has a chance to appear, so we let
\begin{equation}\label{form-p-grad}
p=a\cdot \frac{1}{n^{1+1/(\ell+1)}m^{1/(\ell+1)}}, 
\end{equation}
where $a>0$ is a constant.

\begin{lemma}\label{single-plane-gradual}
If $p$ is given by (\ref{form-p-grad}),  $\ell\geq 1$, and $\gamma < 1/\ell$, then 
$$
\probsub{p}{\text{$K_n^2$ is $(2\ell+1)$-IS}}\sim 
\probsub{p}{\text{$K_n^2$ is not $(2\ell+1)$-II}}\sim 
\frac {2a^{\ell+1}}{(\ell+1)!}\cdot \frac{1}{m}.
$$
\end{lemma}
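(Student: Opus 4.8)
The target statement, Lemma~\ref{single-plane-gradual}, is the exact analogue of Lemma~\ref{single-plane-above-IS}, but with $p$ now given by the smaller scaling~\eqref{form-p-grad} appropriate to the regime $\gamma<1/\ell$. The plan is to follow the proof of Lemma~\ref{single-plane-above-IS} essentially verbatim, tracking how the new value of $p$ changes the numerical asymptotics. First I would let $H$ (resp.\ $V$) be the event that some horizontal (resp.\ vertical) line contains at least $\ell+1$ open sites. With $p$ given by~\eqref{form-p-grad} one has $np = a n^{-1/(\ell+1)} m^{-1/(\ell+1)}$, so $(np)^{\ell+1} = a^{\ell+1} n^{-1} m^{-1}$, and hence
$$
\probsub{p}{H}\sim n\cdot \frac{1}{(\ell+1)!}(np)^{\ell+1}
=\frac{a^{\ell+1}}{(\ell+1)!}\cdot\frac{1}{m}\to 0,
$$
since $m\to\infty$. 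This is the main term in the claimed asymptotics.

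Next I would reproduce the disjointness estimate: the occurrence of $(H\cap V)\setminus(H\circ V)$ forces an open site with $2\ell$ additional open sites in its Hamming neighborhood, so
$$
\probsub{p}{(H\cap V)\setminus(H\circ V)}\le n^2\cdot p\cdot n^{2\ell}p^{2\ell}
= n^{2}p(np)^{2\ell},
$$
and I would check that this is $o(\probsub{p}{H}^2)$; indeed $\probsub{p}{H}^2$ is of order $m^{-2}$ while $n^{2}p(np)^{2\ell}$ is $n^{2}p\cdot (a^{\ell+1}n^{-1}m^{-1})^{2\ell/(\ell+1)}$, a genuinely smaller quantity once one uses $\gamma<1/\ell$ (this is the one place where the hypothesis $\gamma<1/\ell$, not just $m\to\infty$, is needed, since it controls the size of $np$ relative to $1/m$). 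Consequently, by FKG and BK, $\probsub{p}{H\cap V}\sim\probsub{p}{H}^2$, and therefore
$$
\probsub{p}{\text{$K_n^2$ is not $(2\ell+1)$-II}}\le\probsub{p}{H\cup V}\sim 2\probsub{p}{H}
\sim\frac{2a^{\ell+1}}{(\ell+1)!}\cdot\frac1m .
$$

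For the matching lower bound on $\probsub{p}{\text{$K_n^2$ is $(2\ell+1)$-IS}}$, I would invoke propriety: let $G$ be the event that $K_n^2$ is $\ell$-proper. Here a small subtlety arises — Lemma~\ref{propriety} is stated for $p$ of the form~\eqref{form-p}, whereas we now have the smaller $p$ of~\eqref{form-p-grad}. I would note that the proof of Lemma~\ref{propriety} only uses that the expected number of lines in each subsquare with at least $\ell$ open points tends to infinity; with $p$ from~\eqref{form-p-grad} this expectation is of order $n(np)^\ell = a^\ell n^{1-\ell/(\ell+1)}(m^{-\ell/(\ell+1)})$, which still tends to infinity precisely because $\gamma<1/\ell$ forces $m$ to be a small enough power of $n$ — so $\probsub{p}{G}\to1$ in this regime as well. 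Then $(H\cup V)\cap G\subset\{\text{$K_n^2$ is $(2\ell+1)$-IS}\}$ (an $\ell$-proper plane with one line of $\ell+1$ open sites internally spans with threshold $2\ell+1$), and by FKG
$$
\probsub{p}{\text{$K_n^2$ is $(2\ell+1)$-IS}}\ge\probsub{p}{H\cup V}\,\probsub{p}{G}\sim 2\probsub{p}{H},
$$
which, sandwiched against the upper bound, completes the proof.

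The main obstacle I expect is not conceptual but bookkeeping: verifying that the error term $n^{2}p(np)^{2\ell}$ is $o(m^{-2})$ and that the $\ell$-propriety expectation diverges, both of which require using $\gamma<1/\ell$ (equivalently $m = n^{o(1/\ell)}$ up to the relevant precision) rather than merely $m\to\infty$. I would state the needed inequality $\gamma<1/\ell$ explicitly at the point where it is used, and otherwise the argument is a direct transcription of Lemma~\ref{single-plane-above-IS}'s proof with the new $p$.
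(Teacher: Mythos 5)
Your proposal follows the same route as the paper and the calculations are correct, but it contains a factual error worth flagging: you describe~\eqref{form-p-grad} as the \emph{smaller} scaling, when in fact it gives a \emph{larger} $p$ in this regime. Writing $m\approx n^\gamma$, the exponent of $1/n$ in~\eqref{form-p-grad} is $1+(1+\gamma)/(\ell+1)$, and since $\gamma<1/\ell$ implies $(1+\gamma)/(\ell+1)<1/\ell$, this is a \emph{smaller} exponent than the $1+1/\ell$ appearing in~\eqref{form-p} (the $\log$ factor does not change the comparison). The paper exploits this directly: since propriety is an increasing event and Lemma~\ref{propriety} gives $\probsub{p}{G}\to1$ under the smaller $p$ of~\eqref{form-p}, monotonicity immediately yields $\probsub{p}{G}\to1$ under the larger $p$ of~\eqref{form-p-grad}, with no need to re-run the expectation calculation. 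Your re-derivation of $n(np)^\ell\to\infty$ from $\gamma<1/\ell$ is nevertheless correct, so the lower bound still goes through; you have simply done more work than necessary because of the sign error. A minor second point: you say the disjointness estimate is ``the one place'' where $\gamma<1/\ell$ is needed, which contradicts your own subsequent appeal to $\gamma<1/\ell$ for propriety; in fact the disjointness bound only requires $n^2p\to\infty$, i.e.\ $\gamma<\ell$, which is automatic once $\gamma<1/\ell\le\ell$. The paper records this as ``$\gamma<1/\ell\le1$'', implicitly using $\ell\ge1$. Neither slip affects the validity of your argument.
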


\begin{proof} We emulate the proof of Lemma~\ref{single-plane-above-IS}, with the same 
notation. Now,
$$
\probsub{p}{H}\sim  
\frac {a^{\ell+1}}{(\ell+1)!}\cdot \frac{1}{m}
$$
and  
$$
\probsub{p}{(H\cap V)\setminus (H\circ V)}\le n^2\cdot p\cdot n^{2\ell}p^{2\ell}
=\cO(m^{-2}/(n^2p))=o(\probsub{p}{H}^2),
$$
as $\gamma<1/\ell\le 1$. 
The rest of the proof follows from that of Lemma~\ref{single-plane-above-IS}; note that 
$p$ of the form (\ref{form-p-grad}) is larger than that of the form  
(\ref{form-p}) and thus (\ref{propriety-eq2}) holds.  
\end{proof}

\begin{lemma}\label{span-grad}
Assume that $\theta=2\ell+1$ and $\gamma<1/\ell$, and that $p$ is of the form (\ref{form-p-grad}). 
Then $$\probsub{p}{\Span}\to 1-\exp(-2a^{\ell+1}/(\ell+1)!).$$ 
\end{lemma}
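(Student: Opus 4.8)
The plan is to show that, up to $o(1)$, spanning is equivalent to the presence of at least one $\theta$-IS copy of $K_n^2$. Let $N$ be the number of $\theta$-IS planes. Distinct planes carry disjoint families of i.i.d.\ Bernoulli variables, so $N\sim\mathrm{Binomial}(m,q)$ with $q=\probsub{p}{\text{$K_n^2$ is $(2\ell+1)$-IS}}$, and Lemma~\ref{single-plane-gradual} gives $mq\to\lambda:=2a^{\ell+1}/(\ell+1)!$; hence $\probsub{p}{N=0}=(1-q)^m\to e^{-\lambda}$ and $\probsub{p}{N\ge 1}\to 1-e^{-\lambda}$. It therefore suffices to prove $\probsub{p}{\Span}\ge\probsub{p}{N\ge1}-o(1)$ and $\probsub{p}{\Span}\le\probsub{p}{N\ge1}+o(1)$.

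For the lower bound I first note that $p$ of the form (\ref{form-p-grad}) is large enough that a.a.s.\ \emph{every} plane is $(\theta-1)$-IS. Indeed, $(\theta-1)$-IS is an increasing event, and since $\gamma<1/\ell$ the expression (\ref{form-p-grad}) exceeds $p$ of the form (\ref{form-p}) with any fixed constant once $n$ is large; so by monotonicity together with Lemma~\ref{single-plane-even-IS} (for $\ell\ge2$; a direct, second-moment-free argument handles the easy case $\ell=1$, $\theta-1=2$), one may choose the constant so that $\probsub{p}{\text{$K_n^2$ is not $(\theta-1)$-IS}}=o(1/m)$, and a union bound over the $m$ planes makes $\cG:=\{\text{every plane is $(\theta-1)$-IS}\}$ a.a.s. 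On $\cG$, conditions (1) and (2) of Lemma~\ref{sufficient-condition} hold automatically (every plane is $(\theta-1)$-IS hence $(\theta-2)$-IS, and there is no pair of non-$(\theta-1)$-IS planes), so $\cG\cap\{N\ge1\}\subset\Span$ by Lemma~\ref{sufficient-condition}, and $\probsub{p}{\Span}\ge\probsub{p}{N\ge1}-\probsub{p}{\cG^c}\to1-e^{-\lambda}$.

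For the upper bound I run a ``first site to flip'' argument. Introduce three a.a.s.\ events: $\cG_2$, that every plane which is not $(2\ell+1)$-II is $(2\ell+1)$-IS (a.a.s.\ because, by Lemma~\ref{single-plane-gradual}, $\probsub{p}{\text{not $(2\ell+1)$-II}}$ and $\probsub{p}{\text{$(2\ell+1)$-IS}}$ agree to leading order, so a fixed plane fails this with probability $o(1/m)$ apart from the negligible fully-occupied case); $\cG_3$, that no vertex of $\bZ_m\times K_n^2$ has an open $\bZ$-neighbor together with at least $\theta-1$ open $K$-neighbors (a.a.s.\ since the first-moment bound underlying Lemma~\ref{single-plane-interference} gives $\probsub{p}{\cG_3^c}=\cO(mn^2p(np)^{\theta-1})=o(1)$ in this range); and $\cG_4$, that no initially closed vertex has \emph{both} $\bZ$-neighbors open together with at least $\theta-2$ open $K$-neighbors (a.a.s.\ since a direct first-moment estimate gives $\probsub{p}{\cG_4^c}=\cO(mn^2p^2(np)^{\theta-2})=o(1)$, the extra factor $p$ from requiring both $\bZ$-neighbors open being essential, especially when $\ell=1$). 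Now on $\cG_2\cap\cG_3\cap\cG_4\cap\{N=0\}$ every plane is $(2\ell+1)$-II, and the dynamics cannot change the configuration: if $v$ were the first vertex to flip, at time $t\ge1$ it would have at least $\theta$ neighbors open in $\omega_{t-1}=\omega_0$; if $v$ has no open $\bZ$-neighbor then those are all $K$-neighbors, so $v$'s plane is not $(2\ell+1)$-II, contradicting $\cG_2\cap\{N=0\}$; if $v$ has exactly one open $\bZ$-neighbor it has at least $\theta-1$ open $K$-neighbors, contradicting $\cG_3$; if both $\bZ$-neighbors are open it has at least $\theta-2$ open $K$-neighbors and is closed, contradicting $\cG_4$. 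Hence on this event $\omega_\infty=\omega_0$, which is not all of $V$ a.a.s., so $\probsub{p}{\Span}\le\probsub{p}{N\ge1}+\probsub{p}{\cG_2^c}+\probsub{p}{\cG_3^c}+\probsub{p}{\cG_4^c}+o(1)\to1-e^{-\lambda}$. Combining the two bounds proves the lemma.

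The step I expect to require the most care is transferring the ``slightly supercritical'' single-plane estimates to $p$ of the form (\ref{form-p-grad}): verifying $\probsub{p}{\text{not $(\theta-1)$-IS}}=o(1/m)$ (via monotonicity from the $\ell\ge2$ even-threshold lemma and a bare-hands argument for $\ell=1$) and, via Lemma~\ref{single-plane-gradual}, that $\probsub{p}{\text{not $(2\ell+1)$-II}}$ and $\probsub{p}{\text{$(2\ell+1)$-IS}}$ are asymptotically equal so that their difference is $o(1/m)$. One must also be careful to split the interference event into the ``one open $\bZ$-neighbor'' and ``two open $\bZ$-neighbors'' cases (events $\cG_3$ and $\cG_4$), since merging them into ``an open $\bZ$-neighbor and $\theta-2$ open $K$-neighbors'' yields only an $\cO(1)$, not $o(1)$, bound when $\ell=1$; the Binomial-to-Poisson limit for $N$ and the remaining first-moment estimates are routine.
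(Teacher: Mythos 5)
Your proof is correct and follows essentially the same route as the paper's: the lower bound rests on monotonicity (scaling the constant in Lemma~\ref{single-plane-even-IS}/\ref{single-plane-2-IS}) plus Lemma~\ref{sufficient-condition} and the Poisson limit, and the upper bound rests on the observation that, modulo the negligible ``$\bZ$-assisted'' interference events (your $\cG_3,\cG_4$, the paper's two-case $\bZ$-assisted definition), nothing can open in the first step unless some plane fails to be $(2\ell+1)$-II. The only cosmetic deviation is that you route the upper bound through $\{N=0\}$ and the auxiliary event $\cG_2$ (every non-$(2\ell+1)$-II plane is $(2\ell+1)$-IS), whereas the paper bounds directly by $\probsub{p}{\bigcup_i\{\text{not $(2\ell+1)$-II}\}}$ and invokes Lemma~\ref{single-plane-gradual} for the limit; both give the same constant.
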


\begin{proof} Observe that
$$
\left(\bigcap_{i=0}^{m-1} \left\{ \text{$\{i\}\times K_n^2$ is $(2\ell)$-IS} \right\}\right)\mathbin{\scalebox{1.4}{\ensuremath{\cap}}}
\left(\bigcup_{i=0}^{m-1} \left\{ \text{$\{i\}\times K_n^2$ is $(2\ell+1)$-IS} \right\}\right) \subseteq \Span.
$$
As the size of $p$ given by \eqref{form-p-grad} is much larger than that given by 
\eqref{form-p} we can, by monotonicity in $p$, take the scaling constant $a$ in 
Lemma~\ref{single-plane-even-IS} to be arbitrarily large, 
which implies that 
$$
\probsub{p}{\bigcap_{i=0}^{m-1} \left\{ \text{$\{i\}\times K_n^2$ is $(2\ell)$-IS} \right\}}\to 1.
$$
So, by FKG inequality, 
\begin{equation}
\label{sg-eq1}
\begin{aligned}
\probsub{p}{\Span}
&\ge (1-o(1)) 
\probsub{p}{\bigcup_{i=0}^{m-1} \left\{ \text{$\{i\}\times K_n^2$ is $(2\ell+1)$-IS} \right\}}\\
&\to 1-\exp(-2a^{\ell+1}/(\ell+1)!),
\end{aligned}
\end{equation}
by Lemma~\ref{single-plane-gradual} and elementary Poisson convergence.

Call a site {\em $\bZ$-assisted} if one of the following holds: the site has an initially open $\bZ$-neighbor and at least $2\ell$ initially open $K$-neighbors, or the site has two initially open $\bZ$-neighbors  
and at least $2\ell-1$ initially open $K$-neighbors.  The probability that there exists a $\bZ$-assisted site  
 is bounded above by a constant times 
$$
mn^2\left[p(np)^{2\ell} + p^2 (np)^{2\ell-1}\right]=\cO(m^{-1}/(n^2p))=o(1).
$$
If an initially closed site becomes open in the first step of bootstrap percolation, then either that site is in a plane that is not $(2\ell+1)$-II, or that site is $\bZ$-assisted. 
Therefore, 
\begin{equation}
\label{sg-eq2}
\begin{aligned}
&\probsub{p}{\text{there exists a site that becomes open in the first step}}\\&\le  
\probsub{p}{\bigcup_{i=1}^m \left\{ \text{$\{i\}\times K_n^2$ is not $(2\ell+1)$-II} \right\}}+o(1)\\
&\to 1-\exp(-2a^{\ell+1}/(\ell+1)!),\\
\end{aligned}
\end{equation}
again by Lemma~\ref{single-plane-gradual}. The two asymptotic bounds (\ref{sg-eq1}) and 
(\ref{sg-eq2}) establish the desired convergence.
\end{proof}

\section{Exceptional Cases and Proofs of Main Theorems} \label{sec-exceptional}
 
\subsection{Threshold $2$} \label{subsec-theta-2}
  
We now show that when $\theta=2$, the critical $a$ is twice as large as obtained by 
taking $\ell=1$ in Lemmas~\ref{even-theta-supercr} and~\ref{even-theta-subcr}. 
Thus we will still assume (in this and in the next subsection) 
that $p$ has the scaling given by (\ref{form-p}), that is,
\begin{equation}\label{form-p-1}
p=a\frac{\log n}{n^2}.
\end{equation}

\begin{lemma} \label{single-plane-2-IS}
Assume that $p$ is of the form (\ref{form-p-1}). Then 
$$
\probsub{p}{\text{$K_n^2$ is not $2$-IS}}
\sim \probsub{p}{\text{$K_n^2$ is $2$-II}}
\sim \frac{a\log n}{n^a}.
$$
\end{lemma}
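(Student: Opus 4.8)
The plan is to reduce everything to a single combinatorial fact about $\theta=2$ dynamics on $K_n^2$, and then run routine first--moment estimates. The fact is: \emph{$K_n^2$ is not $2$-IS if and only if all initially open sites lie on one line} (a single row or a single column). The ``if'' direction is immediate, since a line with $k$ open sites fills at most that line and spreads no further (for $n\ge 2$ every site off the line sees at most one open neighbour). For the ``only if'' direction I would argue by a cascade: if two open sites $(i_1,j_1),(i_2,j_2)$ are in general position, i.e.\ $i_1\ne i_2$ and $j_1\ne j_2$, then $(i_1,j_2)$ and $(i_2,j_1)$ each have two open $K$-neighbours and hence become open; the resulting $2\times 2$ open block forces the entire rows $i_1$ and $i_2$ to fill (each remaining site in these rows sees two open sites in its own row), after which every column fills. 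Thus a non-collinear open set internally spans. Taking the contrapositive, and checking the (trivial) case $|\omega\cap K_n^2|\le 1$ separately, gives the characterization; the same cascade also shows non-collinear $\Rightarrow$ $2$-IS, and since a configuration that is simultaneously $2$-IS and $2$-II must be fully occupied, we get $\{K_n^2\text{ is }2\text{-II}\}\subseteq\{\text{all open sites on one line}\}\cup\{\omega\equiv 1\}$, while clearly $\{|\omega\cap K_n^2|\le 1\}\subseteq\{K_n^2\text{ is }2\text{-II}\}\cap\{K_n^2\text{ is not }2\text{-IS}\}$.

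With these inclusions in place, the rest is a sandwich. Write $N$ for the number of open sites in the copy of $K_n^2$. On the lower side, $\probsub{p}{N\le 1}=(1-p)^{n^2}+n^2p(1-p)^{n^2-1}$; since $n^2p=a\log n$ and $n^2p^2=\cO((\log n)^2/n^2)=o(1)$, a Taylor expansion yields $(1-p)^{n^2}\sim n^{-a}$, hence $\probsub{p}{N\le 1}\sim a\log n\cdot n^{-a}$ because $a\log n\to\infty$. On the upper side, by a union bound over the $2n$ lines together with the independence of the configuration inside and outside a fixed line, $\probsub{p}{\text{all open sites on one line}}\le \probsub{p}{N\le 1}+2n\binom{n}{2}p^2(1-p)^{n^2-n}$, and the last term is $\cO((\log n)^2/n)\cdot n^{-a}=o(a\log n\cdot n^{-a})$ since $(1-p)^{-n}=\exp(np+\cO(np^2))\to 1$. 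Also $p^{n^2}=o(n^{-a}\log n)$. Combining, all three quantities $\probsub{p}{N\le 1}$, $\probsub{p}{K_n^2\text{ is }2\text{-II}}$ and $\probsub{p}{K_n^2\text{ is not }2\text{-IS}}$ are asymptotic to $\frac{a\log n}{n^{a}}$, which is the claim.

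The only nontrivial step is the cascade characterization; the probabilistic part is entirely elementary. It is worth noting why the answer here differs from the $\ell=1$ specialization of Lemma~\ref{single-plane-even-IS} (which would predict $2n^{-a}$): when $\theta=2$ the notion of $\ell$-viability is vacuous (every nonempty configuration has a line with at least one open site), so the dominant obstruction to being $2$-IS is not the absence of a line with $\ell$ open sites but the collinearity of the (typically single) open site, which contributes the extra factor $\tfrac a2\log n$.
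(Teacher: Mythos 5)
Your proposal is correct and uses essentially the same approach as the paper: both hinge on the characterization that $K_n^2$ is not $2$-IS precisely when all initially open sites are collinear (the paper uses this implicitly in its conditional computation, while you spell out the cascade argument for it), and both isolate the dominant contribution from configurations with at most one open site, showing the $N\ge 2$ collinear case is lower order. Your sandwich via $\{N\le 1\}\subseteq\{2\text{-II}\}\cap\{\text{not 2-IS}\}\subseteq\{\text{all on one line}\}\cup\{\omega\equiv 1\}$ together with the union bound $2n\binom{n}{2}p^2(1-p)^{n^2-n}$ is a clean rendering of the same estimate the paper obtains by summing over $G_k$, $k\ge 2$.
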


\begin{proof} Let $G_k$ be the event that $K_n^2$ contains exactly 
$k$ initially occupied points. Then 
\begin{equation}\label{sp2-eq1}
\probsub{p}{\text{$K_n^2$ is $2$-II}}\ge \probsub{p}{G_1}\sim n^2pe^{-n^2p}=an^{-a}\log n.
\end{equation}
Moreover,
\begin{equation}\label{sp2-eq2}
\begin{aligned}
&\probsub{p}{\text{$K_n^2$ is not $2$-IS}}\\
&= \probsub{p}{G_0} +\probsub{p}{G_1}\\
&\qquad+\sum_{k\ge 2} \probsub{p}{G_k}\probsub{p}{\text{all initially occupied points lie on the same line}\,|\,G_k}\\
&\le (1+o(1)) an^{-a}\log n+ \sum_{k\ge 2} (n^2p)^k\exp{(-(n^2-k)p)}\cdot\frac{1}{2n-1}\cdot \frac 1{n^{k-2}}\\
&= (1+o(1)) an^{-a}\log n+\frac{e^{-n^2p}}{2n-1}(n^2pe^p)^2(1-npe^p)^{-1}\\
&= (1+o(1)) an^{-a}\log n+ \cO(n^{-a-1}(\log n)^2)
\end{aligned}
\end{equation}
Together, (\ref{sp2-eq1}) and  (\ref{sp2-eq2}) establish the desired asymptotics.
\end{proof}

\begin{lemma} \label{theta-2}
Suppose $\theta=2$, and $p$ has the form (\ref{form-p-1}). 
Then $\probsub{p}{\Span}$
converges to $0$ when $a<\gamma/2$ and to $1$ when $a>\gamma/2$.
\end{lemma}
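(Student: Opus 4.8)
\textbf{Proof plan for Lemma~\ref{theta-2}.}

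The strategy is to recycle the architecture of Lemmas~\ref{odd-theta-supercr} and~\ref{odd-theta-subcr}, but with the single-plane inputs supplied by Lemma~\ref{single-plane-2-IS} in place of the odd-threshold estimates, and to keep careful track of the fact that for $\theta = 2$ the relevant single-plane event (``$K_n^2$ is not $2$-IS'' $\approx$ ``$K_n^2$ is $2$-II'') has probability $\sim a n^{-a}\log n$, so the critical comparison becomes $m \cdot a n^{-a}\log n$ versus $1$, i.e.\ $a$ versus $\gamma$. Note that when $\theta = 2$ the notions $(\theta-2)$-IS and $(\theta-1)$-IS degenerate: every plane is trivially $0$-IS, and a plane fails to be $1$-IS only if it is completely empty, which has probability $(1-p)^{n^2} = n^{-a+o(1)}$. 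This means conditions (1) and (2) of Lemma~\ref{sufficient-condition} are essentially free, and spanning is governed purely by condition (3): the existence of a $2$-IS plane. So the lemma really reduces to a first-moment/second-moment (or Poisson) computation for the number of $2$-IS planes among the $m$ copies of $K_n^2$.

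For the supercritical direction ($a > \gamma/2$ --- wait, the threshold in the statement is $a > \gamma/2$, consistent with $\ell = 1$ in Lemma~\ref{even-theta-supercr} giving $a^1/1! > \gamma/4$, doubled to $a > \gamma/2$; the factor-of-two improvement coming precisely because of the ``$2$'' in the exponent of $n^{-a}$ versus $n^{-a/2}$-type bounds in the $\ell$-viable estimates): here I would first invoke Lemma~\ref{single-plane-2-IS} to get that the probability a fixed plane is \emph{not} $2$-IS is $\sim a n^{-a}\log n$, so the expected number of non-$2$-IS planes is $\sim a m n^{-a}\log n = a n^{\gamma - a + o(1)}\log n \to 0$ when $a > \gamma$. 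Hmm — but the claimed threshold is $\gamma/2$, not $\gamma$. So the relevant comparison must instead involve the probability that a plane is $2$-II (equivalently not $2$-IS) being matched against blocking-\emph{interval} events, exactly as in the necessary-condition lemma: a single isolated $2$-II plane does not block, but we need to rule out a blocking interval, and the density of $2$-II planes interacts with the interference probabilities. Let me restate: following Lemma~\ref{odd-theta-supercr}, set $\pgood = \probsub{p}{K_n^2 \text{ is } 2\text{-IS}} \to 1$ and $\pbad = \probsub{p}{K_n^2 \text{ is not } 1\text{-IS}} = \probsub{p}{K_n^2 \text{ empty}} \sim n^{-a}$ (much smaller). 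Then condition (3) holds a.a.s.\ as soon as $m\to\infty$, condition (1) is automatic, and condition (2) holds a.a.s.\ provided $m\pbad^2/\pgood \to 0$, i.e.\ $m n^{-2a} \to 0$, i.e.\ $a > \gamma/2$. This is exactly the factor-of-two phenomenon advertised, and the proof of Lemma~\ref{odd-theta-supercr} transfers verbatim (using Lemmas~\ref{lemma-doubleone} and~\ref{binomial-ld}).

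For the subcritical direction ($a < \gamma/2$), I would mirror Lemma~\ref{odd-theta-subcr}. Call a plane \emph{exceptional} if it is not $2$-II or if it is $1$-II (i.e., empty); $\pgood = \probsub{p}{K_n^2 \text{ not } 2\text{-II}} \sim a n^{-a}\log n$ and $\pbad = \probsub{p}{K_n^2 \text{ empty}} \sim n^{-a}$. Since $a < \gamma/2 < \gamma$, we have $m\pgood = a n^{\gamma-a+o(1)}\log n \to \infty$, so by Lemma~\ref{binomial-ld} there are $\Theta(m\pgood)$ exceptional planes in $[0,m/2]$ a.a.s., and (as $\pbad \asymp \pgood / \log n$ with $m\pbad = n^{\gamma - a} \to \infty$) a $1$-II (empty) exceptional plane $I_1$ occurs in $[0,m/2]$ a.a.s.; the next empty exceptional plane $I_2$ is within $k = \log n/(\pgood+\pbad) = O(n^a) = O(n^{a})$ steps with probability $1 - n^{-1}$; and the interference-avoidance event $G_{i_1,i_1+k}$ holds with probability $1 - O(k n^{-1}(\log n)^2) \cdot (\text{power}) = 1-o(1)$, using Lemma~\ref{single-plane-interference-below} with $\ell = 1$ (the relevant bound there is $\cO(n^{-1+1/\ell}(\log n)^{1-1/\ell}) = \cO((\log n)^0)$ per plane for $\theta = 2\ell = 2$ — I need $k$ times this to be $o(1)$, which requires a little care since $1/\ell = 1$ makes that per-plane bound $O(1)$; so for $\theta = 2$ I should instead directly estimate the probability that some vertex in a block of $k$ planes has an open $\bZ$-neighbor and $\ge \theta - 2 = 0$ open $K$-neighbors, which is the probability that \emph{any} vertex in the block has an open $\bZ$-neighbor, namely $O(k n^2 p^2) = O(k n^{-2}(\log n)^2) = O(n^{a - 2}(\log n)^3) = o(1)$ — this is the point where $\theta = 2$ genuinely differs and is the main obstacle to a mechanical transfer). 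Then the FKG telescoping of Lemma~\ref{odd-theta-subcr}, equations~\eqref{ots-eq4}, yields $\probsub{p}{\text{not spanned}} \ge (1-o(1))\probsub{p}{I_1 \le m/2} = 1 - o(1)$. The main obstacle, as flagged, is handling the $\ell = 1$ degeneracy in the interference lemmas — for $\theta = 2$ one should think of ``$\bZ$-assisted'' as simply ``has an open $\bZ$-neighbor,'' re-derive the needed $o(1)$ bound by a direct union bound over the $O(k n^2)$ vertices in a block, and check that the resulting power $n^{a-2}$ (times logs) is $o(1)$, which it is since $a < \gamma/2$ and $\gamma$ is fixed. Everything else is a faithful restatement of the odd-$\theta$ arguments with the single-plane inputs replaced by Lemma~\ref{single-plane-2-IS}.
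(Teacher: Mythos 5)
Your supercritical direction is sound and uses essentially the same mechanism as the paper, though you route it through the general machinery of Lemmas~\ref{sufficient-condition} and~\ref{odd-theta-supercr} where the paper simply observes that ``no two adjacent empty planes'' plus ``some plane is $2$-IS'' is sufficient and applies FKG directly. That part buys nothing new, but it is correct.

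Your subcritical direction contains a genuine gap. For $\theta=2$ the event $G_{i_1,i_1+k}$ degenerates: since $\theta-2=0$, the clause ``at least $\theta-2$ open $K$-neighbors'' is vacuous, so $G_{i_1,i_1+k}$ is literally the event that the planes $\{i_1\},\ldots,\{i_1+k-1\}$ are all empty. That probability is $(1-p)^{kn^2}=\exp(-kn^2p(1+o(1)))$, and with $k=\log n/(\pgood+\pbad)\asymp n^a$ and $n^2p=a\log n$ this tends to $0$, not $1$. Your proposed salvage is to replace Lemma~\ref{single-plane-interference-below} by a direct union bound and you report the per-block cost as $\cO(kn^2p^2)$; but the event in question is that some vertex in the block has an open $\bZ$-neighbor (the $K$-neighbor clause being empty), and there are $\cO(kn^2)$ vertices each succeeding with probability $\asymp p$, so the correct bound is $\cO(kn^2p)\asymp n^a\log n\to\infty$. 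So even with the arithmetic fixed the approach fails. More structurally, in Lemma~\ref{odd-theta-subcr} the two $(\theta-1)$-II planes $I_1,I_2$ are typically far apart and the interference event is what certifies the long stretch between them as a blocking interval; for $\theta=2$ essentially the only realizable blocking interval is a pair of \emph{adjacent} empty planes, and consecutive-in-the-exceptional-sequence is not the same as adjacent. The paper sidesteps all of this: two adjacent empty planes block spanning outright, the indicators of ``plane $i$ is empty'' are i.i.d.\ Bernoulli with parameter $(1-p)^{n^2}=n^{-a+o(1)}$, and Lemma~\ref{lemma-doubleone} with $k=m\asymp n^\gamma$ and $r\asymp n^{-a}$ gives $\Prob{\text{no two adjacent empties}}=\exp(-n^{\gamma-2a+o(1)})\to 0$ when $a<\gamma/2$. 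You should replace your subcritical argument with this direct application of Lemma~\ref{lemma-doubleone} rather than adapting the blocking-interval machinery.
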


\begin{proof}
If two neighboring copies of $K_n^2$ are initially empty then spanning cannot occur. By Lemma~\ref{lemma-doubleone}, the 
probability of this is close to $0$ (resp., $1$) if 
$m(1-p)^{2n^2}
$ goes to $\infty$ (resp., to $0$), which happens when $a>\gamma/2$ (resp., when $a<\gamma/2$). 
Thus, when  $a<\gamma/2$, $\probsub{p}{\Span}\to 0$. On the other 
hand, a sufficient condition for spanning is that there are no initially empty neighboring 
copies of $K_n^2$ {\it and\/} there is at least one $2$-IS copy of $K_n^2$. Therefore, Lemma~\ref{single-plane-2-IS} and the FKG inequality imply that, if $a>\gamma/2$,   
$\probsub{p}{\Span}\to 1$.
\end{proof}

\subsection{Threshold $3$} \label{subsec-theta-3}
  
We now handle the case $\theta=3$, 
beginning with the restatement of Lemma~\ref{single-plane-above-IS}
for this case.

\begin{lemma} \label{single-plane-3-IS}
Assume $p$ has the form (\ref{form-p-1}). Then 
$$
\probsub{p}{\text{$K_n^2$ is $3$-IS}}\sim 
\probsub{p}{\text{$K_n^2$ is not $3$-II}}\sim 
a^2\cdot \frac{(\log n)^{2}}{n}.
$$
\end{lemma}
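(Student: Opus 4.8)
The plan is to rerun the proof of Lemma~\ref{single-plane-above-IS} in the borderline case $\ell=1$ — note that (\ref{form-p-1}) is exactly (\ref{form-p}) with $\ell=1$, and that Lemma~\ref{propriety}, the BK/FKG inputs, and the adaptation of Lemma 3.3 of \cite{GHPS} are all valid for $\ell=1$ — and to check that nothing degenerates. Let $H$ (resp.\ $V$) be the event that some horizontal (resp.\ vertical) line of $K_n^2$ carries at least two open sites. The $n$ horizontal lines are independent and a fixed line has $\ge 2$ open sites with probability $\tfrac12(np)^2(1+o(1))$, so $\probsub{p}{H}\sim \tfrac12 n(np)^2=\tfrac{a^2}{2}\cdot\tfrac{(\log n)^2}{n}$ (the union over the $n$ lines has the same asymptotics since $n(np)^2\to 0$), and the same holds for $V$ by symmetry.

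For the upper bound, note that if $K_n^2$ is not $3$-II then some initially closed vertex has at least three open neighbours among its row and column, which forces a row or a column with at least two open sites; hence $\{\text{not $3$-II}\}\subseteq H\cup V$, and $\probsub{p}{\text{not $3$-II}}\le\probsub{p}{H\cup V}\le 2\probsub{p}{H}$. This is asymptotically sharp: by BK, $\probsub{p}{H\circ V}\le\probsub{p}{H}^2$, while $(H\cap V)\setminus(H\circ V)$ forces an open site with two further open sites in its Hamming neighbourhood, so $\probsub{p}{(H\cap V)\setminus(H\circ V)}=\cO(n^2p\cdot n^2p^2)=\cO((\log n)^3/n^2)=o(\probsub{p}{H}^2)$; thus $\probsub{p}{H\cap V}=o(\probsub{p}{H})$ and $\probsub{p}{H\cup V}\sim 2\probsub{p}{H}=a^2(\log n)^2/n$. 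Since $\probsub{p}{\text{$3$-IS}}\le\probsub{p}{\text{not $3$-II}}+p^{n^2}$ (the only overlap of $\{\text{$3$-IS}\}$ with $\{\text{$3$-II}\}$ is the negligible event that $K_n^2$ starts fully occupied), we get $\probsub{p}{\text{$3$-IS}}\le(1+o(1))\,2\probsub{p}{H}$.

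For the matching lower bound, let $G$ be the event that $K_n^2$ is $1$-proper, so $\probsub{p}{G}\to 1$ by (\ref{propriety-eq2}). The deterministic core is the inclusion $(H\cup V)\cap G\subseteq\{\text{$K_n^2$ is $3$-IS}\}$, which for $\ell=1$ I would verify by the following cascade (assuming $H$): a row $r$ with $\ge 2$ open sites, together with any of the $\ge 3$ columns carrying an open site inside one of the (two) subsquares whose rows avoid $r$, gives the corresponding intersection vertex three open neighbours, so row $r$ grows to $\ge 3$ open sites; then every vertex of row $r$ has $\ge 3$ open neighbours in its row, so row $r$ fills completely; by $1$-propriety again, each subsquare contains $\ge 2$ rows distinct from $r$ for which, once row $r$ is full, every vertex at a good column of that subsquare has $\ge 3$ occupied neighbours (one from the full row $r$, two from the good-row and good-column open sites), so those rows reach $\ge 3$ open sites and fill; once $\ge 3$ rows are full, every remaining vertex has $\ge 3$ occupied neighbours in its column and the whole plane fills. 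Granting this, FKG gives $\probsub{p}{\text{$3$-IS}}\ge\probsub{p}{(H\cup V)\cap G}\ge\probsub{p}{H\cup V}\probsub{p}{G}\sim 2\probsub{p}{H}$, and the three quantities in the statement are all trapped between $(1-o(1))2\probsub{p}{H}$ and $(1+o(1))2\probsub{p}{H}$, which equals the asserted $a^2(\log n)^2/n$ up to $1+o(1)$.

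The main obstacle is exactly this combinatorial inclusion $(H\cup V)\cap G\subseteq\{\text{$3$-IS}\}$: one has to keep track that the open points supplied by $1$-propriety lie in subsquares disjoint from the line being grown, so that they genuinely contribute new occupied neighbours rather than sites already on that line; the probabilistic estimates are routine and are the $\ell=1$ instances of those used in Lemma~\ref{single-plane-above-IS}.
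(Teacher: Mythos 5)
Your proof is correct and matches the paper's own approach: the paper gives no separate argument, since Lemma~\ref{single-plane-3-IS} is explicitly presented as a restatement of Lemma~\ref{single-plane-above-IS} (whose hypothesis already allows $\ell\ge 1$), and your argument is precisely the $\ell=1$ instance of that proof. The only thing you add beyond the paper is a correct, if informal, verification of the deterministic inclusion $(H\cup V)\cap G\subseteq\{\text{$K_n^2$ is $3$-IS}\}$, which the paper takes as evident.
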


\begin{lemma} \label{theta-3}
Assume $p$ has the form (\ref{form-p-1}). Then 
the conclusions of Lemmas~\ref{odd-theta-supercr}, \ref{odd-theta-subcr}, and~\ref{span-grad} hold, with $\ell=1$.
\end{lemma}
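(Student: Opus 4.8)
The plan is to re-run the proofs of Lemmas~\ref{odd-theta-supercr}, \ref{odd-theta-subcr}, and~\ref{span-grad} with $\ell=1$, substituting for each single-plane input that was proved only for $\ell\ge 2$ its $\theta=3$ counterpart. The dependence on $\ell\ge 2$ enters in three places: Lemma~\ref{single-plane-odd-IS}, supplying the probabilities of being $(\theta-1)$-IS and $(\theta-2)$-IS; Lemma~\ref{single-plane-above-IS}, now replaced by Lemma~\ref{single-plane-3-IS}; and the $\bZ$-interference estimate making~(\ref{ots-eq3}) $o(1)$, which the proof of Lemma~\ref{odd-theta-subcr} itself used only ``provided $\ell>1$.'' For $\theta=3$ one has $\theta-1=2$ and $\theta-2=1$, so Lemma~\ref{single-plane-odd-IS} is replaced by Lemma~\ref{single-plane-2-IS} for the $(\theta-1)$-IS event and, for the $(\theta-2)$-IS event, by the elementary fact that ``$1$-IS'' means ``nonempty,'' with $\probsub{p}{\text{$K_n^2$ is empty}}=(1-p)^{n^2}\sim n^{-a}$; these two probabilities differ by a factor $\Theta(\log n)$, which is exactly what makes $\ell=1$ exceptional. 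The gradual case (Lemma~\ref{span-grad} with $\ell=1$, $p$ of the form~(\ref{form-p-grad})) then needs no new idea: Lemma~\ref{single-plane-gradual} already holds for $\ell=1$, giving $\probsub{p}{\text{$K_n^2$ is $3$-IS}}\sim a^2/m$ and hence the limit $1-\exp(-a^2)=1-\exp(-2a^{\ell+1}/(\ell+1)!)$ by Poisson convergence, with Lemma~\ref{single-plane-even-IS} replaced by Lemma~\ref{single-plane-2-IS} at a large auxiliary constant (legitimate since $p$ of the form~(\ref{form-p-grad}) dominates that of~(\ref{form-p-1}) at any fixed constant) and the $\bZ$-assisted-site bound $\cO(m^{-1}/(n^2p))=o(1)$ unaffected.

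For the subcritical statement (Lemma~\ref{odd-theta-subcr} with $\ell=1$), the delicate slab estimate that required $\ell>1$ is avoided entirely. If $\gamma<1$, then by Lemma~\ref{single-plane-3-IS} the expected number of copies of $K_n^2$ that are not $3$-II, and by Lemma~\ref{single-plane-interference-below} the expected number containing a site with an open $\bZ$-neighbour and at least $\theta-2$ open $K$-neighbours, are both $\cO(n^{\gamma-1}(\log n)^2)=o(1)$; since any closed site that becomes open at step~$1$ forces one of these two events, a.a.s.\ $\omega_1=\omega_0$, hence $\omega_\infty=\omega_0\not\equiv 1$. If instead $\gamma>1$ and $a<(\gamma+1)/2$, then $(\gamma+1)/2<\gamma$ forces $a<\gamma$, so $m\,\probsub{p}{\text{$K_n^2$ is empty}}\sim n^{\gamma-a}\to\infty$; the copies being independent, a.a.s.\ some copy is initially empty, and with $\theta=3$ every vertex of an empty copy has at most $2<\theta$ neighbours outside it, so that copy stays empty forever and $\probsub{p}{\Span}\to 0$.

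For the supercritical statement (Lemma~\ref{odd-theta-supercr} with $\ell=1$) one applies Lemma~\ref{sufficient-condition}. Condition~(3) holds a.a.s.\ since $m\,\probsub{p}{\text{$K_n^2$ is $3$-IS}}\sim a^2(\log n)^2 n^{\gamma-1}\to\infty$; condition~(2) is verified exactly as before with $\pgood=\probsub{p}{\text{$3$-IS}}$ and $\pbad=\probsub{p}{\text{not $2$-IS}}\sim an^{-a}\log n$ (Lemma~\ref{single-plane-2-IS}), for which $a>(\gamma+1)/2>1$ yields $\pbad\ll\pgood$ and $m\pbad^2/\pgood\sim n^{\gamma+1-2a}\to 0$, so the exceptional-plane count via Lemma~\ref{binomial-ld} and then Lemma~\ref{lemma-doubleone} goes through verbatim; and condition~(1), which for $\theta=3$ just says no copy of $K_n^2$ is initially empty, holds a.a.s.\ precisely when $m(1-p)^{n^2}\sim n^{\gamma-a}\to 0$, i.e.\ $a>\gamma$. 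I expect this last point to be the main subtlety: unlike for $\ell\ge 2$, condition~(1) is not a free consequence of the condition-(2) hypothesis, so the supercritical threshold for $\theta=3$ with $\gamma>1$ is $\max\{\gamma,(\gamma+1)/2\}=\gamma$, and the window $(\gamma+1)/2<a<\gamma$ falls on the subcritical side via the empty-copy argument above. Modulo this, everything reduces to bookkeeping the $\ell=1$ single-plane estimates in the arguments of Sections~\ref{sec-sufficient}--\ref{sec-gradual}.
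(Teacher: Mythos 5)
Your handling of Lemma~\ref{span-grad} and of the $\gamma<1$ case of Lemma~\ref{odd-theta-subcr} agrees with the paper. For the $\gamma>1$ subcritical case your route is genuinely different and simpler: the paper keeps the blocking-interval argument and, because the interference bound of Lemma~\ref{single-plane-interference-below} at $\ell=1$ only gives $\probsub{p}{G_{i_1,i_1+k}}\ge\alpha>0$ rather than $\to 1$, it subdivides $\bZ_m$ into $m/m'$ independent slabs with $m'=n^{\gamma'}$, $\gamma'<\gamma$ and $a<(\gamma'+1)/2$, obtaining $m/m'\gg 1$ independent chances for a blocking interval. Your observation that, for $\theta=3$, an initially empty copy of $K_n^2$ is permanently closed -- each of its vertices has only two $\bZ$-neighbors outside the plane, fewer than $\theta$ -- bypasses that machinery entirely.

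The point you flag as the ``main subtlety'' is not bookkeeping to be set aside; it is a genuine clash with the statement you are asked to prove, and you are right to be worried. Your empty-plane argument gives $\probsub{p}{\Span}\to 0$ for \emph{all} $a<\gamma$, and when $\gamma>1$ this covers the whole window $(\gamma+1)/2<a<\gamma$, in which Lemma~\ref{odd-theta-supercr} with $\ell=1$ (i.e.\ what Lemma~\ref{theta-3} asserts) would give $\probsub{p}{\Span}\to 1$. Both cannot hold. The source of the anomaly is exactly what you compute: for $\ell\ge 2$, $\probsub{p}{\text{$K_n^2$ is not $(\theta-2)$-IS}}\sim n^{-2a^\ell/\ell!}$ (Lemma~\ref{single-plane-odd-IS}), the factor $2$ in the exponent coming from the two orthogonal viability directions, which makes condition~(1) of Lemma~\ref{sufficient-condition} hold under $a^\ell/\ell!>(\gamma+1/\ell)/2$; for $\ell=1$ the horizontal and vertical viability events collapse into a single event (nonemptiness), so $\probsub{p}{\text{$K_n^2$ is not $1$-IS}}=(1-p)^{n^2}\sim n^{-a}$, the exponent halves to $a$, and condition~(1) then requires $a>\gamma$. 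The paper's proof of Lemma~\ref{theta-3} only says to substitute Lemma~\ref{single-plane-2-IS} for Lemma~\ref{single-plane-even-IS}, and never addresses what replaces the Lemma~\ref{single-plane-odd-IS} input used for condition~(1), so it does not close this. Your reasoning indicates that for $\theta=3$, $\gamma>1$ the critical constant is $a_c=\gamma$ rather than $(\gamma+1)/2$, so the $\ell=1$ case of Lemma~\ref{odd-theta-supercr} -- and with it Lemma~\ref{theta-3} and the $\ell=1$ instance of \eqref{K2-theorem-eq1} -- cannot hold as written. This appears to be a gap in the paper rather than in your argument; but a proposal that proves something contradicting its target lemma is not a proof of that lemma, and you should state the contradiction head-on rather than tucking it under ``modulo this.''
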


\begin{proof}
Lemmas~\ref{odd-theta-supercr} and~\ref{span-grad} 
are proved in the same fashion, substituting 
Lemma~\ref{single-plane-2-IS} for Lemma~\ref{single-plane-even-IS}. 
Clearly, Lemma~\ref{odd-theta-subcr} holds when $\gamma<1/\ell$. 

The rest of the proof of Lemma~\ref{odd-theta-subcr} needs to be slightly adapted, as 
now the probability in Lemma~\ref{single-plane-interference-below}
is only $\cO(\probsub{p}{\text{$K_n^2$ is $3$-IS}})$, by Lemma~\ref{single-plane-3-IS}. 
However, this means that 
$\probsub{p}{G_{i_1,i_1+k}}\ge \alpha>0$ for some fixed number $\alpha>0$
(instead of converging to $1$), and as a result the probability of occurrence of 
a blocking interval is at least $(1-o(1))\alpha$. This is still true if we replace 
$m$ by $m'=n^{\gamma'}$, where $\gamma'<\gamma$ and 
$a^\ell/\ell!<(\gamma'+1/\ell)/2$. But this means that we have 
have $m/m'\gg 1$ independent possibilities for a blocking 
interval to occur, which is sufficient.
\end{proof}

\subsection{A Boundary Case} \label{subsec-boundary1}

Here, we provide an example with $\theta=2\ell+1$ whereby the transition is neither sharp nor gradual. 
This occurs at a boundary case  $\gamma=1/\ell$; more precisely, we 
assume that 
\begin{equation}
\label{boundary-m}
m=\frac{n^{1/\ell}}{(\log n)^{1+1/\ell}},
\end{equation}
and that $p$ is given by either (\ref{form-p}) or (\ref{form-p-grad}), which now match. 

\begin{lemma}\label{span-boundary}
Assume that $\theta=2\ell+1 \geq 3$, $m$ is given by (\ref{boundary-m}), and $p$ is given by (\ref{form-p}). 
Then (\ref{mixed-thm-eq}) holds.  
\end{lemma}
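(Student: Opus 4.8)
The plan is to combine the single-plane estimates from Section~\ref{sec-single} with the boundary scaling (\ref{boundary-m}), which is exactly tuned so that $\gamma=1/\ell$ and the probability that a single copy of $K_n^2$ is $(2\ell+1)$-IS, from Lemma~\ref{single-plane-above-IS}, becomes $\frac{2a^{\ell+1}}{(\ell+1)!}\cdot\frac{(\log n)^{1+1/\ell}}{n^{1/\ell}}\sim\frac{2a^{\ell+1}}{(\ell+1)!}\cdot\frac1m$; that is, the expected number of $(2\ell+1)$-IS planes converges to the constant $2a^{\ell+1}/(\ell+1)!$. This is the same Poisson-limit phenomenon that drives Lemma~\ref{span-grad}, and indeed (\ref{boundary-m}) is the scaling at which (\ref{form-p}) and (\ref{form-p-grad}) coincide. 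So the boundary case is a hybrid: the sufficiency/necessity machinery for the $\gamma>1/\ell$ regime (Lemmas~\ref{odd-theta-supercr}, \ref{odd-theta-subcr}) governs whether the bulk of the planes become occupied once a single $\theta$-IS plane appears, while the number of such $\theta$-IS ``nucleation'' planes is asymptotically Poisson, as in the gradual regime.

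First I would treat the subcritical side $a^\ell/\ell!<1/\ell$. Here I claim $\probsub{p}{\Span}\to 0$ because, with probability bounded away from zero, there is a blocking interval, and in fact with probability tending to $1$ there is no $\theta$-IS plane \emph{and} no $\bZ$-assisted site at all — wait, that is too strong since the expected number of $\theta$-IS planes is a positive constant. Instead I would argue as in the proof of Lemma~\ref{odd-theta-subcr}: the condition $a^\ell/\ell!<1/\ell$ forces, through Lemmas~\ref{single-plane-even-IS} and~\ref{single-plane-above-IS}, that $\pbad:=\probsub{p}{K_n^2\text{ is }(\theta-1)\text{-II}}$ is of order $n^{-a^\ell/\ell!}$ with $a^\ell/\ell!<1/\ell$, hence $m\pbad\to\infty$. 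Thus among the first $m/2$ planes there are (with probability tending to $1$) many $(\theta-1)$-II planes, and between the first two consecutive ones the gap $I_2-I_1$ is stochastically dominated by a geometric variable with the combined exceptional-plane probability, which is $\ll$ the reciprocal of $n^{1/\ell}/\log n\gg$ the relevant scale, so $I_2-I_1$ is $O(\log n/(\pgood+\pbad))=O(n^{1/\ell})$; combined with the no-interference event $G_{i_1,i_1+k}$ (which holds a.a.s.\ by Lemma~\ref{single-plane-interference-below} since $\ell\ge 2$ — the $\ell=1$ case being handled separately in Section~\ref{subsec-theta-3}), one gets a blocking interval a.a.s., hence $\probsub{p}{\Span}\to 0$. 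This reproduces the first line of (\ref{mixed-thm-eq}).

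Next I would treat the supercritical side $a^\ell/\ell!>1/\ell$. The key observation, mirroring Lemma~\ref{span-grad}, is
$$
\left(\bigcap_{i\in\bZ_m}\{\{i\}\times K_n^2\text{ is }(\theta-1)\text{-IS}\}\right)\cap\left(\bigcup_{i\in\bZ_m}\{\{i\}\times K_n^2\text{ is }\theta\text{-IS}\}\right)\subseteq\Span,
$$
since if every plane is $(\theta-1)$-IS then, in any contiguous not-fully-occupied interval in $\omega_\infty$, every plane is in particular $(\theta-1)$-IS, so no such nontrivial interval can be adjacent to a fully occupied plane, forcing all planes to be occupied once a single $\theta$-IS plane spans. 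The condition $a^\ell/\ell!>1/\ell$ ensures, via Lemma~\ref{single-plane-odd-IS}, that $\probsub{p}{K_n^2\text{ is }(2\ell-1)\text{-IS}}=1-n^{-2a^\ell/\ell!}$ with $2a^\ell/\ell!>2/\ell>1/\ell=\gamma$ (here I use $\ell\ge 2$, so $2/\ell>1/\ell$, and more importantly $2a^\ell/\ell!>\gamma$), hence $m\cdot n^{-2a^\ell/\ell!}\to 0$ and $\probsub{p}{\bigcap_i\{\{i\}\times K_n^2\text{ is }(\theta-1)\text{-IS}\}}\to 1$. Then by FKG and Poisson convergence (the number of $\theta$-IS planes among the $m$ of them converges in distribution to Poisson with mean $2a^{\ell+1}/(\ell+1)!$ — the events are asymptotically independent because disjoint-occurrence corrections are lower order, exactly as in Lemma~\ref{single-plane-above-IS}), $\probsub{p}{\Span}\ge(1-o(1))\,\probsub{p}{\bigcup_i\{\{i\}\times K_n^2\text{ is }\theta\text{-IS}\}}\to 1-\exp(-2a^{\ell+1}/(\ell+1)!)$. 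For the matching upper bound I would show that if no plane is $\theta$-IS and no site is $\bZ$-assisted (a term of order $mn^2[p(np)^{2\ell}+p^2(np)^{2\ell-1}]=o(1)$ by the same computation as in Lemma~\ref{span-grad}), then nothing ever changes; hence $\probsub{p}{\Span}\le\probsub{p}{\text{some plane is not }(2\ell+1)\text{-II}}+o(1)\to 1-\exp(-2a^{\ell+1}/(\ell+1)!)$, using Lemma~\ref{single-plane-above-IS}. This gives the second line of (\ref{mixed-thm-eq}).

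The main obstacle is making the Poisson convergence precise at this boundary scale: the expected number of $\theta$-IS planes is a genuine constant, so one cannot simply say ``a.a.s.\ there is one'' or ``a.a.s.\ there is none,'' and one must verify that the $m=n^{1/\ell}/(\log n)^{1+1/\ell}$ plane-events are asymptotically independent with the right mean. This follows from Lemma~\ref{single-plane-above-IS} together with the observation that the internal spanning events on distinct copies of $K_n^2$ are genuinely independent (they depend on disjoint sets of sites) once one conditions away the negligible $\bZ$-assisted contribution, so the count is \emph{exactly} a sum of $m$ i.i.d.\ indicators with success probability $\sim\frac{2a^{\ell+1}}{(\ell+1)!}\cdot\frac1m$, and elementary Poisson convergence applies. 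The secondary subtlety is that the subcritical argument and the supercritical ``bulk fills in'' argument both need $\ell\ge 2$ for Lemma~\ref{single-plane-interference-below} to give a genuinely $o(1)$ interference probability; but the hypothesis $\theta=2\ell+1\ge 3$ only rules out $\ell=0$, so I should note that $\ell=1$ (i.e.\ $\theta=3$) is covered by the adaptation already carried out in Lemma~\ref{theta-3}, where the interference probability is merely $O(\probsub{p}{K_n^2\text{ is }3\text{-IS}})$ and one passes to sub-blocks $R_i$ of length $n^{\gamma'}$ to amplify the constant probability of a blocking interval.
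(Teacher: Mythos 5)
Your overall strategy is sound and your supercritical argument mirrors the paper's quite closely (both hinge on the inclusion $\bigcap_i\{(2\ell)\text{-IS}\}\cap\bigcup_i\{\theta\text{-IS}\}\subseteq\Span$ and the Poisson count of $\theta$-IS planes, followed by the $\bZ$-assisted site bound for the upper bound). On the subcritical side you take a genuinely different route: you re-run the blocking-interval machinery from Lemma~\ref{odd-theta-subcr} at the boundary scale, verifying directly that $m\pbad\to\infty$ and that the interference bound (\ref{ots-eq3}) still gives $o(1)$ because $k\sim\log n/\pbad\sim n^{a^\ell/\ell!}\log n=o(n)$. The paper instead simply invokes Lemma~\ref{almost-inert}, which is stated under the hypothesis $\gamma\ge1/\ell$ precisely so as to cover this boundary case, and which already gives the stronger scarcity conclusion $|\omega_\infty|/mn^2\to0$. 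Your argument is self-contained and correct, but it reproves something that was already packaged.

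There is one concrete error you should fix on the supercritical side. To conclude that a.a.s.\ every plane is $(\theta-1)$-IS, i.e.\ $(2\ell)$-IS, the relevant estimate is Lemma~\ref{single-plane-even-IS}, which gives $\probsub{p}{K_n^2\text{ is not }(2\ell)\text{-IS}}\sim 2n^{-a^\ell/\ell!}$; you instead cite Lemma~\ref{single-plane-odd-IS} and use the exponent $2a^\ell/\ell!$, which governs the $(2\ell-1)$-IS property — a strictly weaker one, since $(2\ell)$-IS implies $(2\ell-1)$-IS but not conversely. The inequality $m\cdot n^{-2a^\ell/\ell!}\to0$ that you verify is therefore about the wrong event; what you actually need is $m\cdot n^{-a^\ell/\ell!}\to0$, which holds exactly when $a^\ell/\ell!>1/\ell$ given (\ref{boundary-m}) (and this is the precise reason the paper singles out Lemma~\ref{single-plane-even-IS}, or Lemma~\ref{single-plane-2-IS} when $\ell=1$, rather than appealing to monotonicity in $a$ as in Lemma~\ref{span-grad}, since at the boundary scaling the two forms of $p$ coincide and one cannot push $a$ arbitrarily high). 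With the corrected citation and exponent your argument goes through as intended.
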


\begin{proof}
If $a^\ell/\ell!>1/\ell$, then by Lemma~\ref{single-plane-even-IS} (when $\ell>1$) 
or Lemma~\ref{single-plane-2-IS} (when $\ell=1$), 
a.~a.~s.~every copy of $K_n^2$ is $(2\ell)$-IS, and computations (\ref{sg-eq1}) and (\ref{sg-eq2})
apply. On the other hand, if $a^\ell/\ell!<1/\ell$, then
we apply Lemma~\ref{almost-inert}; this lemma also holds for $\ell=1$ because we can apply Lemma~\ref{single-plane-2-IS} in place of Lemma~\ref{single-plane-even-IS} in its proof.
\end{proof}

\subsection{Proofs of Main Theorems} \label{subsec-proofs}

\begin{proof}[Proof of Theorem~\ref{K2-theorem}]
Lemmas~\ref{odd-theta-supercr},~\ref{odd-theta-subcr}, and~\ref{theta-3}  prove (\ref{K2-theorem-eq1}). 
Lemmas~\ref{span-grad} and~\ref{theta-3} prove (\ref{K2-theorem-eq2}). Finally, 
Lemmas~\ref{even-theta-supercr},~\ref{even-theta-subcr}, and~\ref{theta-2}  
prove (\ref{K2-theorem-eq3}).
\end{proof}

\begin{proof}[Proof of Theorem~\ref{almost-span-thm}]
By monotonicity, we may assume (\ref{form-p}) instead of (\ref{p-intro}).
For $\theta>3$, the theorem is then 
clearly a consequence of Lemmas~\ref{almost-all-spanned}
and~\ref{almost-inert}. As the second lemma holds for $\theta=3$, 
we only need to observe that the proof of Lemma~\ref{almost-all-spanned} 
holds when $\theta=3$ by Lemmas~\ref{theta-3} and~\ref{theta-2}. 
\end{proof}

\begin{proof}[Proof of Theorem~\ref{mixed-thm}] This follows from Lemma~\ref{span-boundary} and 
monotonicity. 
\end{proof}

\section{Proof of Theorem~\ref{K-theorem}}\label{K-section}

\begin{lemma}\label{small-lattice-set} Assume that $\theta\in [d+1,2d+1]$.
Suppose $A\subset \bZ^d$ and $|A|<2^{2d+1-\theta}$. Then there is a point in $A$ with at least 
$\theta$ neighbors in $A^c$. 
\end{lemma}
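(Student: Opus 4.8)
The plan is to argue by contradiction and a weighting/counting argument. Suppose every point of $A$ has at most $\theta - 1$ neighbors in $A^c$, equivalently at least $2d - \theta + 1$ neighbors inside $A$. Since $\theta \in [d+1, 2d+1]$ we have $2d - \theta + 1 \in [0, d]$, so this is a genuine constraint only when $\theta < 2d+1$; the boundary case $\theta = 2d+1$ forces $|A| \ge 1 = 2^{2d+1-\theta}$ trivially, so we may assume $\theta \le 2d$. The key structural fact is that if $A$ is a nonempty finite subset of $\bZ^d$ in which every vertex has at least $r := 2d - \theta + 1$ neighbors inside $A$, then $A$ must be "thick" in at least $r$ of the $d$ coordinate directions, in the following sense: projecting $A$ onto any coordinate axis, the set cannot be too small unless $A$ fails the degree condition at its extreme points.

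First I would make this precise via an extremal-point / dimension-reduction argument. Pick a point $x \in A$ whose first coordinate is maximal; then $x + e_1 \notin A$, so $x$ already "spends" one of its $\theta - 1$ allowed outside-neighbors. Iterating over the coordinate directions: among the $d$ directions $e_1, \dots, e_d$, consider for each the set of points of $A$ extremal in the $+$ and in the $-$ direction. A cleaner route is induction on $d$: write $A$ as a union of nonempty "slabs" $A_j = \{x \in A : x_d = j\}$ for $j$ in some interval, say $j \in \{j_{\min}, \dots, j_{\max}\}$ with $s := j_{\max} - j_{\min} + 1 \ge 1$ slabs. A point in the top slab $A_{j_{\max}}$ has its $e_d$-up-neighbor outside $A$, so within its own slab (a copy of $\bZ^{d-1}$) it has at least $2d - \theta + 1 - 1 = 2(d-1) - (\theta - 1) + 1$ neighbors in $A_{j_{\max}}$; that is, $A_{j_{\max}}$ satisfies the hypothesis of the lemma in dimension $d - 1$ with threshold $\theta - 1$ (and $\theta - 1 \in [d, 2d - 1] \subseteq [(d-1)+1, 2(d-1)+1]$ when $\theta \le 2d$, with the edge $\theta - 1 = d$ handled separately since then the induction gives $|A_{j_{\max}}| \ge 1$). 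By induction, $|A_{j_{\max}}| \ge 2^{2(d-1)+1-(\theta-1)} = 2^{2d - \theta}$, and likewise $|A_{j_{\min}}| \ge 2^{2d-\theta}$. If $s \ge 2$ these are distinct slabs and $|A| \ge |A_{j_{\min}}| + |A_{j_{\max}}| \ge 2 \cdot 2^{2d-\theta} = 2^{2d+1-\theta}$, contradicting $|A| < 2^{2d+1-\theta}$. If $s = 1$, then $A$ lies in a single hyperplane $\bZ^{d-1}$, every point of $A$ has both $e_d$-neighbors outside $A$, so within that hyperplane each point has at least $2d - \theta + 1 - 2 = 2(d-1) - (\theta - 2) + 1$ interior neighbors; this is the dimension-$(d-1)$, threshold-$(\theta-2)$ hypothesis, and $\theta - 2 \in [d-1, 2d-2]$, so when $\theta - 2 \ge d$ induction gives $|A| \ge 2^{2(d-1)+1-(\theta-2)} = 2^{2d+1-\theta}$, again a contradiction; when $\theta - 2 = d - 1$, i.e. $\theta = d+1$, we get $|A| \ge 1 = 2^{2d+1-\theta} = 2^d$ only if $d = 0$, so this base-case bookkeeping needs care.

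The base case is $d = 1$: then $\theta \in \{2, 3\}$. For $\theta = 3$ the claim is $|A| < 1 \Rightarrow$ trivially true (empty set has no point, vacuously; but the statement asserts existence of a point, so really we should read it as: if $|A| \ge 1$ then... — here $2^{2d+1-\theta} = 2^0 = 1$, and any nonempty $A \subseteq \bZ$ has an endpoint with both... wait, $\theta = 3 > 2 = $ degree, so every point has $\ge \theta$ outside-neighbors, fine). For $\theta = 2$: $2^{2d+1-\theta} = 2$, and a set $A \subseteq \bZ$ with $|A| = 1$ is a single point with $2 \ge \theta$ outside-neighbors. So the base case holds, and one must track the degenerate sub-cases (single-slab collapses, thresholds dropping to the bottom of their allowed range) carefully as the induction descends.

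The main obstacle I expect is precisely this bookkeeping of edge cases: the induction reduces $(d, \theta)$ either to $(d-1, \theta-1)$ (multi-slab) or, in the single-slab collapse, to $(d-1, \theta-2)$, and one must verify that $\theta - 1$ and $\theta - 2$ stay within the admissible interval $[d, 2d-1]$ resp. $[d-1, 2d-2]$ and that the doubling $2 \cdot 2^{2(d-1)+1-(\theta-1)} = 2^{2d+1-\theta}$ is recovered exactly (not just up to a constant) in every branch. Once the invariant "each of $\theta - d$ 'collapsed' directions costs a factor of $2$ in the bound, and there are at least $\theta - d$ of them because the extremal points eat up the $\theta - 1$ outside-neighbor budget" is set up cleanly, the computation is routine; the risk is an off-by-one in the exponent at the bottom of the recursion.
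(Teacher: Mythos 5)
Your approach --- induction on the dimension by slicing $A$ into hyperplane slabs along the last coordinate --- is the same as the paper's, but your decision to run it as a proof by contradiction forces you to deal with the ``single-slab'' branch, and that is exactly where you hit a genuine gap. You reduce the single-slab case to dimension $d-1$ with threshold $\theta-2$, but when $\theta = d+1$ this gives $\theta - 2 = d - 1 < (d-1)+1$, which is outside the admissible range for the inductive hypothesis, and you correctly observe that the resulting numbers do not match up (you get the lower bound $2^{2d+1-\theta}$ only if $d=0$). You flag this (``this base-case bookkeeping needs care'') but do not resolve it, so as written the induction does not close.

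The missing idea is to \emph{permute coordinates}. If $A$ has at least two points, those points differ in some coordinate; relabel so that this is the last coordinate, and then $\underline{i} < \overline{i}$ automatically, eliminating the single-slab branch altogether. (If $A$ is a single point, it has $2d$ neighbours in $A^c$, and $2d \geq \theta$ whenever $\theta \leq 2d$; the remaining case $\theta = 2d+1$ makes the hypothesis $|A| < 1$ vacuous.) This is precisely the move the paper makes, and with it the $(d-1,\theta-2)$ branch never arises. The paper also avoids contradiction: rather than showing both extreme slabs $A_{j_{\min}}, A_{j_{\max}}$ must be large, it notes these are disjoint subsets of $A$, so one of them --- call it $A'$ --- has fewer than $|A|/2 < 2^{2(d-1)+1-(\theta-1)}$ points. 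Applying the inductive hypothesis (in dimension $d-1$, threshold $\theta-1 \in [(d-1)+1,\,2(d-1)+1]$) to $A'$ yields a point $x\in A'$ with at least $\theta-1$ neighbours in $A^c$ within that hyperplane, and $x$ has one more $A^c$-neighbour in the empty adjacent hyperplane, giving at least $\theta$ in total. This is both shorter and dodges the off-by-one issues at the bottom of the threshold range that you were worried about. Your multi-slab computation is correct; the proof just needs the coordinate-permutation step to eliminate the degenerate branch, and ideally the cleaner ``smaller extreme slab'' argument to avoid contradiction.
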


\begin{proof}
We prove this by induction on $d$. For $d=1$ the claim is obvious. 
Assume now the claim holds for $d-1$. To prove it for $d$, first observe that it is trivially true 
for $\theta=2d+1$, or when $A$ consists of a single point. 
Otherwise, let $\underline i=\min\{i:(\bZ^{d-1}\times \{i\})\cap A\ne \emptyset\}$
and $\overline i=\max\{i:(\bZ^{d-1}\times \{i\})\cap A\ne \emptyset\}$. 
We may, without loss of generality, 
assume that $\underline i<\overline i$ (otherwise we permute the coordinates), and that $A'=(\bZ^{d-1}\times \{\underline i\})\cap A$
has cardinality $|A'|<2^{2d+1-\theta}/2=2^{2(d-1)+1-(\theta-1)}$. As 
$\theta-1\in[d,2d-1]=[(d-1)+1, 2(d-1)+1]$, we may apply the induction hypothesis to find
a point in $A'$ with at least $\theta-1$ neighbors in $(\bZ^{d-1}\times \{\underline i\})\cap A^c$, 
but any point in $A'$ also has a neighbor in  $(\bZ^{d-1}\times \{\underline i-1\})\subset A^c$.
\end{proof} 

\begin{proof}[Proof of Theorem~\ref{K-theorem} when $\theta>d$.]
Assume $\theta\in[d+1, 2d+1]$. 
We call a {\it safe box\/} a set of 
the form $\prod_{i=1}^d[a_i,b_i]\times K_n$, where $b_i=a_i+1$ for 
$2d+1-\theta$ indices $i$ and $b_i=a_i$ for the rest. Every vertex in a safe box has exactly $2d-(2d+1-\theta)=\theta-1$ neighbors outside. Therefore, 
if there exist a completely empty safe box, then spanning is  
impossible. It follows that a.~a.~s.~spanning does not occur when 
$$
(1-p)^{n\cdot 2^{2d+1-\theta}}\gg m^{-d},
$$
and this inequality is satisfied if 
$$
p<(1-\epsilon) \frac{d}{2^{2d+1-\theta}}\cdot \frac{\log m}{n},
$$
for some $\epsilon>0$, implying the lower bound in Theorem~\ref{K-theorem}. Conversely, call a point $z\in \bZ_m^d$ {\it white\/} 
if $\{z\}\times K_n$ contains at most $\theta-1$ initially open points. If
$$
p>(1+\epsilon) \frac{d}{2^{2d+1-\theta}}\cdot \frac{\log m}{n},
$$
then the probability that there exists a white connected set of size $k\ge 1$ is 
bounded above by 
$$
C^k m^d  \left((np)^{\theta-1}e^{-np}\right)^k\le 
C^k m^d(\log m)^{k(\theta-1)}m^{-(1+\epsilon)dk2^{-2d-1+\theta}},
$$
where $C$ denotes two diferent constants dependent on $d$ and $\theta$. Clearly, the 
above expression goes to $0$ 
as $n\to\infty$ provided that $k\ge 2^{2d+1-\theta}$. It follows that 
a.~a.~s.~there is no connected white set of size  at least $2^{2d+1-\theta}$ in $\bZ_m^d$. 
By Lemma~\ref{small-lattice-set}, any connected white set of smaller size has at least one point 
$z_0$ that 
has $\theta$ non-white neighbors and thus the entire line $\{z_0\}\times K_n$ becomes occupied by 
time $2$. Thus the entire $V$ becomes occupied by time $2^{2d+1-\theta}$.
The proof for $\theta>2d+1$ is the same as when $\theta=2d+1$. 
\end{proof}

\begin{proof}[Proof of Theorem~\ref{K-theorem} when $\theta\le d$.]
Write $\lambda=\lambda(d,\theta)$, fix an $\epsilon>0$, and assume $p$ is of the form 
$p=an^{-1}(\log_{(\theta-1)}m)^{-(d-\theta+1)}$. 
Call a point $z\in \bZ_m^d$ {\it grey\/} if the line $\{z\}\times K_n$ contains at least one 
open site, and {\it black\/} if the line $\{z\}\times K_n$ contains at least $\theta$ 
open points. It is a necessary condition for spanning that initially grey points span 
$\bZ_m^d$ under the 
bootstrap percolation process with threshold $\theta$.  By the main result of  
\cite{BBDM}, $\probsub{p}{\text{\Span}}\to 0$ unless
$$
np\ge  1-(1-p)^n\ge 
(1-\epsilon) \left(\frac {\lambda}{\log_{(\theta-1)} m}\right)^{d-\theta+1},
$$
which proves that $\probsub{p}{\text{\Span}}\to 0$ when $a<\lambda^{d-\theta+1}$.
To get the upper bound, observe that 
\begin{equation}\label{K-thm-eq1}
\probsub{p}{\text{$z$ is initially black}}=
\Omega\left(\probsub{p}{\text{$z$ is initially grey}}^\theta\right).
\end{equation}
Next, observe that if a point $z\in \bZ_m^d$ has $\theta-1$ black points in its neighborhood
and an additional grey point (that could be $z$ itself), then $z$ eventually becomes black. 
The local growth of black points can then be constructed in the same way 
as in \cite{BBM}, and due to (\ref{K-thm-eq1}) has the same 
leading order asymptotics as the local growth probability of grey points, as the main contribution 
comes from a lower-dimensional process where, apart from an initial black nucleus, only 
grey points are used. This shows that $\probsub{p}{\text{spanning}}\to 1$ 
when $a>\lambda^{d-\theta+1}$.
\end{proof}

\section*{Acknowledgements}
Janko Gravner was partially supported by the Simons Foundation Award \#281309
and the Republic of Slovenia's Ministry of Science
program P1-285.  David Sivakoff 
was partially supported by NSF CDS\&E-MSS Award \#1418265.

\end{document}